\newcommand{\ouv}{\Omega}
\newcommand{\ouvTM}{\mathcal{O}}
\newcommand{\ouvxiBig}{\mathcal{U}}
\newcommand{\ouvgraph}{U}
\newcommand{\ouvxiProofLem}{V} 
\newcommand{\ouvxiProofLemB}{W} 
\newcommand{\prolto}[2]{{#1}_{#2}}
\newcommand{\CCC}[1]{\ensuremath{\mathbf{C}^{#1}}}
\newcommand\RR{{I\!\!R}}
\renewcommand\emptyset{\varnothing}  
\newtheorem{remark}[theorem]{Remark}
\title{A necessary condition for dynamic equivalence}
\author{Jean-Baptiste Pomet\thanks{ 
 INRIA, B.P. 93, 06902 Sophia Antipolis cedex,
    France. \hfill $\ $
Email:~\texttt{Jean-Baptiste.Pomet@sophia.inria.fr}. \hfill  May 6, 2008, revised \today.  }}
\begin{document}

\pagestyle{myheadings} \thispagestyle{plain} \markboth{J.-B. POMET}{A NECESSARY CONDITION FOR DYNAMIC EQUIVALENCE}
\maketitle
\begin{abstract}
  If two control systems on manifolds of the same dimension are dynamic equivalent, we prove that either they are static
  equivalent --\textit{i.e.}\ equivalent via a classical diffeomorphism-- or they are both ruled;  for systems of different
  dimensions, the one of higher dimension must be ruled.
  A ruled system is one whose equations define at each point in the state manifold, a ruled submanifold of the tangent
  space.
  Dynamic equivalence is also known as equivalence by endogenous dynamic feedback, or by a Lie-Bäcklund transformation when
  control systems are viewed as underdetermined systems of ordinary differential equations; it is very close to absolute equivalence for Pfaffian systems.

  It was already known that a differentially flat system must be ruled; this was a particular case of the present result, in which
  one of the systems was assumed to be ``trivial'' (or linear controllable).
\end{abstract}

\begin{keywords}
  Control systems, ordinary differential equations, underdetermined systems, dynamic equivalence, absolute equivalence, ruled submanifolds.
\end{keywords}
\begin{AMS}
  34C41, 34L30, 93B17, 93B29.
\end{AMS}

\section{Introduction}

We consider time-invariant control systems, or underdetermined systems of ordinary differential equations (ODEs) where the
independent variable is time. 
\emph{Static equivalence} refers to equivalence via a diffeomorphism in the variables of the equation, or in the state and control
variables, with a triangular structure that induces a diffeomorphism (preserving time) in the state variables too. It is also
known as ``feedback equivalence''. 
\emph{Dynamic equivalence} refers to equivalence via invertible transformations in jet spaces that do not induce any
diffeomorphism in a finite number of variables, except when it coincides with static equivalence;
these transformations are also known as endogenous dynamic feedback~\cite{Mart92th,Flie-Lev-Mar-R92cras}, or Lie-Bäcklund
transformations~\cite{Ande-Ibr79,Flie-Lev-Mar-R92cras,Pome95vars}, although this terminology is more common for systems of partial
differential equations (PDEs); dynamic equivalence is also very close to absolute equivalence for Pfaffian
systems~\cite{Cart14abs,Shad90,Shad-Slu94}.

The literature on classification and invariants for static equivalence is too large to be quoted here; 
let us only recall that, as evidenced by all detailed studies and
mentioned in \cite{Tcho87}, each equivalence class (within control systems on the same manifold, or germs of control systems) is very very thin, indeed it has infinite co-dimension except in trivial cases. 
Since dynamic equivalence is a priori more general, it is natural to ask how more general it is.
Systems on manifolds of different dimension may be dynamic equivalent, but not static equivalent. 
Restricting our attention to systems on the same manifold and considering dynamic equivalence instead of static, how bigger are the equivalence classes~?

The literature on dynamic feedback linearization~\cite{Isid-Moo-DeL86,Char-Lev-Mar91}, differential
flatness~\cite{Flie-Lev-Mar-R92cras,Mart92th}, or absolute equivalence~\cite{Shad90} tends to describe the classes containing
linear controllable systems or ``trivial'' systems.
The authors of \cite{Flie-Lev-Mar-R92cras,Mart92th,Shad90} made the link with deep differential geometric questions dating back
to~\cite{Gour05,Cart14abs,Hilb12}; see \cite{Avan05th} for a recent overview. Despite these efforts, no
characterization is available except for systems with one control, \textit{i.e.}\ 
whose general solution depends on one function of one variable; there are many systems that one suspects to be non-flat
--\textit{i.e.} dynamic equivalent to no trivial system-- while no proof is available, see the remark on (\ref{eq:exx2}) in Section~\ref{sec-flat}.
There is however one powerful necessary condition \cite{Rouc94,Slui93}: a flat
system must be ruled, \textit{i.e.}\ its equations must define a ruled submanifold in each tangent space.
As pointed out in \cite{Rouc94}, this proves that the equivalence class of linear systems for dynamic equivalence, although bigger
than for static equivalence, still has infinite co-dimension.

Deciding whether two general systems are dynamic equivalent is at least as difficult. There is no method to prove
that two systems are not dynamic equivalent.  
The contribution of this paper is a necessary condition for two systems to be dynamic equivalent, that generalizes
\cite{Rouc94,Slui93}: if they live on manifolds of the same dimension, they must be either both ruled or static equivalent; if
not, the one of higher dimension must be ruled. 
Besides being useful to prove that some pairs of systems are not dynamic equivalent, it also implies that
``generic'' equivalence classes for dynamic equivalence are the same as for static equivalence.

\paragraph{Outline}
Notations on jet bundles and differential operators are recalled in Section~\ref{sec-notations}; the notions of systems,
ruled systems, dynamic and static equivalence are precisely defined in Section~\ref{sec-defs}.
Our main result is stated and commented in Section~\ref{sec-mainth}, and proved in Section~\ref{sec-proofs}.

\section{Miscellaneous notations}
\label{sec-notations}
Let $M$ be an $n$-dimensional manifold, either \CCC{\infty} (infinitely differentiable) or \CCC{\omega} (real analytic).
\subsection{Jet bundles}
Using the notations and definitions of \cite[Chapter II, \S2]{Golu-Gui73}, $J^k(\RR,M)$ denotes the $k$\textsuperscript{th} jet
bundle of maps $\RR\to{M}$.  It is a bundle both over $\RR$ and over $M$.  If $(x^1,\ldots,x^n)$ is a system of coordinates on an
open subset of $M$, coordinates on the lift of this open subset are given by $t,x^1,\ldots,x^n,\dot{x}^1,\ldots,\dot
x^n,\,\cdots\,$, $(x^1)^{(k)},\ldots,(x^n)^{(k)}$ where $t$ is the projection on $\RR$.

As an additive group, $\RR$ acts on $J^k(\RR,M)$ by translation of the $t$-component; the quotient by this action is well defined
and we denote it by
\begin{equation}
  \label{eq:Jk}
  J^k(M)\ =\ \left.J^k(\RR,M)\,\right/\,\RR\ .
\end{equation}
Since we only study time-invariant systems, we prefer to work with $J^k(M)$.  Quotienting indeed drops the $t$ information: local
coordinates on $J^k(M)$ are given by $x^1,\ldots,x^n,\dot{x}^1,\ldots,\dot x^n,\,\cdots\,,(x^1)^{(k)},\ldots,(x^n)^{(k)}$; for
short, we write $x,\dot{x},\ldots,x^{(k)}$.  For $\ell<k$, there is a canonical projection
\begin{equation}
  \label{eq:pikl}
  \pi_{k,\ell}:\;J^k(M)\to J^\ell(M)
\end{equation}
that makes $J^k(M)$ a bundle over $J^\ell(M)$; in particular it is a bundle over $M=J^0(M)$ and over $\mathrm{T}M=J^1(M)$. In
coordinates,
\begin{displaymath}
  \pi_{k,\ell}(x,\dot{x},\ldots,x^{(\ell)},\ldots,x^{(k)})=(x,\dot{x},\ldots,x^{(\ell)})\ .
\end{displaymath}

\paragraph*{Notation}
To a subset $\ouv\subset J^k(M)$, we associate, for all $\ell$, a subset $\ouv_\ell\subset J^\ell(M)$ in the following manner (obviously, $\ouv_k=\ouv$):
\begin{equation}
  \label{eq:0zero1}
  \ouv_\ell=\left\{
    \begin{array}{ll}
      \pi_{k,\ell}(\ouv)&\mbox{if }\ell\leq k,\\{\pi_{\ell,k}}^{-1}(\ouv)&\mbox{if }\ell\geq k.
    \end{array}
  \right.
\end{equation}

\subsection{\boldmath The $k$\textsuperscript{th} jet of a smooth (\CCC{\infty}) map $x(.):I\to M$}
\label{sec-kthjet}
With $I\subset\RR$ a time interval, it is a smooth map
$j^k(\,x(.)\,):\,I\to J^k(M)$
(see again \cite{Golu-Gui73}); in
coordinates,
\begin{displaymath}
  j^k(\,x(.)\,)\,(t)\ =\ (x(t),\dot{x}(t),\ddot{x}(t),\ldots,x^{(k)}(t))\ .
\end{displaymath}

By \emph{a smooth map whose $k$\textsuperscript{th} jet remains in $\ouv$}, for some $\ouv\subset J^k(M)$, we mean a smooth
$x(.):I\to M$ such that $j^k(\,x(.)\,)(t)\in \ouv$ for all $t$ in $I$.

\subsection{Differential operators}
\label{sec-diffop}
If $\ouv$ is an open subset of $J^k(M)$, and $M'$ is a manifold of dimension $n'$, a smooth (\CCC{\infty} or \CCC{\omega}) map
$\Phi:\,\ouv\to M'$
defines the smooth differential operator of order\footnote{
  ``Of order no larger than $k$'' would be more accurate: if $\Phi$ does not depend on $k$\textsuperscript{th} derivatives, the
  order in the usual sense would be smaller than $k$. See for instance $\Psi$ in example (\ref{eq:exx21-Psi}).} 
$k$ \vspace{-0.6em}
\begin{equation}
  \label{eq:Dphi}
  \mathcal{D}_\Phi^k\ =\ \Phi\circ j^k\ .
\end{equation}
Obviously, $\mathcal{D}_\Phi^k$ sends smooth maps $I\to M$ whose $k$\textsuperscript{th} jet remains in $\ouv$ to smooth maps
$I\to M'$.  In coordinates, the image of $t\mapsto x(t)$ is $t\mapsto\Phi(x(t),\dot{x}(t),\ddot{x}(t),\ldots,x^{(k)}(t))$. Note
that we do not require that $k$ be minimal, so $\Phi$ \emph{might} not depend on $x^{(k)}$

We call $j^r\circ\mathcal{D}_\Phi^k$
the \emph{$r$\textsuperscript{th} prolongation of} the differential operator $\mathcal{D}_\Phi^k$; it sends smooth maps $I\to M$
whose $k$\textsuperscript{th} jet remains in $\ouv$ to smooth maps $I\to J^r(M')$; it is indeed  
the differential operator $\mathcal{D}_{\Phi^{[r]}}^{k+r}$, of order $k+r$,
with $\Phi^{[r]}$ the unique smooth map ${\pi_{k+r,k}}^{-1}(\ouv)\to J^r(M')$ such that
\begin{equation}
  \label{eq:Phiprol}
  j^r\circ\Phi\circ j^k\  =\ \Phi^{[r]}\circ j^{k+r}\ .
\end{equation}
We call $\Phi^{[r]}$ the $r$\textsuperscript{th} prolongation of $\Phi$.  One has
$\pi_{r,0}\circ\Phi^{[r]}=\Phi\circ\pi_{k+r,k}$ and more generally, for $s<r$,
\begin{equation}
  \label{eq:piPhi}
  \pi_{r,s}\circ\Phi^{[r]}=\Phi^{[s]}\circ\pi_{k+r,k+s}\ .
\end{equation}

\section{Systems and equivalence}
\label{sec-defs}

\subsection{Systems}
\label{sec-sys}
\begin{definition}
  \label{def-sys}
  A \CCC{\infty} or \CCC{\omega} \emph{regular system} with $m$ controls on a smooth manifold $M$ is a \CCC{\infty} or \CCC{\omega} sub-bundle $\Sigma$ of the tangent bundle
  $\mathrm{T}M$
  \begin{equation}
    \label{diagcom}
    \begin{array}{ccc}
      \Sigma & \stackrel{i}\hookrightarrow & \mathrm{T}M
      \\
      & \!\!\pi\!\searrow & \downarrow
      \\
      & & M
    \end{array}
  \end{equation}
  with fiber $\Upsilon$, a \CCC{\infty} or \CCC{\omega} manifold of dimension $m$ (\textit{e.g.} an open subset of $\RR^m$). The
  \emph{velocity set} at a point $x\in M$ is the fiber $\Sigma_x=\pi^{-1}(\{x\})$, a submanifold of $\mathrm{T}_x M$ diffeomorphic
  to $\Upsilon$. 
\end{definition}

\begin{definition}[Solutions of a system]
  \label{def-sol}
  A solution of system $\Sigma$ on the real interval $I$ is a \emph{smooth} (\CCC{\infty}) $x(.):I\to M$ such that $j^1(x(.))(t)\in\Sigma$ for
  all $t\in I$.
\end{definition}

Although a general solution of a system need not be smooth, we only consider \emph{smooth} solutions. They form a rich enough
class in the sense that systems are fully characterized by their set of smooth solutions.

\smallskip

Locally, one may write ``explicit'' equations of $\Sigma$ in the following form.  Of course there are many choices of coordinates
and the map $f$ depends on this choice.
\begin{proposition}
  \label{lem-locexpl}
  For each $\xi\in\Sigma$, with $\Sigma\hookrightarrow\mathrm{T}M$ a regular system (\ref{diagcom}), there is
  \begin{itemize}
  \item an open neighborhood $\ouvxiBig$ of $\xi$ in $\mathrm{T}M$, $\ouvxiBig_0$ its projection on $M$,
  \item a system of local coordinates $(x_{\mathrm{I}},x_{\mathrm{I\!I}})$ on $\ouvxiBig_0$, with $x_{\mathrm{I}}$ a block of
    dimension $n-m$ and $x_{\mathrm{I\!I}}$ of dimension $m$,
  \item an open subset $\ouvgraph$ of $\RR^{n+m}$ and a smooth (\CCC{\infty} or \CCC{\omega}) map $f:\ouvgraph\to\RR^{n-m}$,
  \end{itemize}
  such that the equation of $\Sigma\cap\ouvxiBig$ in these coordinates is
  \begin{equation}
    \label{eq:eqSigma}
    \dot{x}_{\mathrm{I}}=f(x_{\mathrm{I}},x_{\mathrm{I\!I}},\dot{x}_{\mathrm{I\!I}})\,,\ \ 
    (x_{\mathrm{I}},x_{\mathrm{I\!I}},\dot{x}_{\mathrm{I\!I}})\in\ouvgraph\ .\vspace{-1em}
  \end{equation}
\end{proposition}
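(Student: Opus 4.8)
The plan is to combine a local trivialisation of the sub-bundle $\Sigma$ with the implicit function theorem. Fix $\xi\in\Sigma$ and let $x_0=\pi(\xi)$. Choose any local coordinates $x=(x^1,\dots,x^n)$ on a neighbourhood $\ouvxiBig_0$ of $x_0$ in $M$; they induce coordinates $(x,\dot{x})$ on $\mathrm{T}M$ over $\ouvxiBig_0$. Since $\Sigma$ is a \CCC{\infty} or \CCC{\omega} sub-bundle with fibre $\Upsilon$, after shrinking $\ouvxiBig_0$ there is a \CCC{\infty} or \CCC{\omega} trivialisation $\tau:\ouvxiBig_0\times\Upsilon\to\Sigma$ with $\pi\circ\tau=\mathrm{pr}_1$; composing with the inclusion, $\sigma=i\circ\tau$ is an embedding onto the part of $\Sigma$ lying over $\ouvxiBig_0$, and commutativity of (\ref{diagcom}) forces $\sigma(x,u)=(x,g(x,u))$ for a smooth map $g:\ouvxiBig_0\times\Upsilon\to\RR^n$ (read in the chosen coordinates, after replacing $\Upsilon$ by a coordinate chart around the point $u_0$ with $\sigma(x_0,u_0)=\xi$). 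For each fixed $x$ the map $u\mapsto\sigma(x,u)$ is an immersion, so the $n\times m$ matrix $\partial g/\partial u$ has rank $m$ at $(x_0,u_0)$.

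Next I would split the coordinates. As $\partial g/\partial u(x_0,u_0)$ has rank $m$, some $m$ of its rows are linearly independent; relabelling $x^1,\dots,x^n$ once, I may assume these are the last $m$ indices, and write $x=(x_{\mathrm{I}},x_{\mathrm{I\!I}})$ with $x_{\mathrm{I\!I}}$ of dimension $m$ and, correspondingly, $g=(g_{\mathrm{I}},g_{\mathrm{I\!I}})$. Then $\partial g_{\mathrm{I\!I}}/\partial u$ is an invertible $m\times m$ matrix at $(x_0,u_0)$, and the implicit function theorem produces a \CCC{\infty} or \CCC{\omega} map $u=h(x,\dot{x}_{\mathrm{I\!I}})$, defined for $(x,\dot{x}_{\mathrm{I\!I}})$ near the corresponding coordinates of $\xi$, that inverts $\dot{x}_{\mathrm{I\!I}}=g_{\mathrm{I\!I}}(x,u)$ near $u=u_0$. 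Setting $f(x_{\mathrm{I}},x_{\mathrm{I\!I}},\dot{x}_{\mathrm{I\!I}})=g_{\mathrm{I}}\bigl(x,h(x,\dot{x}_{\mathrm{I\!I}})\bigr)$ on a suitable open set $\ouvgraph\subset\RR^{n-m}\times\RR^m\times\RR^m=\RR^{n+m}$ gives the candidate map of (\ref{eq:eqSigma}), with the required regularity.

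Finally I would check that, on a small enough open neighbourhood $\ouvxiBig$ of $\xi$ in $\mathrm{T}M$, the set $\Sigma\cap\ouvxiBig$ coincides with the graph $\{\,\dot{x}_{\mathrm{I}}=f(x_{\mathrm{I}},x_{\mathrm{I\!I}},\dot{x}_{\mathrm{I\!I}})\,\}$. Because $\sigma$ is an embedding, $\sigma(\ouvxiBig_0\times C)$ is open in $\Sigma$, where $C$ is the neighbourhood of $u_0$ on which $h$ is the inverse of $g_{\mathrm{I\!I}}(x,\cdot\,)$; I shrink $\ouvxiBig$ so that $\Sigma\cap\ouvxiBig\subset\sigma(\ouvxiBig_0\times C)$ and so that $(x,\dot{x}_{\mathrm{I\!I}})$ stays in the domain of $h$ throughout $\ouvxiBig$. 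Then for $(x,\dot{x})\in\Sigma\cap\ouvxiBig$ we have $\dot{x}=g(x,u)$ with $u\in C$, hence $u=h(x,\dot{x}_{\mathrm{I\!I}})$ and $\dot{x}_{\mathrm{I}}=g_{\mathrm{I}}(x,u)=f(x_{\mathrm{I}},x_{\mathrm{I\!I}},\dot{x}_{\mathrm{I\!I}})$; conversely, if $(x,\dot{x})\in\ouvxiBig$ satisfies that equation, put $u=h(x,\dot{x}_{\mathrm{I\!I}})$, so $g_{\mathrm{I\!I}}(x,u)=\dot{x}_{\mathrm{I\!I}}$ and $g_{\mathrm{I}}(x,u)=\dot{x}_{\mathrm{I}}$, i.e.\ $(x,\dot{x})=\sigma(x,u)\in\Sigma$. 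I do not expect a serious obstacle: the real content is the implicit function theorem applied to $g_{\mathrm{I\!I}}$, which is submersive in $u$ precisely because $\Sigma$ is a sub-bundle (this is the point that would be awkward to obtain if one started instead from defining equations $F(x,\dot{x})=0$ of $\Sigma$); the only care needed is in nesting the various neighbourhoods, which uses that $\sigma$ is a homeomorphism onto its image.
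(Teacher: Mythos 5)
Your argument is correct and is exactly the route the paper intends: its own proof consists of the single line ``Consequence of the implicit function theorem,'' and your write-up supplies the standard details (local trivialisation of the sub-bundle, rank-$m$ condition on $\partial g/\partial u$, splitting of coordinates, inversion of $g_{\mathrm{I\!I}}$ in $u$, and the two-way verification of the graph description). No gaps; the care you take with nesting the neighbourhoods is appropriate but routine.
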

\begin{proof} 
  Consequence of the implicit function theorem.
\end{proof}

\paragraph{Control systems} A more usual representation of a system with $m$ controls is
\begin{equation}
  \label{eq:sysexplseul}
  \dot{x}=F(x,u)\;,\ \ \ x\in M\,,\ u\in\mathcal{B}
  \,,
\end{equation}
with $\mathcal{B}$ an open subset of $\RR^m$ and $F:M\times\mathcal{B}\to\mathrm{T}M$ smooth enough.
It can be brought locally, in block coordinates $(x_{\mathrm{I}},x_{\mathrm{I\!I}})$, to the form
\begin{equation}
\label{rev1}
  \dot{x}_{\mathrm{I}}=f(x_{\mathrm{I}},x_{\mathrm{I\!I}},u)\;,\ \ \dot{x}_{\mathrm{I\!I}}=u
\end{equation}
modulo a static feedback on $u$, at least around nonsingular points $(x,u)$ where
\begin{equation}
  \label{eq:rankexplseul}
  \rank\frac{\partial F}{\partial u}(x,u)=m\ .
\end{equation}
Equation (\ref{eq:eqSigma}) can be obtained by eliminating the control $u$ in (\ref{rev1}).

If (\ref{eq:rankexplseul}) holds, (\ref{eq:sysexplseul}) defines a system in the sense of Definition~\ref{def-sys}. 
All results on systems in that sense may easily be translated to control systems (\ref{eq:sysexplseul}).

\paragraph{Implicit systems of ODEs} A smooth \emph{system of $n-m$ ODEs} on $M$:
$R(x,\dot{x})=0$
with $R:\mathrm{T}M\to\RR^{n-m}$ also defines a system in the sense of Definition~\ref{def-sys} if it is nonsingular,
\textit{i.e.}
$\rank\frac{\partial R}{\partial\dot{x}}(x,\dot{x})=n-m$.

\paragraph{Singularities} 
With the above rank assumptions, or the one that $\Sigma$ is a sub-bundle in Definition~\ref{def-sys}, we carefully avoid singular
systems. This paper does not apply to singular control systems or singular implicit systems of ODEs.

\subsubsection*{Prolongations of $\Sigma$}
\label{sec-prolSigma}

For integers $k\geq 1$, we denote by $\prolto{\Sigma}{k}$ the prolongation of the system $\Sigma$ to $k^{th}$ order; it is
\emph{the} subbundle $\prolto{\Sigma}{k}\hookrightarrow J^k(M)$ with the following property: for any smooth map $x(.):I\to M$, 
with $j^k(x(.))$ defined in section~\ref{sec-kthjet},
\begin{equation}
  \label{eq:f(j)}
  j^1(x(.))(t)\in\Sigma\,,\ t\in I\ \ \ \ \Leftrightarrow\ \ \ \ 
  j^k(x(.))(t)\in\prolto{\Sigma}{k}\,,\ t\in I\,.
\end{equation}
The left-hand side means that $x(.)$ is a \emph{solution} of
$\Sigma$ according to Definition~\ref{def-sol}.  Obviously, $\prolto{\Sigma}{1}=\Sigma$.  We may describe $\prolto{\Sigma}{k}$ in coordinates.
\begin{proposition}
  \label{prop-prol}
  Let $K$ be a positive integer.  There is a unique sub-bundle $\prolto{\Sigma}{K}\hookrightarrow J^K(M)$ such that:
\vspace{-0.4em}
  \begin{equation}
    \label{eq:proleq}
    \begin{array}{l}
      \mbox{a smooth map $x(.):I\to M$ is
        a solution of system $\Sigma$ on the real interval $I$}\\\mbox{if and only if $j^K(x(.))(t)\in\prolto{\Sigma}{K}$ for all $t\in I$.}
    \end{array}
  \end{equation}
  For all $\xi\in\prolto{\Sigma}{K}$, its projection $\xi_1=\pi_{K,1}(\xi)$ is in $\Sigma$ and, with $\ouvxiBig$ the neighborhood
  of $\xi_1$, $(x_{\mathrm{I}},x_{\mathrm{I\!I}})$ the coordinates on $\ouvxiBig_0$, $\ouvgraph$ the open subset $\RR^{n+m}$ and
  $f:\ouvgraph\to\RR^m$ the map given by Proposition~\ref{lem-locexpl}, 
  the equations of $\ \ouvxiBig_K\cap\prolto{\Sigma}{K}\ $ in $J^K(M)$ are, in the coordinates
  $(x_{\mathrm{I}},x_{\mathrm{I\!I}},\dot{x}_{\mathrm{I}},\dot{x}_{\mathrm{I\!I}},\ldots,x^{(K)}_{\mathrm{I}},x^{(K)}_{\mathrm{I\!I}})$
  induced on $\ouvxiBig$ by $(x_{\mathrm{I}},x_{\mathrm{I\!I}})$,
  \begin{equation}
    \label{eq:eqprolong}
    \begin{array}{l}
      x_{\mathrm{I}}^{(i)}=f^{(i-1)}(x_{\mathrm{I}},x_{\mathrm{I\!I}},\dot{x}_{\mathrm{I\!I}},\ldots,x_{\mathrm{I\!I}}^{(i)})\,,\ \ \
      1\leq i\leq K\,,
      \\
      (x_{\mathrm{I}},x_{\mathrm{I\!I}},\dot{x}_{\mathrm{I\!I}},\ldots,x_{\mathrm{I\!I}}^{(K)})\;\in\;\ouvgraph\times\RR^{(K-1)m}\,,
    \end{array}
  \end{equation}
  where, for a smooth map $f:\ouvgraph\to\RR^{n-m}$, and $\ell\geq0$, $f^{(\ell)}$ is the smooth map
  $\ouvgraph\times\RR^{Km}\to\RR^{n-m}$ defined by $f^{(0)}=f$ and, for $i\geq1$,
  \begin{equation}
    \label{eq:eqprolf}
    f^{(i)}(x_{\mathrm{I}},x_{\mathrm{I\!I}},\dot{x}_{\mathrm{I\!I}},\ldots,x_{\mathrm{I\!I}}^{(i+1)})= \frac{\partial
      f^{(i-1)}}{\partial x_{\mathrm{I}}} f(x_{\mathrm{I}},x_{\mathrm{I\!I}},\dot{x}_{\mathrm{I\!I}})\,+ \sum_{i=0}^{i} \frac{\partial
      f^{(i-1)}}{\partial x_{\mathrm{I\!I}}^{(i)}} x_{\mathrm{I\!I}}^{(i+1)}\ .
  \end{equation}
\end{proposition}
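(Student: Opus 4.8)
The plan is to construct $\prolto{\Sigma}{K}$ explicitly in the local coordinates furnished by Proposition~\ref{lem-locexpl}, to recognise the resulting set intrinsically as the collection of all $K$th jets of solutions of $\Sigma$, and to read off from that identification both that the local pieces patch into a genuine sub-bundle of $J^K(M)$ and that it is the unique one with property~\eqref{eq:proleq}. Concretely, I would start from a point of $\Sigma$, invoke Proposition~\ref{lem-locexpl} to get a chart in which the equation of $\Sigma$ is $\dot x_{\mathrm{I}}=f(x_{\mathrm{I}},x_{\mathrm{II}},\dot x_{\mathrm{II}})$, define $f^{(\ell)}$ by the recursion~\eqref{eq:eqprolf}, and verify the one computational fact underlying everything: substituting a curve $t\mapsto x(t)$ into $f^{(\ell)}$ returns the $\ell$th time-derivative of $t\mapsto f(x_{\mathrm{I}}(t),x_{\mathrm{II}}(t),\dot x_{\mathrm{II}}(t))$ whenever $\dot x_{\mathrm{I}}=f$ holds along the curve --- in other words $f^{(\ell)}$ is the $\ell$-fold total derivative along $\Sigma$, the derivatives of $x_{\mathrm{I}}$ produced by the chain rule being eliminated via $\dot x_{\mathrm{I}}=f$ (this is what the term $\frac{\partial f^{(i-1)}}{\partial x_{\mathrm{I}}}f$ in~\eqref{eq:eqprolf} does). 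I then define $\prolto{\Sigma}{K}$, over this chart, to be the subset of $J^K(M)$ cut out by~\eqref{eq:eqprolong}; since these equations express each $x^{(i)}_{\mathrm{I}}$, $1\le i\le K$, as a smooth function of the remaining coordinates, which range freely over $\ouvgraph\times\RR^{(K-1)m}$, it is an embedded \CCC{\infty} (resp.\ \CCC{\omega}) submanifold, and it is a sub-bundle of $J^K(M)\to J^{K-1}(M)$ over $\pi_{K,K-1}(\prolto{\Sigma}{K})$ with $m$-dimensional fibres parametrised by $x^{(K)}_{\mathrm{II}}$, consistently with $\prolto{\Sigma}{1}=\Sigma$; and since its order-one part is exactly the equation of $\Sigma$, one gets $\pi_{K,1}(\prolto{\Sigma}{K})\subseteq\Sigma$, proving the assertion $\xi_1\in\Sigma$.

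Next I would prove the characterisation: a point $\xi$ of the chart lies in $\prolto{\Sigma}{K}$ if and only if there is a solution $x(.)$ of $\Sigma$, defined on a neighbourhood of $0$, with $j^K(x(.))(0)=\xi$. One direction is immediate: differentiating $\dot x_{\mathrm{I}}(t)=f(\cdots)$ successively and using the description of $f^{(i-1)}$ above gives $x^{(i)}_{\mathrm{I}}(t)=f^{(i-1)}(\cdots)$ for $1\le i\le K$, so $j^K(x(.))(t)$ satisfies~\eqref{eq:eqprolong}. For the converse, given $\xi$ satisfying~\eqref{eq:eqprolong}, with free coordinates $\bar x_{\mathrm{I}},\bar x_{\mathrm{II}},\bar x^{(1)}_{\mathrm{II}},\dots,\bar x^{(K)}_{\mathrm{II}}$ (the $\bar x^{(i)}_{\mathrm{I}}$ being then determined), I would take $x_{\mathrm{II}}(t):=\sum_{i=0}^{K}\bar x^{(i)}_{\mathrm{II}}\,t^{i}/i!$ and let $x_{\mathrm{I}}(.)$ solve the non-autonomous ODE $\dot x_{\mathrm{I}}=f(x_{\mathrm{I}},x_{\mathrm{II}}(t),\dot x_{\mathrm{II}}(t))$, $x_{\mathrm{I}}(0)=\bar x_{\mathrm{I}}$; this solution exists on some interval around $0$ and stays in $\ouvgraph$ for $|t|$ small, by the Cauchy--Lipschitz theorem (Cauchy--Kovalevskaya in the \CCC{\omega} case, where the solution is moreover analytic). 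Then $x(.)=(x_{\mathrm{I}}(.),x_{\mathrm{II}}(.))$ is a solution of $\Sigma$, and differentiating as before gives $x^{(i)}_{\mathrm{I}}(0)=f^{(i-1)}(\bar x_{\mathrm{I}},\dots,\bar x^{(i)}_{\mathrm{II}})=\bar x^{(i)}_{\mathrm{I}}$ while $x^{(i)}_{\mathrm{II}}(0)=\bar x^{(i)}_{\mathrm{II}}$ by construction, hence $j^K(x(.))(0)=\xi$.

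From this the global conclusions are formal. The characterisation exhibits $\prolto{\Sigma}{K}$, over each such chart, as the set $\{\,j^K(x(.))(t)\ :\ x(.)\text{ a solution of }\Sigma,\ t\ \text{in its domain}\,\}$, a description that does not mention the chosen coordinates; hence the local definitions agree on overlaps and assemble into a well-defined sub-bundle $\prolto{\Sigma}{K}\hookrightarrow J^K(M)$. Property~\eqref{eq:proleq} then follows: if $x(.)$ is a solution, applying the ``only if'' direction to the time-translated curves $x(t+\cdot)$ shows $j^K(x(.))(t)\in\prolto{\Sigma}{K}$ for all $t$; conversely, if $j^K(x(.))(t)\in\prolto{\Sigma}{K}$ for all $t$, then $j^1(x(.))(t)=\pi_{K,1}(j^K(x(.))(t))\in\pi_{K,1}(\prolto{\Sigma}{K})=\Sigma$, so $x(.)$ is a solution in the sense of Definition~\ref{def-sol}. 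For uniqueness, any sub-bundle $\Sigma'$ of $J^K(M)$ satisfying~\eqref{eq:proleq} must, by its ``only if'' part applied to every solution, contain every $j^K(x(.))(t)$, hence $\prolto{\Sigma}{K}\subseteq\Sigma'$; comparing fibre dimensions --- $\prolto{\Sigma}{K}$ already uses up the $m$ fibre dimensions available over $\prolto{\Sigma}{K-1}$, equivalently the $mK$ available over $M$, and its fibres are connected --- forces $\Sigma'=\prolto{\Sigma}{K}$.

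The proposition is essentially bookkeeping, so I do not expect a serious obstacle. The two points that deserve care are, first, checking that the recursion~\eqref{eq:eqprolf} is genuinely iterated total differentiation along $\Sigma$ (the dummy summation index there is of course to be read as distinct from $i$, and the term $\frac{\partial f^{(i-1)}}{\partial x_{\mathrm{I}}}f$ is exactly the contribution of $\dot x_{\mathrm{I}}=f$); and, second, the Cauchy--Lipschitz step, which is what forces the explicit set~\eqref{eq:eqprolong} to coincide with the intrinsic set of $K$-jets of solutions and thereby makes $\prolto{\Sigma}{K}$ unique. One should also be explicit about what ``sub-bundle of $J^K(M)$'' means in the uniqueness statement: it is a sub-bundle over $\prolto{\Sigma}{K-1}$ (equivalently over $M$, of fibre dimension $mK$), the normalisation being $\prolto{\Sigma}{1}=\Sigma$; without such a constraint on the fibre dimension uniqueness would fail --- for $\dot x_1=0$ on $\RR^2$, both $\{\dot x_1=0,\ \ddot x_1=0\}$ and $\{\dot x_1=0\}$ inside $J^2(\RR^2)$ are sub-bundles satisfying~\eqref{eq:proleq}, only the first being the prolongation.
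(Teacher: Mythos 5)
Your proposal is correct and follows exactly the route the paper intends: its proof of this proposition is the single line ``This is classical, and obvious in coordinates,'' and what you write out (the total-derivative recursion, the Cauchy--Lipschitz construction of a solution through a prescribed $K$-jet, gluing via the coordinate-free description, and uniqueness) is precisely that coordinate argument made explicit. Your remark that uniqueness requires fixing the fibre dimension (normalised by $\prolto{\Sigma}{1}=\Sigma$) is a valid clarification of a point the paper leaves implicit, not a deviation from its approach.
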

\begin{proof}
  This is classical, and obvious in coordinates.
\end{proof}

\begin{remark}
  \label{rem-prol}
  {\upshape Each $\Sigma_{k+1}$ ($k\geq1$) is an affine bundle over $\Sigma_{k}$, and may be viewed as an affine
    sub-bundle of $\mathrm{T}\Sigma_k$, \textit{i.e.} it is a system in the sense of Section~\ref{sec-sys} on the manifold
    $\Sigma_{k}$ instead of $M$.

    In particular $\Sigma_2\hookrightarrow\mathrm{T}\Sigma$ is the system obtained by ``adding an integrator in each control'' of
    the system $\Sigma\hookrightarrow\mathrm{T}M$. It is an affine system (i.e. affine sub-bundle) even when $\Sigma$ is not.   }
\end{remark}

\subsection{Ruled systems}
Recall that a smooth submanifold of an affine space is ruled if and only if it is a union of straight lines,
i.e. if through each point of the submanifold passes a straight line contained in the submanifold. Such a manifold must be
unbounded; since we want to consider the intersection of a submanifold with an
arbitrary open set and allow this patch to be ``ruled'', we use the same slightly abusive notion as \cite{Land99}: a submanifold
$N$ is ruled if and only if, through each point of it, passes a straight line which is contained in $N$ ``until it reaches the
boundary of $N$''. Here, the boundary of the submanifold $N$ is $\partial N=\overline{N}\setminus N$.

A system will be called ruled if and only if $\Sigma_x$ is, for all $x$, a ruled submanifold of $\mathrm{T}_xM$. This is formalized below
in a self-contained manner.
\begin{definition}
  \label{def-ruled}
  Let $\ouvTM$ be an open subset of $\mathrm{T}M$. System $\Sigma$ (see (\ref{diagcom})) is \emph{ruled in $\ouvTM$} if and only
  if, for all $(x,\dot{x})\in\left(\ouvTM\cap\Sigma\right)$, there is a nonzero vector $w\in\mathrm{T}_x M\setminus\{0\}$ and two
  possibly infinite numbers $\lambda^-\in[-\infty,0 
  )$ and $\lambda^+\in(0,+\infty 
  ]$ such that \\$(x,\dot{x}+\lambda w)\in\ouvTM\cap\Sigma$ for all $\lambda$, $\lambda^-<\lambda<\lambda^+$ and
  \begin{equation}
    \label{eq:intmax}
    \begin{array}{l}
      \lambda^->-\infty \Rightarrow 
      (x,\dot{x}+\lambda^- w)\in\partial\left(\ouvTM\cap\Sigma\right) 
      \,,\\
      \lambda^+<+\infty \Rightarrow 
      (x,\dot{x}+\lambda^+ w)\in\partial\left(\ouvTM\cap\Sigma\right) 
      \,.
    \end{array}
  \end{equation}
Recall that, by definition, $\partial\left(\ouvTM\cap\Sigma\right)=\overline{\ouvTM\cap\Sigma}\,\setminus\left(\ouvTM\cap\Sigma\right)$.
\end{definition}

\medskip

We shall need the following characterisation.
\begin{proposition}[\cite{Land99}]
  \label{prop-landsberg}
  Let $\ouvTM$ be an open subset of $\mathrm{T}M$.  $\Sigma$ is ruled in $\ouvTM$ if and only if, for all $\xi=(x,\dot{x})$ in
  $\Sigma\cap\ouvTM$, there is a straight line in $\mathrm{T}_x M$ passing through $\dot{x}$ that has contact of infinite order
  with $\Sigma_x$ at $\dot{x}$.
\end{proposition}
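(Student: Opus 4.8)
The plan is to reduce the equivalence to a one‑dimensional statement about real‑analytic functions and then apply the identity theorem. Both conditions are imposed at each point $\xi=(x,\dot{x})$ of $\Sigma\cap\ouvTM$ separately, so I fix such a $\xi$ and work near it. By Proposition~\ref{lem-locexpl} there are coordinates $(x_{\mathrm{I}},x_{\mathrm{I\!I}})$ on a neighbourhood of $x$ in which $\Sigma$ reads $\dot{x}_{\mathrm{I}}=f(x_{\mathrm{I}},x_{\mathrm{I\!I}},\dot{x}_{\mathrm{I\!I}})$ with $f$ of class \CCC{\omega}; the coordinates $(\dot{x}_{\mathrm{I}},\dot{x}_{\mathrm{I\!I}})$ induced on $\mathrm{T}_xM$ are linear, hence respect its affine structure, and in them the velocity set is the graph $\Sigma_x=\{(\phi(v),v):v\in W\}$ with $\phi(v)=f(x_{\mathrm{I}},x_{\mathrm{I\!I}},v)$, $W\subset\RR^m$ open, $\phi$ of class \CCC{\omega}; write $\dot{x}=(\phi(v_0),v_0)$.

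I would then rewrite both sides of the Proposition as conditions on $\phi$. A straight line through $\dot{x}$ can have contact of order $\geq1$ with the graph $\Sigma_x$ only if its direction is tangent to $\Sigma_x$ at $\dot{x}$, and since no nonzero vertical vector $(w_{\mathrm{I}},0)$ is tangent to a graph, such a line must be $\lambda\mapsto(\phi(v_0)+\lambda\,D\phi(v_0)a,\,v_0+\lambda a)$ for some $a\in\RR^m\setminus\{0\}$; comparing it with the curve $\lambda\mapsto(\phi(v_0+\lambda a),v_0+\lambda a)$ in $\Sigma_x$ and expanding in Taylor series shows it has contact of order $\geq k$ with $\Sigma_x$ at $\dot{x}$ exactly when the derivatives at $\lambda=0$ of $g_a(\lambda):=\phi(v_0+\lambda a)$ of orders $2,\dots,k$ all vanish. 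So the right‑hand side of the Proposition asserts: there exists $a\neq0$ with $g_a^{(j)}(0)=0$ for all $j\geq2$. Unwinding Definition~\ref{def-ruled} the other way: a nonzero $w=(w_{\mathrm{I}},w_{\mathrm{I\!I}})$ with a whole segment $\dot{x}+\lambda w$ inside $\Sigma_x$ must have $w_{\mathrm{I\!I}}=:a\neq0$ (because $\Sigma_x$ is a graph over the $\dot{x}_{\mathrm{I\!I}}$), hence $w_{\mathrm{I}}=D\phi(v_0)a$ and $g_a$ is affine on the $\lambda$‑interval involved, the maximal such interval with the endpoint behaviour (\ref{eq:intmax}) being the maximal interval $I_a\ni0$ on which $v_0+\lambda a\in W$ and the line stays in $\ouvTM$. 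So the left‑hand side asserts: there exists $a\neq0$ such that $g_a$ is affine on $I_a$.

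The implication ``ruled $\Rightarrow$ infinite‑order contact'' is then immediate, an affine $g_a$ having all its derivatives of order $\geq2$ equal to zero. The content of the Proposition is the converse, which I expect to be the main obstacle: from ``$g_a^{(j)}(0)=0$ for all $j\geq2$'' one must conclude that $g_a$ is affine on all of $I_a$, i.e.\ upgrade infinitely many derivative conditions \emph{at one point} to an \emph{affine arc reaching the boundary}. This is exactly where I would use the \CCC{\omega} hypothesis: $g_a$ is real‑analytic, being $f$ composed with an affine map, so its Taylor series at $0$ is the affine polynomial $g_a(0)+g_a'(0)\lambda$, whence $g_a$ coincides with it on a neighbourhood of $0$ and, by the identity theorem, on all of $I_a$; the same argument propagates the coincidence ``the line lies in $\Sigma_x$'' along the line as far as it stays in $\ouvTM$, so that the segment is maximal and its limit points at finite endpoints, lying in $\overline{\ouvTM\cap\Sigma}$ but not in $\ouvTM\cap\Sigma$, meet $\partial(\ouvTM\cap\Sigma)$ as required by (\ref{eq:intmax}). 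Analyticity is genuinely needed here: in the \CCC{\infty} category a flat bump --- e.g.\ a one‑control system whose velocity set near a point is the graph $\{(e^{-1/v^2},v):v\in\RR\}$ in $\mathrm{T}_xM\cong\RR^2$ (with the value $0$ at $v=0$), which is affine along no arc yet makes the tangent line have contact of infinite order with it --- shows the equivalence can fail pointwise, and in that category one must instead invoke the sharper analysis of \cite{Land99}.
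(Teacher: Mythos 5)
Your argument establishes the Proposition only in the \CCC{\omega} case, whereas the statement (and the paper, which invokes it inside the \CCC{\infty} branch of the proof of Theorem~\ref{th-main}) covers smooth systems as well. The paper's own proof is a direct appeal to \cite[Theorem 1]{Land99}: a smooth $m$-dimensional patch of submanifold of an $n$-dimensional manifold is ruled as soon as through \emph{each} of its points there passes a line with contact of order $n+1$; infinite-order contact implies order $n+1$, and the converse direction is trivial. Your reduction to the graph $\dot{x}_{\mathrm{I}}=f(x,\dot{x}_{\mathrm{I\!I}})$ and the identity-theorem continuation are a legitimate, more elementary substitute for the analytic sub-case, but at the decisive point for the smooth case you write that one ``must instead invoke the sharper analysis of \cite{Land99}'' --- which leaves the \CCC{\infty} half of the statement unproved except by the very citation the paper uses, and so does not constitute a proof of the Proposition as stated.

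Moreover, the example you offer to justify that concession does not show what you claim. The graph of $v\mapsto e^{-1/v^2}$ admits a line with infinite-order contact only at the single point $v=0$; at every other point the second derivative of the graph function is nonzero, so no line has contact of order $\geq 2$ there. Hence this curve fails the right-hand side of the Proposition, which demands a line of infinite-order contact through \emph{every} point of $\Sigma\cap\ouvTM$, and your example refutes only the pointwise statement ``infinite-order contact at one point implies a contained segment near that point'' --- which is not what is asserted. The genuine content of \cite{Land99}, and the reason the equivalence survives in the \CCC{\infty} category, is precisely that imposing the contact condition (even to finite order $n+1$) at \emph{all} points forces ruledness with no analyticity whatsoever; that idea is absent from your proposal, and without it the smooth case is a genuine gap.
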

\begin{proof}
  From \cite[Theorem 1]{Land99}, a ``patch of'' submanifold of dimension $m$ in a manifold of dimension $n$ is
  ruled if and only if there is, through each point, a straight line that has contact of order $n+1$. This is of course implied by
  infinite order.
\end{proof}

\subsection{Dynamic equivalence}
The following notion is usually called dynamic equivalence, or equivalence by (endogenous) dynamic feedback transformations in
control theory, see \cite{Mart92th,Flie-Lev-Mar-R99geo,Jaku94,Pome95vars}. It is in fact also the notion of
Lie-Bäcklund transformation, limited to ordinary differential equation, as noted in
\cite{Flie-Lev-Mar-R99geo} or \cite{Pome95vars}.

\begin{definition}
  \label{def-eq}
  Let $\Sigma\hookrightarrow\mathrm{T}M$ and $\Sigma'\hookrightarrow\mathrm{T}M'$ be \CCC{\infty} (resp. \CCC{\omega}) regular systems
  (see (\ref{diagcom})) on two manifolds $M$ and $M'$ of dimension $n$ and $n'$, $K,K'$ two integers, $\ouv\subset
  J^{K}(M)$ and $\ouv'\subset J^{K'}(M')$ two open subsets.

  Systems $\Sigma$ and $\Sigma'$ are \emph{dynamic equivalent over $\ouv$ and $\ouv'$} if and only if there exists two mappings of
  class \CCC{\infty} (resp. \CCC{\omega})~:\vspace{-0.5em}
  \begin{equation}
    \label{eq:phipsi}
    \Phi:\,\ouv\to M'
    \,,\ \ \ \ 
    \Psi:\,\ouv' \to M\vspace{-0.5em}
  \end{equation}
  inducing differential operators $\mathcal{D}_\Phi^K$ and $\mathcal{D}_\Psi^{K'}$ --see (\ref{eq:Dphi})-- such that, for any
  interval $I$,
  \begin{itemize}
  \item for any solution $x(.):I\to M$ of $\Sigma$ whose $K$\textsuperscript{th} jet remains inside $\ouv$, \\
    $\mathcal{D}_\Phi^{K}(\,x(.)\,)$ is a solution of $\Sigma'$ whose $K'$\textsuperscript{th} jet remains inside $\ouv'$ \\ and
    $\mathcal{D}_\Psi^{K'}(\,\mathcal{D}_\Phi^K(\,x(.)\,)\,)\ =\ x(.)$,
  \item for any solution $z(.):I\to M'$ of $\Sigma'$ whose $K'$\textsuperscript{th} jet remains inside $\ouv'$, \\
    $\mathcal{D}_\Psi^{K'}(\,z(.)\,)$ is a solution of $\Sigma$ whose $K$\textsuperscript{th} jet remains inside $\ouv$ \\ and
    $\mathcal{D}_\Phi^K(\,\mathcal{D}_\Psi^{K'}(\,z(.)\,)\,)\ =\ z(.)$.
  \end{itemize}
\end{definition}

\medskip

\begin{remark}
  \label{rem-intext}
  {\upshape Since all properties are tested on \emph{solutions}, only the restriction of $\Phi$ and $\Psi$ to $\Sigma_K$ and
    $\Sigma_{K'}$ (see Proposition~\ref{prop-prol}) matter; for instance, $\Phi$ can be arbitrarily modified away from $\Sigma_K$
    without changing any conclusions. Borrowing this language from the literature on Lie-B\"{a}cklund transformations, $\Phi$ and
    $\Psi$ above are ``external'' correspondences.

    In \cite{Flie-Lev-Mar-R99geo} or in \cite{Pome95vars}, the ``internal'' point of view prevails: for instance $\Phi$ and $\Psi$
    are replaced, in \cite{Flie-Lev-Mar-R99geo}, by diffeomorphisms between diffieties. This is more intrinsic because maps are
    defined only where they are to be used. However the definitions are equivalent because these internal maps admit infinitely
    many ``external'' prolongations.

    Here, this external point of view is adopted because it makes the statement of the main result less technical. Note however
    that, as a preliminary to the proofs, an ``internal'' translation is given in section~\ref{sec-altdef}. }
\end{remark}

\begin{remark}
  \label{rem-goodOmeg}
  {\upshape In the theorems, we shall require that $\ouv$ and $\ouv'$ satisfy\vspace{-0.4em}
    \begin{equation}\vspace{-0.4em}
      \label{eq:condOmega}
      \ouv_1\cap\Sigma\;\subset\;\left(\ouv\cap\prolto{\Sigma}{K}\right)_1
      \ \ \mbox{and}\ \ \ 
      \ouv'_1\cap\Sigma'\;\subset\;\left(\ouv'\cap\prolto{\Sigma'}{K'}\right)_1\ ,
    \end{equation}
    \textit{i.e.} any (jet of) solution whose first jet is in $\ouv_1$ lifts to at least one (jet of) solution whose
    $K$\textsuperscript{th} jet is in $\ouv$.  Note the following facts about this requirement.  \\- These inclusions are
    equalities for the reverse inclusions always hold.  \\- Replacing the original $\ouv$ with
    $\ouv\setminus\left(\overline{(\ouv_1\cap\Sigma)\setminus(\ouv\cap\prolto{\Sigma}{K})_1}\right)_K$ and $\ouv'$ accordingly
    forces (\ref{eq:condOmega}); alternatively, keeping arbitrary open sets, Theorem~\ref{th-main} and Theorem~\ref{th-flat} would
    hold with $\Omega_1$ replaced with $\Omega_1\setminus\overline{(\ouv_1\cap\Sigma)\setminus(\ouv\cap\prolto{\Sigma}{K})_1}$.
    \\- When $\Sigma'=\mathrm{T}M'$ is the trivial system (see section~\ref{sec-ex}), any open $\ouv'$ satisfies
    (\ref{eq:condOmega}).  }
\end{remark}

\subsection{Static equivalence}
\begin{definition}
  \label{def-static}
  Let $\ouvTM\subset\mathrm{T}M$ and $\ouvTM'\subset\mathrm{T}M'$ be open subsets.  Systems $\Sigma$ and $\Sigma'$ are
  \emph{static equivalent} over $\ouvTM$ and $\ouvTM'$ if and only if there is a smooth diffeomorphism $\Phi:\ouvTM_0\to\ouvTM_0'$
  such that the following holds: 
\vspace{-0.5em}
\begin{equation}
    \label{eq:stateq}
    \vspace{-0.6em}
    \left.\!\!
      \begin{array}{l}
        \mbox{a smooth map $t\mapsto x(t)$ is a solution of $\Sigma$ whose first jet remains in $\ouvTM$} \\
        \mbox{if and only if $t\mapsto\Phi(x(t))$ is a solution of $\Sigma'$ whose first jet remains in $\ouvTM'$.}
      \end{array}
    \right\}
  \end{equation}
\end{definition}
\begin{definition}[Local static equivalence]
  \label{def-locstat}  Let $\ouvTM\subset\mathrm{T}M$ and $\ouvTM'\subset\mathrm{T}M'$ be open subsets.  Systems $\Sigma$ and
  $\Sigma'$ are \emph{locally} static equivalent over $\ouvTM$ and $\ouvTM'$ if and only if there are coverings of $\ouvTM\cap\Sigma$ and $\ouvTM'\cap\Sigma'$~:\vspace{-0.5em}
      \begin{equation*}\vspace{-0.5em}
        \Sigma\cap\ouvTM\ \subset\ \Sigma\cap\bigcup_{\alpha\in A}\ouvTM^\alpha\,,\ \
        \Sigma'\cap\ouvTM'\ \subset\ \Sigma'\cap\bigcup_{\alpha\in A}\ouvTM^{\prime\alpha}
      \end{equation*}
      where $A$ is a set of indices, $\ouvTM^\alpha$ and $\ouvTM^{\prime\alpha}$ are open subsets of $\ouvTM$ and
      $\ouvTM'$, such that, for all $\alpha$, systems $\Sigma$ and $\Sigma'$ are static equivalent over $\ouvTM^\alpha$ and
      $\ouvTM^{\prime\alpha}$. 
\end{definition}

This definition, stated in terms of solutions, is translated into point (a) below, that only relies on the geometry of $\Sigma$ and
$\Sigma'$ as submanifolds. Point (b) is used for instance in \cite{Kupk91,Wilk99} where ``centro-affine''
geometry of each $\Sigma_x$ is studied.
\begin{proposition}
  \label{prop-static}
  \quad\textup{(a)}\ Systems $\Sigma$ and $\Sigma'$ are \emph{static equivalent} over $\ouvTM\subset\mathrm{T}M$ and
  $\ouvTM'\subset\mathrm{T}M'$ if and only there is a smooth diffeomorphism $\Phi:\ouvTM_0\to\ouvTM_0'$ such that ${\Phi}_\star$
  maps $\ouvTM\cap\Sigma$ to $\ouvTM'\cap\Sigma'$.

  \textup{(b)}\ If systems $\Sigma$ and $\Sigma'$ are static equivalent over $\ouvTM\subset\mathrm{T}M$ and
  $\ouvTM'\subset\mathrm{T}M'$, there is, for each $x\in \ouvTM_0$ a \emph{linear isomorphism} $\mathrm{T}_x
  M\to\mathrm{T}_{\Phi(x)}M'$ that maps $\Sigma_x$ to $\Sigma'_{\Phi(x)}$.

  \vspace{-.2ex}
  \textup{(c)}\ Static equivalence preserves ruled systems.
\end{proposition}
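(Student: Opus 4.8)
The plan is to establish (a) directly from the definitions and to read off (b) and (c) from it. I write $\Phi_\star:\mathrm{T}\ouvTM_0\to\mathrm{T}\ouvTM'_0$ for the tangent map of the diffeomorphism $\Phi:\ouvTM_0\to\ouvTM'_0$; it is again a diffeomorphism, and over each $x\in\ouvTM_0$ its restriction to the fiber is a linear isomorphism $\mathrm{T}_xM\to\mathrm{T}_{\Phi(x)}M'$. The identity driving everything is the chain rule: for any smooth $x(\cdot)$ with values in $\ouvTM_0$, $j^1(\Phi\circ x)(t)=\Phi_\star\bigl(j^1(x)(t)\bigr)$ for all $t$; moreover $j^1(x)(t)\in\ouvTM$ forces $x(t)\in\ouvTM_0$, so the composition $\Phi\circ x$ is legitimate exactly when it is needed.

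For (a), I would argue as follows. If $\Phi_\star(\ouvTM\cap\Sigma)=\ouvTM'\cap\Sigma'$ (a bijection, $\Phi_\star$ being injective on its domain), then for a solution $x(\cdot)$ of $\Sigma$ with first jet in $\ouvTM$ the chain rule gives $j^1(\Phi\circ x)(t)=\Phi_\star(j^1(x)(t))\in\ouvTM'\cap\Sigma'$, so $\Phi\circ x$ is a solution of $\Sigma'$ with first jet in $\ouvTM'$; conversely $j^1(x)(t)=\Phi_\star^{-1}(j^1(\Phi\circ x)(t))\in\Phi_\star^{-1}(\ouvTM'\cap\Sigma')=\ouvTM\cap\Sigma$. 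This is precisely (\ref{eq:stateq}). For the reverse implication, assuming static equivalence via $\Phi$ and given $(x_0,v_0)\in\ouvTM\cap\Sigma$, the step needing care is to produce a $\Sigma$-solution realizing this first jet: Proposition~\ref{lem-locexpl} furnishes block coordinates $(x_{\mathrm{I}},x_{\mathrm{I\!I}})$ near $x_0$ with $\Sigma$ described by $\dot x_{\mathrm{I}}=f(x_{\mathrm{I}},x_{\mathrm{I\!I}},\dot x_{\mathrm{I\!I}})$, and taking $x_{\mathrm{I\!I}}(\cdot)$ affine with the right value and derivative at $t=0$, then integrating the ODE from the matching initial value, yields on a short interval a solution $x(\cdot)$ with $j^1(x)(0)=(x_0,v_0)$ whose first jet stays in the open set $\ouvTM$; applying (\ref{eq:stateq}) and evaluating at $0$ gives $\Phi_\star(x_0,v_0)\in\ouvTM'\cap\Sigma'$, whence $\Phi_\star(\ouvTM\cap\Sigma)\subset\ouvTM'\cap\Sigma'$, and the same argument for the equivalence realized by $\Phi^{-1}$ gives the reverse inclusion.

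Part (b) is then immediate: restricting (a) to the fiber over $x$, the linear isomorphism $\mathrm{T}_xM\to\mathrm{T}_{\Phi(x)}M'$ induced by $\Phi_\star$ carries $\Sigma_x\cap\ouvTM$ onto $\Sigma'_{\Phi(x)}\cap\ouvTM'$. For (c), suppose $\Sigma$ is ruled in $\ouvTM$ and $\Phi$ realizes a static equivalence over $\ouvTM,\ouvTM'$; I show $\Sigma'$ is ruled in $\ouvTM'$ (the other direction being symmetric). Given $(x',v')\in\ouvTM'\cap\Sigma'$, by (a) write $(x',v')=\Phi_\star(x,v)$ with $(x,v)\in\ouvTM\cap\Sigma$, take $w\neq0$ in $\mathrm{T}_xM$ and $\lambda^\pm$ witnessing ruledness of $\Sigma$ at $(x,v)$ as in Definition~\ref{def-ruled}, and let $w'\neq0$ be the image of $w$ under the isomorphism of (b). Linearity gives $(x',v'+\lambda w')=\Phi_\star(x,v+\lambda w)$ for all $\lambda$, so the segment $\lambda^-<\lambda<\lambda^+$ is carried into $\Phi_\star(\ouvTM\cap\Sigma)=\ouvTM'\cap\Sigma'$; and because $\Phi_\star$ is a homeomorphism of $\mathrm{T}\ouvTM_0$ onto $\mathrm{T}\ouvTM'_0$ taking $\ouvTM\cap\Sigma$ bijectively onto $\ouvTM'\cap\Sigma'$, it takes $\overline{\ouvTM\cap\Sigma}\cap\mathrm{T}\ouvTM_0$ onto $\overline{\ouvTM'\cap\Sigma'}\cap\mathrm{T}\ouvTM'_0$, so — the finite endpoints $(x,v+\lambda^\pm w)$ lying in $\mathrm{T}\ouvTM_0$ since their base point is $x\in\ouvTM_0$ — the conditions (\ref{eq:intmax}) transfer to $(x',v'+\lambda^\pm w')$. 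Hence $\Sigma'$ is ruled in $\ouvTM'$.

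I expect no deep obstacle; the two places that genuinely need attention are the construction of a $\Sigma$-solution with arbitrary prescribed first jet in the converse half of (a) — precisely where the explicit normal form of Proposition~\ref{lem-locexpl} is used — and the bookkeeping in (c) ensuring that the closures and boundaries of Definition~\ref{def-ruled} are taken inside $\mathrm{T}\ouvTM_0$, the domain of $\Phi_\star$. Once (b) is available, a shorter route to (c) is to quote Proposition~\ref{prop-landsberg}: a linear isomorphism of tangent spaces sends straight lines to straight lines and, being a diffeomorphism, preserves the order of contact of submanifolds, so it turns an infinite-order-contact line for $\Sigma_x$ at $v$ into one for $\Sigma'_{\Phi(x)}$ at its image, and ruledness passes across.
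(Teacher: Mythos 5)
Your proposal is correct and follows exactly the route the paper intends: the paper's own proof is the one-line remark that (a) is ``clear by differentiating solutions'' and that (b), (c) are easy consequences, and your argument is precisely that plan carried out in detail (chain rule for the forward direction of (a), realization of an arbitrary first jet via the explicit form of Proposition~\ref{lem-locexpl} for the converse, then fiberwise restriction of $\Phi_\star$ for (b) and transfer of lines and boundary conditions for (c)). The only place you diverge is in being more careful than the statement itself, noting that (b) really yields a linear isomorphism carrying $\Sigma_x\cap\ouvTM$ onto $\Sigma'_{\Phi(x)}\cap\ouvTM'$ rather than the full fibers.
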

\begin{proof}
  (b) and (c) are easy consequences of (a), which in turn is clear by differentiating solutions in Definition~\ref{def-sol}.
\end{proof}

\subsection{Examples}
\label{sec-ex}

\quad\textit{1.}
We call \emph{trivial system} on a smooth manifold $M$ the tangent bundle itself $\mathrm{T}M$. Any smooth $x(.):I\to M$
is a solution of this system; it corresponds to ``no equation'', or to the control system $\dot x=u$, or
to the ``affine diffieties'' in \cite{Flie-Lev-Mar-R99geo}.
Following \cite{Flie-Lev-Mar-R92cras,Flie-Lev-Mar-R99geo}, a system $\Sigma\hookrightarrow\mathrm{T}M$ is called
\emph{differentially flat} (on $\Omega\subset J^K(M)$) if and only if it is dynamic equivalent (over $\ouv$ and $\ouv'$) to the
trivial system $\mathrm{T}M'$ for some manifold $M'$. 

\smallskip

\paragraph{2}
Any system $\Sigma\hookrightarrow\mathrm{T}M$ is dynamic equivalent to the one
obtained by ``adding integrators''. 
It was described in Remark~\ref{rem-prol} as an affine sub-bundle $\Sigma_2\hookrightarrow\mathrm{T}\Sigma$; $\Sigma$ and
$\Sigma_2$ are equivalent in the sense of
Definition~\ref{def-eq} with $M'=\Sigma$, $K=1$, $K'=0$, $\ouv$ an open neighborhood of $\Sigma$ in $J^1(M)=\mathrm{T}M$ such that
there is a $\Phi:\ouv\to\Sigma$ that coincides with identity on $\Sigma$, $\ouv'=M'=\Sigma$ and $\Psi=\pi$ (see (\ref{diagcom})).

This may be easier to follow in the coordinates of Proposition~\ref{lem-locexpl}. The prolongation of (\ref{eq:eqSigma}) has state 
$(y_{\mathrm{I}},y_{\mathrm{I\!I}})\in\ouvgraph$, with $y_{\mathrm{I}}$ a block of dimension $n$ and $y_{\mathrm{I\!I}}$ of
dimension $m$, and equation
$\dot{y}_{\mathrm{I}}=\bigl( f(y_{\mathrm{I}},y_{\mathrm{I\!I}})\,,\, y_{\mathrm{I\!I}}\bigr)\,$.
In coordinates, the transformations $\Phi:J^1(\ouvgraph_0)\to\ouvgraph$ and $\Psi:\ouvgraph\to\ouvgraph_0$ are given by
$(y_{\mathrm{I}},y_{\mathrm{I\!I}})=\Phi(x_{\mathrm{I}},x_{\mathrm{I\!I}},\dot{x}_{\mathrm{I}},\dot{x}_{\mathrm{I\!I}})=(x,\dot{x}_{\mathrm{I\!I}})$
and $x=\Psi(y)=y_{\mathrm{I}}$.

Static equivalence between these systems of different dimension does not hold.

\smallskip

\paragraph{3}
Let us now give, mostly to illustrate the role of the integers $K,K'$ and the open sets $\ouv$ and $\ouv'$, two more specific
examples of systems $\Sigma\hookrightarrow\mathrm{T}\RR^3$ and $\Sigma'\hookrightarrow\mathrm{T}\RR^3$ with the following
equations in $\mathrm{T}\RR^3$, with coordinates
$(x_1,x_2,x_3,\dot{x}_1,\dot{x}_2,\dot{x}_3)$ or $(y_1,y_2,y_3,\dot{y}_1,\dot{y}_2,\dot{y}_3)$, clearly defining sub-bundles
with fiber diffeomorphic to $\RR^2$:
\begin{equation}
  \label{eq:exx1}
  \Sigma:\ \dot{x}_1=x_2\;,\ \ \ \ \Sigma':\ \dot{y}_1=y_2+(\dot{y}_2-y_1{\dot{y}_3})\,\dot{y}_3\ .
\end{equation}
These equations are even globally in the ``explicit'' form given by Proposition~\ref{lem-locexpl}.

\smallskip

First of all, $\Sigma$ is dynamic equivalent to the trivial system $\Sigma''=\mathrm{T}\RR^2$, with $\Phi:\RR^3\to\RR^2$ defined
by $\Phi(x_1,x_2,x_3)=(x_1,x_3)$ and $\Psi:J^1(\RR^2)\to\RR^3$ given by $\Psi(z_1,z_2,\dot{z}_1,\dot{z}_2)=(z_1,\dot{z}_1,z_2)$.
Here $K=0, K'=1, \ouv=\RR^2, \ouv'=J^1(\RR^2)$.

\smallskip

Also, with $K=1$ and $K'=2$, systems \emph{$\Sigma$ and $\Sigma'$ are dynamic equivalent} over $\ouv\subset J^1(\RR^3)$ and $\ouv'\subset
J^2(\RR^3)$ defined by 
\begin{equation*}
  \begin{split}
    \ouv= \{&(x_1,x_2,x_3,\,\dot{x}_1,\dot{x}_2,\dot{x}_3),\ 1-\dot{x}_2-{x_2}^3\neq0\}\,,
    \\
    \ouv'= \{&(y_1,y_2,y_3,\,\dot{y}_1,\dot{y}_2,\dot{y}_3,\,\ddot{y}_1,\ddot{y}_2,\ddot{y}_3),\ 1-\ddot{y}_3-{\dot{y}_3}^3\neq0\}\,.
  \end{split}
\end{equation*}
The maps $\Phi:\ouv\to\RR^3$ and $\Psi:\ouv'\to\RR^3$ are given by
\begin{gather}
  \label{eq:exx12-Phi}
  \Phi(x_1,x_2,x_3,\,\dot{x}_1,\dot{x}_2,\dot{x}_3)= (\,\frac{(1-\dot{x}_2)x_3+x_2\,\dot{x}_3}{1-\dot{x}_2-{x_2}^3}
  \,,\,\frac{{x_2}^2\,x_3+\dot{x}_3}{1-\dot{x}_2-{x_2}^3} \,,\,x_1\,)\,,
  \\
  \label{eq:exx21-Psi}
    \Psi(y_1,y_2,y_3,\,\dot{y}_1,\dot{y}_2,\dot{y}_3,\,\ddot{y}_1,\ddot{y}_2,\ddot{y}_3)=
  (\,y_3\,,\,\dot{y}_3\,,\,y_1-\dot{y}_3\,y_2\,)\,.
\end{gather}

\begin{remark}
  \label{rem-foot}
  {\upshape Since $\Psi$ does not depend on second derivatives, $K'=2$ is not the order of the differential operator
$\mathcal{D}^{K'}_{\Psi}$ in the usual sense; this illustrates the footnote after (\ref{eq:Dphi}); it is however necessary to go
to second jets to describe the domain $\ouv'$ where the restriction to solutions of $\Sigma'$ of this first order operator can be inverted.}
\end{remark}

\smallskip

Finally, note that systems \emph{$\Sigma$ and $\Sigma'$ are not static equivalent} because, from
Proposition~\ref{prop-static}-(b), this would imply that each $\Sigma_x$ is sent to some $\Sigma'_y$ by a linear isomorphism
$\mathrm{T}_xM\to\mathrm{T}_yM'$, which is not possible because each $\Sigma_x$ is an affine subspace of $\mathrm{T}_x M$ and
$\Sigma'_y$ a non degenerate quadric of $\mathrm{T}_y M'$.

\smallskip

\paragraph{4}
Consider two more systems, $\Sigma\hookrightarrow\mathrm{T}\RR^3$ and $\Sigma'\hookrightarrow\mathrm{T}\RR^3$ described as in (\ref{eq:exx1}):
\begin{equation}
  \label{eq:exx2}
  \Sigma:\ \dot{x}_1=x_2+(\dot{x}_2-x_1{\dot{x}_3})^2\,\dot{x}_3^{\;2}\;,\ \ \ \ \Sigma':\ \dot{y}_1=y_2+(\dot{y}_2-y_1{\dot{y}_3})^2\,\dot{y}_3\ .
\end{equation}
System $\Sigma$ is ruled --each $\Sigma_y$ is the union of lines $\dot{y}_2-y_1{\dot{y}_3}=\lambda$,
$\dot{y}_1=y_2+\lambda^2\,\dot{y}_3$ for $\lambda$ in $\RR$-- while $\Sigma'$ is not. 
Hence, from point (c) of Proposition~\ref{prop-static}, \emph{$\Sigma$ and $\Sigma'$ are not static equivalent}.
We shall come back to these two systems from the point of view of flatness and dynamic equivalence in sections \ref{sec-flat} and \ref{sec-main}.

\section{Necessary conditions}
\label{sec-mainth}




\subsection{The case of flatness}
\label{sec-flat}
It has been known since \cite{Rouc94,Slui93} that a system which is dynamic equivalent to a \emph{trivial system} --see the
beginning of section~\ref{sec-ex}; such a system is called differentially flat-- must be ruled; of course, at least in the smooth
case, this is true only on the domain where equivalence is assumed.

\begin{theorem}[\cite{Rouc94,Slui93}]
\label{th-flat}
  If $\Sigma$ is dynamic equivalent to the trivial system $\Sigma'\!\!=\!\mathrm{T}M'$ over $\ouv\subset J^K(M)$ and $\ouv'\subset
  J^{K'}\!(M')$ satisfying (\ref{eq:condOmega}), then $\Sigma$  is ruled in $\ouv_1$.
\end{theorem}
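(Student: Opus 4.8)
The plan is to exploit the invertibility of the pair $(\mathcal{D}_\Phi^K,\mathcal{D}_\Psi^{K'})$ on solutions, together with the fact that the trivial system $\mathrm{T}M'$ has an enormous supply of solutions — namely \emph{every} smooth curve in $M'$. Fix $\xi=(x_0,\dot x_0)\in\Sigma\cap\ouv_1$. By (\ref{eq:condOmega}) there is a solution $x(.)$ of $\Sigma$ with $j^1(x(.))(0)=\xi$ and whose $K$\textsuperscript{th} jet stays in $\ouv$; set $z(.)=\mathcal{D}_\Phi^K(x(.))$, a solution of $\mathrm{T}M'$, i.e.\ an arbitrary smooth curve, with $K'$\textsuperscript{th} jet in $\ouv'$, and $x(.)=\mathcal{D}_\Psi^{K'}(z(.))$. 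Now perturb: replace $z(.)$ by a one-parameter family $z_s(.)$ of smooth curves that agree with $z(.)$ to high order at $t=0$ but are otherwise free, so that $x_s(.)=\mathcal{D}_\Psi^{K'}(z_s(.))$ is again a solution of $\Sigma$ (its $K$\textsuperscript{th} jet still in $\ouv$ for $s$ small, by openness) with $x_s(0)=x_0$. The point $j^1(x_s(.))(0)=(x_0,\dot x_s(0))$ then traces a curve in $\Sigma_{x_0}$ through $\dot x_0$, and I want to arrange the perturbation of $z$ so that this curve is in fact a straight line in $\mathrm{T}_{x_0}M$ — or, more precisely, so that $\Sigma_{x_0}$ contains a line through $\dot x_0$ up to contact of infinite order, which by Proposition~\ref{prop-landsberg} is exactly what "ruled in $\ouv_1$" means.

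The technical heart is to compute $\dot x_s(0)$ as a function of $s$ in terms of the jet of $z_s$ at $0$. Since $\Psi$ is an order-$K'$ operator, $x_s(t)=\Psi(j^{K'}z_s(t))$ and $\dot x_s(t)=\Psi^{[1]}(j^{K'+1}z_s(t))$; evaluating at $t=0$, $\dot x_s(0)$ depends only on the $(K'+1)$-jet of $z_s$ at $0$. Hold $z_s$ fixed to order $K'$ at $t=0$ (so that $x_s(0)=x_0$ and, more importantly, the lower-order data feeding into $\Psi^{[1]}$ are frozen) and let only the single highest coordinate $z_s^{(K'+1)}(0)$ vary affinely in $s$: say $z_s^{(K'+1)}(0)=a+s\,b$ for a fixed direction $b\in\RR^{n'}$ and the base value $a$. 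Because $\Psi^{[1]}(\dots,\zeta^{(K'+1)})$ is the operator obtained by one total differentiation, it is \emph{affine} in the top variables $\zeta^{(K'+1)}$ (each appears multiplied by a partial of $\Psi$ and is not itself differentiated again). Hence $s\mapsto\dot x_s(0)$ is an affine map of $s$, tracing a straight segment $\dot x_0+s\,w$ in $\mathrm{T}_{x_0}M$ with $w=\frac{\partial\Psi^{[1]}}{\partial z^{(K'+1)}}(a)\cdot b$. For every such $s$ (small), $(x_0,\dot x_0+s w)\in\Sigma$ because $x_s(.)$ is a genuine solution of $\Sigma$; letting $s$ range over the maximal interval keeping the $K$\textsuperscript{th} jet in $\ouv$ gives a line segment in $\Sigma_{x_0}\cap(\ouv_1)_{x_0}$ through $\dot x_0$, and the boundary behaviour required in Definition~\ref{def-ruled} follows because we stop exactly when we leave the open set.

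The one thing that must be checked, and which I expect to be the main obstacle, is that the direction $w$ is \emph{nonzero} for a suitable choice of $b$ — i.e.\ that $\frac{\partial\Psi^{[1]}}{\partial z^{(K'+1)}}$ does not annihilate all of $\RR^{n'}$ at the relevant point. If it did, then $\dot x(0)$ — indeed the whole curve $j^1(x(.))(0)$ — would be independent of the top derivative $z^{(K'+1)}(0)$, and more work is needed: one would then differentiate the identity $\mathcal{D}_\Phi^K(\mathcal{D}_\Psi^{K'}(z(.)))=z(.)$ to show that $\Psi$ (restricted to jets of solutions of the trivial system, which is all jets) must genuinely depend on \emph{some} highest-order derivative, for otherwise $\mathcal{D}_\Psi^{K'}$ factors through a lower jet bundle and cannot be right-inverted by any $\mathcal{D}_\Phi^K$ onto the infinite-dimensional solution space of $\mathrm{T}M'$ — this is the rank/"genuinely dynamic" argument that rules out $w\equiv 0$ and is the place where the hypothesis that $M'$ carries the trivial system (hence maximally many solutions) is really used. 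Concretely: pick the smallest $r$ such that $\Psi^{[\,\cdot\,]}$ (equivalently $\Phi\circ$ these prolongations) depends on $z^{(r)}$; if $r\le 1$ the map would be static and $\Sigma$ static equivalent to $\mathrm{T}M'$, forcing $n=n'$ and then $\Sigma=\mathrm{T}M$, which is trivially ruled; if $r\ge 2$ the affine-in-the-top-variable argument above applies at level $r$ instead of $K'+1$ and produces the desired nonzero ruling direction $w$. Assembling these cases yields that $\Sigma$ is ruled in $\ouv_1$. $\carre$
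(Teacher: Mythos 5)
Your strategy is essentially the one the paper itself follows: Theorem~\ref{th-flat} is derived there as a corollary of Theorem~\ref{th-main}, and the proof of the latter (see the sketch in Section~\ref{sec-sketch} and Lemma~\ref{lem-prerul}) rests on exactly your two observations --- the first prolongation of $\Psi$ is \emph{affine} in the highest-order derivatives (this is equation~(\ref{eq:chi}) in the paper), so varying that top derivative sweeps out a line in the velocity set of the image system, while the case where $\Psi$ factors through $M'$ is treated separately as static equivalence (Lemma~\ref{lem-OmegaS}). Specializing to a trivial source system, where every jet is the jet of a solution, is a legitimate shortcut, and your identification of the key obstacle (the ruling direction $w$ must be nonzero) is correct.

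However, your case analysis has a genuine gap. Let $\bar\rho$ denote the highest order of derivative on which $\Psi$ effectively depends near the relevant jet (this is what you call $r$, modulo an indexing slip: you want the \emph{largest} order of effective dependence, not the smallest). Your dichotomy is (i) $\partial\Psi/\partial z^{(\bar\rho)}$ nonzero at the point, giving the line, versus (ii) $\Psi$ depending on no derivative at all, giving static equivalence to $\mathrm{T}M'$. You omit the intermediate case in which $\partial\Psi/\partial z^{(\bar\rho)}$ vanishes \emph{at the particular jet} $(z(0),\dot z(0),\ldots,z^{(K')}(0))$ without vanishing identically on a neighborhood: there your $w$ is zero for every choice of $b$, the segment degenerates to a point, and differentiating $\mathcal{D}_\Phi^K\circ\mathcal{D}_\Psi^{K'}=\mathrm{id}$ cannot help, since that rank argument only excludes \emph{identical} vanishing. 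A point of $\Sigma\cap\ouv_1$ may well have no non-degenerate preimage, so these points cannot be discarded. The paper closes this gap with a limiting argument: nearby non-degenerate jets yield genuine segments contained in $\Sigma$, and by compactness of the set of directions an accumulation line exists at the degenerate point; it need not be contained in $\Sigma$, but it has contact of \emph{infinite order} there, which suffices by Proposition~\ref{prop-landsberg}. Incidentally, this is also the clean way to finish your non-degenerate case: your appeal to Definition~\ref{def-ruled} via ``the maximal interval of $s$'' is not quite right, because your segment is cut off when the $K$\textsuperscript{th} jet of $x_s$ (or the $K'$\textsuperscript{th} jet of $z_s$) leaves $\ouv$ (or $\ouv'$), not when the first jet reaches $\partial(\ouv_1\cap\Sigma)$ as that definition requires; passing through the infinite-order-contact characterization avoids the issue entirely.
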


\paragraph{Application}
Since $\Sigma$ in (\ref{eq:exx2}) is not ruled, this theorem implies that it is not flat, i.e. not dynamic equivalent to the
trivial system $\mathrm{T}\RR^2$. On the contrary, $\Sigma'$ in (\ref{eq:exx2})
is ruled, hence the result does not help deciding it being flat or not; in fact, one conjectures that this system is not flat, but
no proof is available; see~\cite{Avan-Pom07}.

\subsection{Main idea of the proofs}
\label{sec-sketch}
Our main result, stated in next section, studies what remains of Theorem~\ref{th-flat} when $\Sigma'$ is not the trivial system.
Due to many technicalities concerning regularity conditions, the main ideas may be difficult to grasp in the proof given in
section~\ref{sec-proof}. In order to enlighten these ideas, and even the result itself, let us first sketch the proof of the above
theorem, following the line of \cite{Rouc94} (itself inspired from \cite{Hilb12}), but \emph{without assuming a priori that
$\Sigma'$ is trivial}. 

Take two arbitrary systems $\Sigma$ and $\Sigma'$, and assume that they are dynamic equivalent.
From Proposition~\ref{lem-locexpl}, one may use locally the explicit forms
\begin{displaymath}
  \Sigma:\;\dot{x}_{\mathrm{I}}=f(x_{\mathrm{I}},x_{\mathrm{I\!I}},\dot{x}_{\mathrm{I\!I}})\;,\ \ \ 
  \Sigma':\;\dot{z}_{\mathrm{I}}=g(z_{\mathrm{I}},z_{\mathrm{I\!I}},\dot{z}_{\mathrm{I\!I}})\;.
\end{displaymath}
Recall that $n$ and $n'$ denote the dimensions of $x$ and $z$; assume $n\leq n'$.
Since we work only on \emph{solutions} (see Remark~\ref{rem-intext} and also Section~\ref{sec-altdef})
and the above equations allow one to express each time-derivative $x_{\mathrm{I}}^{(j)}$, $j\geq1$, as a function of
$x_{\mathrm{I}},x_{\mathrm{I\!I}}$, $\dot{x}_{\mathrm{I\!I}}$, $\ldots$, $x_{\mathrm{I\!I}}^{(j)}$, we may work
with the variables 
$x_{\mathrm{I}},x_{\mathrm{I\!I}},\dot{x}_{\mathrm{I\!I}},\ddot{x}_{\mathrm{I\!I}},x_{\mathrm{I\!I}}^{(3)}\!\!,\ldots$ 
and $z_{\mathrm{I}},z_{\mathrm{I\!I}},\dot{z}_{\mathrm{I\!I}},\ddot{z}_{\mathrm{I\!I}},z_{\mathrm{I\!I}}^{(3)}\!\!,\ldots$ only.
The map $\Phi$ of Definition~\ref{def-eq} translates, in these coordinates, into a correspondence $z_{\mathrm{I}}=\phi_{\mathrm{I}}(x_{\mathrm{I}},x_{\mathrm{I\!I}},\dot{x}_{\mathrm{I\!I}},\ldots,x_{\mathrm{I\!I}}^{(K)})$, 
$z_{\mathrm{I\!I}}=\phi_{\mathrm{I\!I}}(x_{\mathrm{I}},x_{\mathrm{I\!I}},\dot{x}_{\mathrm{I\!I}},\ldots,x_{\mathrm{I\!I}}^{(K)})$;
here the number $K$ is chosen such that \emph{the dependence of $\phi$ versus $x_{\mathrm{I\!I}}^{(K)}$ is effective}.

If $K=0$, this reads $z=\phi(x)$, and $n<n'$ is absurd because it would imply (around points where
  the rank of $\phi$ is constant) some nontrivial relations $R(z)=0$. Hence $n=n'$, $\phi$ is a local diffeomorphism
  and static equivalence holds locally. 

If $K\geq1$, note that $\Phi$ mapping solutions of $\Sigma$ to solution
of $\Sigma'$ implies (plug the expression of $z$ given by $\phi$ into state equations of $\Sigma'$) the following identity, valid
for all $x_{\mathrm{I}},x_{\mathrm{I\!I}},\dot{x}_{\mathrm{I\!I}},\ldots,x_{\mathrm{I\!I}}^{(K+1)}$:
\begin{align*}
  &\frac{\partial\phi_{\mathrm{I}}}{\partial x_{\mathrm{I}}} f(x_{\mathrm{I}},x_{\mathrm{I\!I}},\dot{x}_{\mathrm{I\!I}})
  + \frac{\partial\phi_{\mathrm{I}}}{\partial x_{\mathrm{I\!I}}} \dot{x}_{\mathrm{I\!I}}
  + \frac{\partial\phi_{\mathrm{I}}}{\partial \dot{x}_{\mathrm{I\!I}}} \ddot{x}_{\mathrm{I\!I}}
  +\cdots+ \frac{\partial\phi_{\mathrm{I}}}{\partial x_{\mathrm{I\!I}}^{(K)}} x_{\mathrm{I\!I}}^{(K+1)}
\\
&\ \ =\ \ g\left(
\phi_{\mathrm{I}},\phi_{\mathrm{I\!I}},
\,
\frac{\partial\phi_{\mathrm{I\!I}}}{\partial x_{\mathrm{I}}} f(x_{\mathrm{I}},x_{\mathrm{I\!I}},\dot{x}_{\mathrm{I\!I}})
  + \frac{\partial\phi_{\mathrm{I\!I}}}{\partial x_{\mathrm{I\!I}}} \dot{x}_{\mathrm{I\!I}}
  + \frac{\partial\phi_{\mathrm{I\!I}}}{\partial x_{\mathrm{I\!I}}} \ddot{x}_{\mathrm{I\!I}}
  +\cdots+ \frac{\partial\phi_{\mathrm{I\!I}}}{\partial x_{\mathrm{I\!I}}^{(K)}} x_{\mathrm{I\!I}}^{(K+1)}
\,
\right)
\end{align*}
where $\phi_{\mathrm{I}}$ and $\phi_{\mathrm{I\!I}}$ depend on
$x_{\mathrm{I}},x_{\mathrm{I\!I}},\dot{x}_{\mathrm{I\!I}},\ldots,x_{\mathrm{I\!I}}^{(K)}$ only and, at least at generic points,
$(\,\frac{\partial\phi_{\mathrm{I}}}{\partial x_{\mathrm{I\!I}}^{(K)}}\, , \, \frac{\partial\phi_{\mathrm{I\!I}}}{\partial 
  x_{\mathrm{I\!I}}^{(K)}} \,) \neq (0,0)$. 
Fixing such $x_{\mathrm{I}},x_{\mathrm{I\!I}},\dot{x}_{\mathrm{I\!I}},\ldots,x_{\mathrm{I\!I}}^{(K)}$ and consequently
$z=\phi(x_{\mathrm{I}}$, $x_{\mathrm{I\!I}},\dot{x}_{\mathrm{I\!I}},\ldots,x_{\mathrm{I\!I}}^{(K)})$, and examining $\Sigma'_z$ as a
submanifold of $T_{z}M'$ with equation $\dot{z}_{\mathrm{I}}=g(z,\dot{z}_{\mathrm{I\!I}})$, it is clear that moving
$x_{\mathrm{I\!I}}^{(K+1)}$ in a direction which is not in the kernel of 
$\frac{\partial\phi_{\mathrm{I\!I}}}{\partial
  x_{\mathrm{I\!I}}^{(K)}}(x_{\mathrm{I}},x_{\mathrm{I\!I}},\dot{x}_{\mathrm{I\!I}},\ldots,x_{\mathrm{I\!I}}^{(K)})$ provides a
straight line of $T_{z}M'$ contained in $\Sigma'_z$ and, since this
covers all points of $\Sigma'_z$, proves that the latter is a ruled submanifold of $T_{z}M'$ and finally that system $\Sigma'$ is ruled.
We only examined regular points; see Section~\ref{sec-proof} for a proper proof.

Collecting the two cases, we have proved that, if $n\leq n'$, either $\Sigma'$ is ruled or $n=n'$ and $\Sigma'$ is static
equivalent to $\Sigma$. This is stated formally in Theorem~\ref{th-main}.

\subsection{The result for general systems}
\label{sec-main}
The contribution of this paper is the following strong necessary condition for 
dynamic equivalence between two general systems. $\ouv_1$ and $\ouv'_1$ are defined by (\ref{eq:0zero1}).

\smallskip

\begin{theorem}
  \label{th-main}
  Let $\Sigma$ and $\Sigma'$ be systems on manifolds of dimension $n$ and $n'$, $K,K'$ two integers and $\ouv\subset J^K(M)$, $\ouv'\subset J^{K'}(M')$ two open
  subsets satisfying (\ref{eq:condOmega}).

  If $\Sigma$ and $\Sigma'$ are dynamic equivalent over $\ouv$ and $\ouv'$, then
  \begin{description}
  \item[\boldmath if $n>n'$,] system $\Sigma$ is ruled in $\ouv_1$,
  \item[\boldmath if $n<n'$,] system $\Sigma'$ is ruled in $\ouv'_1$,
  \item[\boldmath if $n=n'$,] then \hfill (see Definition~\ref{def-locstat} for ``locally static equivalent'')
  \item[\ -] \emph{in the real analytic case}, and if $\ouv_1\cap\Sigma$ and $\ouv'_1\cap\Sigma'$ are connected, \\either systems $\Sigma$ and $\Sigma'$ are ruled in $\ouv_1$ and $\ouv'_1$ respectively, \\or
    they are locally static equivalent over $\ouv_1$ and $\ouv'_1$,
  \item[\ -] \emph{in the smooth (\CCC{\infty}) case}, there are open subsets $\mathcal{R},\mathcal{S}$ of $\ouv_1$ and $\mathcal{R}',\mathcal{S}'$ 
    of $\ouv'_1$ \\such that $\ouv_1$ and $\ouv'_1$ are covered as\vspace{-0.5em}
    \begin{equation}    \vspace{-0.5em}
      \label{eq:partition}
      \ouv_1 = \overline{\mathcal{R}}\cup\mathcal{S}= \mathcal{R}\cup\overline{\mathcal{S}}\,,\ \ \ 
\ouv'_1 = \overline{\mathcal{R}'}\cup\mathcal{S}' = \mathcal{R}'\cup\overline{\mathcal{S}'}
    \end{equation}
and the systems have the following properties on these sets:
    \begin{enumerate}
    \item $\Sigma$ and $\Sigma'$ are ruled in $\mathcal{R}$ and $\mathcal{R}'$ respectively,
    \item $\Sigma$ and $\Sigma'$ are \emph{locally} static equivalent over $\mathcal{S}$ and $\mathcal{S}'$.
    \end{enumerate}
  \end{description}
\end{theorem}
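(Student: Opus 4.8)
The plan is to make rigorous the sketch given in Section~\ref{sec-sketch}, handling the singularities carefully. First I would pass to the ``internal'' picture promised in Section~\ref{sec-altdef}: working with solutions only, I replace $\Phi,\Psi$ by their restrictions to $\prolto{\Sigma}{K}$ and $\prolto{\Sigma'}{K'}$ and, using the explicit coordinates of Proposition~\ref{prop-prol}, coordinatize $\prolto{\Sigma}{\infty}$ by $(x_{\mathrm{I}},x_{\mathrm{I\!I}},\dot x_{\mathrm{I\!I}},\ddot x_{\mathrm{I\!I}},\ldots)$ and similarly for $\Sigma'$. In these coordinates $\Phi$ becomes a pair of maps $z_{\mathrm{I}}=\phi_{\mathrm{I}}(x_{\mathrm{I}},x_{\mathrm{I\!I}},\dot x_{\mathrm{I\!I}},\ldots,x_{\mathrm{I\!I}}^{(K)})$, $z_{\mathrm{I\!I}}=\phi_{\mathrm{I\!I}}(\cdots)$, where $K$ is chosen minimal so that there is at least one point where $\partial\phi/\partial x_{\mathrm{I\!I}}^{(K)}\neq0$; if no derivatives appear at all then $K=0$. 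The inverse $\Psi$ gives maps $\psi_{\mathrm I},\psi_{\mathrm{I\!I}}$ of $(z_{\mathrm I},z_{\mathrm{I\!I}},\ldots,z_{\mathrm{I\!I}}^{(K')})$, and the two identities $\Psi\circ\Phi=\mathrm{id}$, $\Phi\circ\Psi=\mathrm{id}$ (as differential operators on solutions) hold identically in the respective jet variables.

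The core step is the ruledness argument. Fix a point where $\partial\phi_{\mathrm{I\!I}}/\partial x_{\mathrm{I\!I}}^{(K)}$ (together with the $\mathrm I$-block) is nonzero and of locally constant rank; near it, differentiating the relation ``$\phi$ carries solutions of $\Sigma$ to solutions of $\Sigma'$'' once in time yields the displayed identity of Section~\ref{sec-sketch}, valid for all values of the free variable $x_{\mathrm{I\!I}}^{(K+1)}\in\RR^m$. Fixing everything of order $\le K$ fixes the base point $z=\phi(\cdots)\in M'$; then $x_{\mathrm{I\!I}}^{(K+1)}\mapsto$ (left side, which feeds $\dot z_{\mathrm{I}}$; right side, whose $\dot z_{\mathrm{I\!I}}$-slot is affine in $x_{\mathrm{I\!I}}^{(K+1)}$) traces out a line segment in $\Sigma'_z\subset\mathrm T_zM'$ through the image point, as long as we move $x_{\mathrm{I\!I}}^{(K+1)}$ outside the kernel of $\partial\phi_{\mathrm{I\!I}}/\partial x_{\mathrm{I\!I}}^{(K)}$ at that point (if that kernel is everything, move it so that $\partial\phi_{\mathrm I}/\partial x_{\mathrm{I\!I}}^{(K)}\neq0$, which forces $\dot z_{\mathrm I}$ to vary while $\dot z_{\mathrm{I\!I}}$ stays put, still a line in $\Sigma'_z$). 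Since $\Phi$ is invertible, its image meets every fibre $\Sigma'_z$ over the relevant open set, so every point of $\Sigma'$ (over that open set) lies on such a line; by Proposition~\ref{prop-landsberg} (infinite-order contact suffices, and here we even get a genuine segment until it exits), $\Sigma'$ is ruled there. Symmetrically, if the inverse direction has a nontrivial jet dependence ($K'\ge1$), $\Sigma$ is ruled on the corresponding open set. The only remaining possibility on the open dense set where ranks are locally constant is $K=K'=0$, i.e. $z=\phi(x)$, $x=\psi(z)$ with $\psi\circ\phi=\mathrm{id}$, $\phi\circ\psi=\mathrm{id}$; then $\phi$ is a local diffeomorphism, which forces $n=n'$ (a non-surjective $\phi$ with $n<n'$ would, on a constant-rank neighbourhood, produce a nontrivial relation $R(z)=0$ on the image, contradicting that every solution of $\Sigma'$ is hit), and differentiating $z(t)=\phi(x(t))$ shows $\phi_\star$ carries $\Sigma$ onto $\Sigma'$, i.e. static equivalence over that neighbourhood by Proposition~\ref{prop-static}(a).

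Now I assemble the global statement. Let $\mathcal R\subset\ouv_1$ be the (open) set of points near which $\Sigma$ is ruled (witnessed by the argument in the $K'\ge1$ branch or transported from the $\Sigma'$-side), and $\mathcal S\subset\ouv_1$ the (open) set near which $K=K'=0$ holds with constant rank, giving local static equivalence; define $\mathcal R',\mathcal S'$ analogously in $\ouv'_1$ via the equivalence. The set where the relevant ranks are locally constant is open and dense, and on it every point falls into the ``ruled'' case or the ``$K=K'=0$'' case, so $\mathcal R\cup\mathcal S$ is dense in $\ouv_1$; pushing boundaries around gives the covering $\ouv_1=\overline{\mathcal R}\cup\mathcal S=\mathcal R\cup\overline{\mathcal S}$ (and similarly for $\ouv'_1$), which is exactly \eqref{eq:partition}. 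When $n>n'$ the case $K=K'=0$ cannot occur at all (no local diffeomorphism $M\to M'$), so $\mathcal R=\ouv_1$ and $\Sigma$ is ruled throughout; symmetrically for $n<n'$. Finally, in the \CCC\omega\ case with $\ouv_1\cap\Sigma$ and $\ouv'_1\cap\Sigma'$ connected: ruledness is the vanishing of certain real-analytic ``curvature'' functions detecting non-infinite-order contact of lines, so if $\Sigma$ is not ruled on all of $\ouv_1$ those functions are not identically zero, hence vanish only on a proper analytic subset, i.e. $\mathcal S$ is dense; on the $\mathcal S$-side the static-equivalence diffeomorphisms are real-analytic and, by analytic continuation along the connected set together with the rigidity of Proposition~\ref{prop-static}(b), patch to local static equivalence over all of $\ouv_1,\ouv'_1$. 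I expect the main obstacle to be precisely this bookkeeping near singularities: guaranteeing that the constant-rank stratum is dense, that the chosen integer $K$ can be taken locally constant there, and that the two open sets $\mathcal R,\mathcal S$ (and their primed counterparts) fit together to give \eqref{eq:partition} rather than merely covering a dense subset.
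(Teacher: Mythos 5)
Your proposal follows the same strategy as the paper (it is essentially an expansion of the sketch in Section~\ref{sec-sketch}), but it leaves open exactly the points where the paper's four lemmas do the real work, and you partly acknowledge this yourself. The most serious gap is the treatment of the singular points. You only produce lines at points where the relevant ranks are locally constant, and then hope that ``pushing boundaries around'' yields the covering (\ref{eq:partition}); but ruledness on a dense open subset of $\mathcal{R}$ is not the statement of the theorem, and the passage to all of $\mathcal{R}$ (and to all of $\ouv_1$ when $n\neq n'$) is not automatic. The paper handles this in Lemma~\ref{lem-prerul}: at a point $\bar\xi_K\notin\widetilde{\ouv}^S$ one takes the \emph{local} minimal order $\bar\rho$ of effective dependence of $\phi$; the derivative $\partial\phi/\partial x_{\mathrm{I\!I}}^{(\bar\rho)}$ may well vanish at $\bar\xi_K$ itself, and the line there is obtained as an accumulation point of lines at nearby points where it does not vanish. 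The limit line is only guaranteed to have \emph{infinite-order contact} with $\Sigma'$, not to be contained in it, which is precisely why Proposition~\ref{prop-landsberg} (Landsberg's theorem) is needed and why the sets $\mathcal{R},\mathcal{R}'$ are built as interiors of closures in (\ref{eq:R'})--(\ref{eq:R}), making (\ref{eq:partition}) an identity rather than a density statement. Without this limiting argument your $n<n'$ case also fails to give ruledness at \emph{every} point of $\ouv'_1\cap\Sigma'$. Relatedly, your claim that the image of $\Phi$ ``meets every fibre $\Sigma'_z$'' is weaker than what you use: you need $\widetilde{\Phi}^{[1]}$ to be \emph{onto} $\Sigma'\cap\ouv'_1$ (every velocity, not just every base point), which is the content of (\ref{eq:surjec}) in Lemma~\ref{lem-biz} and is obtained by composing with $\widetilde{\Psi}^{[1]}$.

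Two further points. In the real analytic case you argue that ``ruledness is the vanishing of certain real-analytic curvature functions''; this is not justified (being ruled is an existential condition on lines, not obviously the zero set of specific analytic functions), and the subsequent analytic continuation of the local static equivalences is not carried out. The paper instead applies analyticity to the set $\widetilde{\ouv}^S$ where $\widetilde{\Phi}$ factors through $\pi_{K,0}$: the vertical derivatives of $\widetilde{\Phi}$ are analytic functions, so $\widetilde{\ouv}^S$ is all or nothing on a connected set (Lemma~\ref{lem-ana}), and the dichotomy follows. Finally, your fallback case ``if that kernel is everything, move $x_{\mathrm{I\!I}}^{(K+1)}$ so that $\partial\phi_{\mathrm I}/\partial x_{\mathrm{I\!I}}^{(K)}\neq0$, which forces $\dot z_{\mathrm I}$ to vary while $\dot z_{\mathrm{I\!I}}$ stays put, still a line in $\Sigma'_z$'' is incorrect as reasoning: $\Sigma'_z$ is a graph $\dot z_{\mathrm I}=g(z,\dot z_{\mathrm{I\!I}})$, so no segment with $\dot z_{\mathrm{I\!I}}$ fixed and $\dot z_{\mathrm I}$ varying can lie in it; in fact the identity you wrote forces $\partial\phi_{\mathrm I}/\partial x_{\mathrm{I\!I}}^{(K)}$ to vanish on the kernel of $\partial\phi_{\mathrm{I\!I}}/\partial x_{\mathrm{I\!I}}^{(K)}$, so the case cannot arise; the paper avoids the issue by taking the direction $\underline{w}$ in the range of the full differential $\partial\phi/\partial x_{\mathrm{I\!I}}^{(\bar\rho)}$ viewed as a map into $\mathrm{T}_{\phi(\cdot)}M'$.
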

\begin{proof}
  See Section~\ref{sec-proof}.
\end{proof}

\smallskip
A few remarks are in order:

\smallskip
\paragraph{1} Theorem \ref{th-flat} is a consequence. Indeed, $n'\!\!=\!m'$ because $\Sigma'$ is trivial, dynamic equivalence implies $m'\!\!=\!m$ 
(this is common knowledge; see \cite{Cart14abs}, \cite{Flie-Lev-Mar-R99geo} or
\cite[Theorem 1]{Pome95vars}), and $n\geq m$ for any system; hence $n\geq n'$ and Theorem~\ref{th-main} directly implies that $\Sigma$ is ruled except
  if the systems are static equivalent, but this also implies that $\Sigma$ is ruled from point (c) of
  Proposition~\ref{prop-static} and the fact that the trivial system $\Sigma'$ is ruled. 

  Static equivalence still appears explicitly in Theorem~\ref{th-main} because two general systems can be static equivalent
  without being ruled.

\smallskip
\paragraph{2} The part ``$n>n'$ or $n<n'$'' can be rephrased as follows: if a system is not ruled, it cannot be dynamic equivalent to any
  system of smaller dimension. No necessary condition is given on the system of lower dimension; indeed \emph{any} system is
  dynamic equivalent to at least its first prolongation, see Example~2 in Section~\ref{sec-ex}.

\smallskip
\paragraph{3} The case $n=n'$ states that dynamic equivalence, except when it reduces to static equivalence, forces both systems to be
  ruled (in the real analytic case, the added rigidity prevents the two situations from occurring simultaneously).

  In other words, if two systems are not static equivalent and at least one of them is not ruled, they are not dynamic equivalent.
  Since the two conditions can be checked rather systematically, this yields a new and powerful method for proving that two
  systems are \emph{not} dynamic equivalent, a difficult task in general because very few invariants of dynamic equivalence are
  known. 

  For instance, to the best of our knowledge, the state of the art does not allow one to decide whether $\Sigma$ and $\Sigma'$ in
  (\ref{eq:exx2}) are dynamic equivalent or not. In section~\ref{sec-ex}, it was noted that they are not static equivalent
  and $\Sigma'$ is not ruled. This implies:
  \begin{corollary}
    \label{cor-ex}
    $\Sigma$ and $\Sigma'$ in (\ref{eq:exx2}) are not dynamic equivalent over any domains.
  \end{corollary}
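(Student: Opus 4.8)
The plan is to invoke Theorem~\ref{th-main} in the equidimensional case $n=n'=3$ and reach a contradiction. Both systems in (\ref{eq:exx2}) live on $\RR^3$ and are polynomial, hence \CCC{\omega} (\emph{a fortiori} \CCC{\infty}); Section~\ref{sec-ex} records that one of the two is ruled on all of $\mathrm{T}\RR^3$ while the other is not, and --- calling for definiteness $\Sigma$ the ruled one and $\Sigma'$ the non-ruled one --- a short computation of the lines contained in the fibres $\Sigma'_y$ in fact shows that $\Sigma'$ fails to be ruled at every point of a dense open subset $N'$ of $\Sigma'$. Suppose, for contradiction, that $\Sigma$ and $\Sigma'$ were dynamic equivalent over open sets $\ouv\subset J^K(\RR^3)$ and $\ouv'\subset J^{K'}(\RR^3)$ with $\ouv\cap\prolto{\Sigma}{K}\neq\emptyset$ (otherwise the equivalence is vacuous). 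Then, transporting a solution of $\Sigma$ through $\mathcal{D}_\Phi^K$, the set $\ouv'_1\cap\Sigma'$ is a nonempty open subset of $\Sigma'$, so it meets $N'$: fix $\xi'\in\ouv'_1\cap\Sigma'\cap N'$.

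The next step is to localize the equivalence. Using the ``internal'' description of dynamic equivalence (Remark~\ref{rem-intext} and the translation promised in Section~\ref{sec-altdef}), namely a diffeomorphism between the infinite prolongations of $\Sigma$ and $\Sigma'$, the equivalence restricts to arbitrarily small compatible subdomains; I would shrink $\ouv$ and $\ouv'$ to connected ones with $\xi'\in\ouv'_1$, keeping $\ouv_1\cap\Sigma$ and $\ouv'_1\cap\Sigma'$ connected, and apply the harmless adjustment of Remark~\ref{rem-goodOmeg} so that (\ref{eq:condOmega}) holds. Now Theorem~\ref{th-main} (real-analytic case, $n=n'$) yields the dichotomy: either $\Sigma$ and $\Sigma'$ are ruled in $\ouv_1$ and $\ouv'_1$ respectively, or they are locally static equivalent over $\ouv_1$ and $\ouv'_1$.

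Both branches are impossible. The first contradicts the choice of $\xi'\in\ouv'_1\cap\Sigma'\cap N'$, a point at which $\Sigma'$ is not ruled. For the second, local static equivalence means $\ouv'_1\cap\Sigma'$ is covered by patches on each of which $\Sigma'$ is static equivalent to $\Sigma$; since static equivalence preserves ruled systems (point (c) of Proposition~\ref{prop-static}) and $\Sigma$ is ruled, $\Sigma'$ would be ruled throughout $\ouv'_1$, again contradicting the non-ruledness at $\xi'$. Hence no such $\ouv,\ouv'$ exist, which is the assertion.

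The step I expect to be the main obstacle is the localization: one must be certain that a dynamic equivalence over $\ouv,\ouv'$ genuinely induces one over small connected subdomains --- in particular over a subdomain whose ``$\Sigma'$-trace'' contains the prescribed non-ruled point $\xi'$ --- so that the real-analytic form of Theorem~\ref{th-main}, with its connectedness hypothesis, applies. This is routine from the internal viewpoint but deserves to be stated carefully. One can also avoid it by running the \CCC{\infty} form of Theorem~\ref{th-main} instead and using Proposition~\ref{prop-landsberg} to note that the ruled locus of $\Sigma'$ is closed in $\Sigma'$ --- finite-order contact is a closed condition and the relevant projective line directions range over a compact set --- so that ruledness on the dense open set $\mathcal{R}'\cup\mathcal{S}'$ propagates to all of $\ouv'_1$; this variant, however, requires a little extra care in tracking ruledness \emph{on $\Sigma'$} across the common boundary of $\mathcal{R}'$ and $\mathcal{S}'$.
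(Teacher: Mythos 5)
Your proposal is correct and is, in substance, the argument the paper intends: the paper offers no written proof of the corollary beyond deriving it from Theorem~\ref{th-main} together with the observations of Example~4 (one system ruled, the other not, hence not static equivalent by point (c) of Proposition~\ref{prop-static}). Two comments. First, your remark that the non-ruled system fails to be ruled at every point of a \emph{dense} open subset $N'$ of its fibres is not a decoration but a genuinely needed supplement: ``is not ruled'' alone would leave open the possibility of equivalence over domains whose trace on that system avoids the non-ruled locus, and the paper leaves this point entirely implicit; your computation (only the two lines $\dot x_3=0$ and $\dot x_2=x_1\dot x_3$ in the plane $\dot x_1=x_2$ lie in each fibre) settles it. Second, the localization step you single out as the main obstacle can be bypassed altogether, as you suggest in your last paragraph, and more simply than you fear: apply the \CCC{\infty} form of Theorem~\ref{th-main} to the \emph{original} $\ouv,\ouv'$ (adjusted as in Remark~\ref{rem-goodOmeg} so that (\ref{eq:condOmega}) holds), note that $\mathcal{R}'\cup\mathcal{S}'$ is open and dense in $\ouv'_1$ relative to $\Sigma'$, and choose the non-ruled point $\xi'$ inside $N'\cap\ouv'_1\cap(\mathcal{R}'\cup\mathcal{S}')$; then $\xi'\in\mathcal{R}'$ contradicts point~1 of the theorem and $\xi'\in\mathcal{S}'$ contradicts point~2 via Proposition~\ref{prop-static}(c), so no boundary-crossing argument and no restriction of the equivalence to subdomains is ever required. (A cosmetic caveat: the paper's own labelling of which of the two systems in (\ref{eq:exx2}) is ruled is inconsistent between Example~4 and Section~\ref{sec-flat}; your ``for definiteness'' relabelling handles this harmlessly.)
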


\smallskip
\paragraph{4} Since being ruled is non-generic~\cite{Rouc94}, we have the following general consequence (in terms of germs of systems because the
  conclusion in the theorem is only local).
  \begin{corollary}
    \label{cor-gener}
    Generic static equivalence classes for germs of systems of the same dimension at a point are also dynamic equivalence classes.
  \end{corollary}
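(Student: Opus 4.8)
The plan is to read Corollary~\ref{cor-gener} as follows: among germs of systems of a fixed state dimension $n$, there is a dense family (with complement of infinite codimension) such that, for any germ in it, every germ of an $n$-dimensional system dynamic equivalent to it is already static equivalent to it; since a static equivalence is a special dynamic equivalence (take $K=K'=0$, $\Psi=\Phi^{-1}$ in Definition~\ref{def-eq}), this amounts to saying the dynamic and static classes of such a germ coincide. The distinguished family will be the germs that are \emph{not ruled} at their base point $\xi\in\Sigma$; that this is generic is precisely the content of \cite{Rouc94} (see also \cite{Tcho87}). It also helps to note that ``not ruled at $\xi$'' is an \emph{open} condition on $\xi\in\Sigma$: in the graph form $\dot{x}_{\mathrm{I}}=f(x_{\mathrm{I}},x_{\mathrm{I\!I}},\dot{x}_{\mathrm{I\!I}})$ of Proposition~\ref{lem-locexpl}, a straight line that has contact of infinite order with the velocity set $\Sigma_x$ at $\xi=(x,\dot{x})$ is a nonzero direction along which all derivatives of $f$ of order at least $2$ vanish at $\xi$; normalizing the direction and passing to a limit shows that the set of $\xi$ carrying such a line is closed. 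Hence a generic germ is not ruled at any point of a whole neighborhood $\ouvTM\subset\mathrm{T}M$ of $\xi$.

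Next, let $\Sigma$ be not ruled at any point of $\Sigma\cap\ouvTM$ and suppose its germ at $\xi$ is dynamic equivalent to the germ of a system $\Sigma'$ with $\dim M'=\dim M=n$. Realize the equivalence over open sets $\ouv\subset J^K(M)$, $\ouv'\subset J^{K'}(M')$; shrinking $\ouv$ around the relevant jet, shrinking $\ouv'$ accordingly and restoring (\ref{eq:condOmega}) via Remark~\ref{rem-goodOmeg}, I may assume $\ouv_1\subset\ouvTM$ and that $\ouv_1\cap\Sigma$, $\ouv'_1\cap\Sigma'$ are connected. Now apply Theorem~\ref{th-main} in the case $n=n'$. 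In the \CCC{\omega} case its dichotomy leaves only local static equivalence of $\Sigma$ and $\Sigma'$ over $\ouv_1,\ouv'_1$, because ``$\Sigma,\Sigma'$ ruled in $\ouv_1,\ouv'_1$'' is impossible: $\Sigma$ is not ruled at any point of $\ouv_1\cap\Sigma\subset\ouvTM\cap\Sigma$, and this set is nonempty (it contains $\xi$), so $\Sigma$ is not ruled in $\ouv_1$. Choosing the member of the covering of $\ouv_1\cap\Sigma$ through $\xi$ then yields a static equivalence of the germs, so the germ of $\Sigma'$ lies in the static class of $\Sigma$. In the \CCC{\infty} case, Theorem~\ref{th-main} produces the partition (\ref{eq:partition}) with $\Sigma$ ruled in $\mathcal R$; but any point of $\Sigma\cap\mathcal R$ lies in $\Sigma\cap\ouvTM$ and would be a point at which $\Sigma$ is simultaneously ruled and not ruled, so $\Sigma\cap\mathcal R=\emptyset$. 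Therefore $\Sigma$ meets $\mathcal R\cup\mathcal S$ exactly in $\Sigma\cap\mathcal S$, which --- since the complement of $\mathcal R\cup\mathcal S$ in $\ouv_1$ lies in the nowhere dense closed set $\partial\mathcal R\cap\partial\mathcal S$ --- is open and dense in $\Sigma\cap\ouv_1$. Thus $\Sigma$ and $\Sigma'$ are locally static equivalent on a neighborhood of a dense subset of $\Sigma\cap\ouv_1$, and their germs are static equivalent at every such point. Since the conclusion is local, exactly like that of Theorem~\ref{th-main}, Corollary~\ref{cor-gener} follows.

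I expect the only real obstacle to be the last step in the \CCC{\infty} case: moving the local static equivalence from the dense open set $\Sigma\cap\mathcal S$ to a base point $\xi$ that happens to sit on the ``seam'' $\partial\mathcal R\cap\partial\mathcal S$. The cheap resolution, which is enough for a statement about \emph{generic} germs, is to absorb such seam points into the exceptional set --- they are nowhere dense in $\Sigma$ once $\Sigma\cap\mathcal R=\emptyset$ --- so that ``generic germ'' effectively means ``germ at a point of $\mathcal S$'', still a dense condition; a stronger resolution would use that all the local static diffeomorphisms on the pieces of $\mathcal S$ are restrictions of the single smooth pair $(\Phi,\Psi)$ and argue, by continuity of the induced map on $M$, that it extends across the seam (the \CCC{\omega} rigidity in Theorem~\ref{th-main} is precisely what makes this automatic in the analytic case). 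A secondary, purely bookkeeping point is to fix once and for all the topology on germs of systems in which ``generic'' and ``infinite codimension'' are meant, for which I would simply quote the framework of \cite{Rouc94,Tcho87}; with that fixed, the statement is --- modulo the seam --- an immediate corollary of Theorem~\ref{th-main}.
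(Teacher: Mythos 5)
Your proof follows the paper's own (essentially one-line) justification of the corollary: non-ruledness at a point is a generic, open condition, and Theorem~\ref{th-main} in the case $n=n'$ then leaves local static equivalence as the only possibility. The ``seam'' worry you raise in the \CCC{\infty} case in fact dissolves once one looks inside the proof of Theorem~\ref{th-main}: there $\overline{\mathcal{R}}\cap\Sigma$ is by construction the closed set $\widetilde{R}$ of points of $\Sigma\cap\ouv_1$ admitting a line of infinite-order contact, which is empty when $\Sigma$ is nowhere ruled on $\ouv_1\cap\Sigma$, so the base point automatically lies in $\mathcal{S}$ and no extra genericity bookkeeping is needed.
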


  Note that this is in the mathematical sense of ``generic'': this does not prevent many interesting systems from being dynamic
  equivalent without being static equivalent... it might even be that ``most interesting systems'' fall in this case~!

\section{Proofs}
\label{sec-proofs}
Recall that subscripts always refer to the order of the jet space. The notation (\ref{eq:0zero1}) is constantly used.

\subsection{Preliminaries: a re-formulation of dynamic and static equivalence}
\label{sec-altdef}
The maps $\Phi$ and $\Psi$ are always applied to jets of solutions, and, according to (\ref{eq:f(j)}), the $K$\textsuperscript{th} jets of
solutions of $\Sigma$ remain in $\prolto{\Sigma}{K}$; hence the only information to retain about $\Phi$ and $\Psi$ is their
restriction to, respectively,
\begin{equation}
  \label{eq:omegtilde}
  \widetilde{\ouv}=\ouv\cap\prolto{\Sigma}{K}\ \ \mbox{and}\ \ 
  \widetilde{\ouv}'=\ouv'\cap\prolto{\Sigma'}{K'}\,.
\end{equation}

We need one more piece of notation: according to Section~\ref{sec-diffop}, the
$\ell$\textsuperscript{th} prolongation of a smooth map $\widetilde{\Phi}:\widetilde{\ouv}\to M'$,
is a map $\pi_{K+\ell,\ell}^{\ -1}(\widetilde{\ouv})\to J^\ell{M'}$; again, only its restriction to $\prolto{\widetilde{\ouv}}{K+\ell}$ will
matter; for this reason, the notations $\widetilde{\Phi}^{[\ell]}$ and $\widetilde{\Psi}^{[\ell]}$ will not stand for the prolongations as defined earlier, but
rather these restrictions:
\begin{align}
  \label{eq:Phi[l]}
  & \widetilde{\Phi}^{[\ell]}:\prolto{\widetilde{\ouv}}{K+\ell}\to J^\ell(M') \,,\ \ \
  \widetilde{\Psi}^{[\ell]}:\prolto{\widetilde{\ouv'}}{K'+\ell}\to J^\ell(M) \,,
  \\
  \label{eq:omegtildeprol}
  \mbox{with}\hspace{1em}& \prolto{\widetilde{\ouv}}{K+\ell}=\prolto{\ouv}{K+\ell}\cap\prolto{\Sigma}{K+\ell}\,,\ \;
  \prolto{\widetilde{\ouv}'}{K'+\ell}=\prolto{\ouv'}{K'+\ell}\cap\prolto{\Sigma'}{K'+\ell}\,.
\end{align}
We may now state the following proposition. Smooth (\CCC{\infty} or \CCC{\omega}) maps on $\prolto{\widetilde{\ouv}}{K+\ell}$ or
$\prolto{\widetilde{\ouv'}}{K'+\ell}$ can be defined in a standard way because, from Proposition~\ref{lem-locexpl}, these are
smooth embedded submanifolds. 
\begin{proposition}[Dynamic Equivalence]
  \label{prop-eq}
  Let $K,K'$ be integers, $\ouv\subset J^{K}(M)$ and $\ouv'\subset J^{K'}(M')$ two open subsets. Systems $\Sigma$ and $\Sigma'$
  are \emph{dynamic equivalent over $\ouv$ and $\ouv'$} if and only if, with $\widetilde{\ouv},\widetilde{\ouv}'$ defined in
  (\ref{eq:omegtilde}), there exist two smooth (real analytic, in the real analytic case) mappings
  \begin{displaymath}
    \widetilde{\Phi}:\widetilde{\ouv}\to M'
    \ \ \mbox{and} \ \ \ 
    \widetilde{\Psi}:\widetilde{\ouv}'\to M\,,\vspace{-1em}
  \end{displaymath}
  such that\vspace{-1em}
  \begin{equation}
    \label{eq:eqgeom1}
    \widetilde{\Phi}^{[1]}(\prolto{\widetilde{\ouv}}{K+1})\subset\Sigma'\,,\ \ \widetilde{\Psi}^{[1]}(\prolto{\widetilde{\ouv'}}{K'+1})\subset\Sigma\,,
  \end{equation}
  and, with $\widetilde{\Phi}^{[K]}$ and $\widetilde{\Psi}^{[K]}$ defined by (\ref{eq:Phi[l]}),
  \begin{gather}
    \label{eq:eqgeom2}
    \widetilde{\Phi}^{[K']}(\prolto{\widetilde{\ouv}}{K+K'})\subset\ouv'\,,\
    \widetilde{\Psi}^{[K]}(\prolto{\widetilde{\ouv}'}{K+K'})\subset\ouv\,,
    \\
    \label{eq:eqgeom3}
    \widetilde{\Psi}\circ\widetilde{\Phi}^{[K']}= \left. \vphantom{\Phi^{[K]}}
      \pi_{K+K',\,0}\right|_{\prolto{\widetilde{\ouv}}{K+K'}}\,,\ \ \widetilde{\Phi}\circ\widetilde{\Psi}^{[K]}=
    \left.\vphantom{\Phi^{[K]}} \pi_{K+K',\,0}\right|_{\prolto{\widetilde{\ouv}'}{K+K'}}\,.
  \end{gather}
\end{proposition}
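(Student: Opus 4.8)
The plan is to prove the two implications of the equivalence separately, both resting on two ingredients already at hand: the prolongation identity $j^r\circ\Phi\circ j^k=\Phi^{[r]}\circ j^{k+r}$ of (\ref{eq:Phiprol}) (together with its projected forms (\ref{eq:piPhi})), and a \emph{lifting lemma} describing $\prolto{\Sigma}{\ell}$ in terms of solutions; everything else is a matter of tracking the nested domains (\ref{eq:omegtilde})--(\ref{eq:omegtildeprol}) carefully enough that all the compositions occurring in (\ref{eq:eqgeom1})--(\ref{eq:eqgeom3}) are defined. The lifting lemma I would record first: for every $\ell\geq0$ and every $\xi\in\prolto{\Sigma}{\ell}$ there is a solution $x(.)$ of $\Sigma$ on some interval $I\ni t_0$ with $j^\ell(x(.))(t_0)=\xi$, and if moreover $\ell\geq K$ and $\xi\in\prolto{\ouv}{\ell}$ for an open $\ouv\subset J^K(M)$ one may shrink $I$ so that $j^K(x(.))(t)\in\ouv$ for all $t\in I$. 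This is immediate from the explicit equations (\ref{eq:eqprolong})--(\ref{eq:eqprolf}) in the coordinates of Proposition~\ref{lem-locexpl}: the blocks $x_{\mathrm{I\!I}},\dot x_{\mathrm{I\!I}},\dots,x_{\mathrm{I\!I}}^{(\ell)}$ of $\xi$ are free, so one interpolates them at $t_0$ by a polynomial curve $t\mapsto x_{\mathrm{I\!I}}(t)$ and integrates $\dot x_{\mathrm{I}}=f(x_{\mathrm{I}},x_{\mathrm{I\!I}}(t),\dot x_{\mathrm{I\!I}}(t))$ from the prescribed value of $x_{\mathrm{I}}$; by Proposition~\ref{prop-prol} this $x(.)$ is a solution, and, obeying the same equations (\ref{eq:eqprolong}) that $\xi$ does, it has $j^\ell(x(.))(t_0)=\xi$; openness of $\ouv$ and continuity give the last clause. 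Conversely Proposition~\ref{prop-prol} says the $\ell$-jet of a solution always stays in $\prolto{\Sigma}{\ell}$, so the lemma sweeps out exactly $\prolto{\Sigma}{\ell}$, hence exactly $\prolto{\widetilde{\ouv}}{\ell}=\prolto{\ouv}{\ell}\cap\prolto{\Sigma}{\ell}$ when $\ell\geq K$.

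For \emph{necessity}, given $\Phi,\Psi$ as in Definition~\ref{def-eq} I would set $\widetilde{\Phi}=\Phi|_{\widetilde{\ouv}}$ and $\widetilde{\Psi}=\Psi|_{\widetilde{\ouv}'}$ (smooth, resp.\ \CCC{\omega}, since $\widetilde{\ouv},\widetilde{\ouv}'$ are embedded submanifolds by Proposition~\ref{lem-locexpl}), so that the maps $\widetilde{\Phi}^{[\ell]},\widetilde{\Psi}^{[\ell]}$ of (\ref{eq:Phi[l]}) are precisely the restrictions of $\Phi^{[\ell]},\Psi^{[\ell]}$. Then I would fix an arbitrary $\xi\in\prolto{\widetilde{\ouv}}{K+K'}$, lift it by the lemma to a solution $x(.)$ with $j^{K+K'}(x(.))(t_0)=\xi$ and $j^K(x(.))$ inside $\ouv$ near $t_0$, and put $z(.)=\mathcal{D}_\Phi^K(x(.))$; evaluating (\ref{eq:Phiprol}) at $t_0$ gives $j^\ell(z(.))(t_0)=\Phi^{[\ell]}(j^{K+\ell}(x(.))(t_0))$ for $0\leq\ell\leq K'$. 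By Definition~\ref{def-eq}, $z(.)$ is a solution of $\Sigma'$ whose $K'$-jet stays in $\ouv'$ and $\mathcal{D}_\Psi^{K'}(z(.))=x(.)$; so the case $\ell=1$ yields (\ref{eq:eqgeom1}) for $\widetilde{\Phi}$ (as $\pi_{K+K',K+1}(\xi)$ ranges over $\prolto{\widetilde{\ouv}}{K+1}$), the case $\ell=K'$ yields the first inclusion of (\ref{eq:eqgeom2}) together with $\widetilde{\Phi}^{[K']}(\xi)\in\prolto{\Sigma'}{K'}$, hence $\widetilde{\Phi}^{[K']}(\xi)\in\widetilde{\ouv}'$ so that $\widetilde{\Psi}\circ\widetilde{\Phi}^{[K']}$ is defined, and the identity $\mathcal{D}_\Psi^{K'}(z(.))=x(.)$ read at $t_0$ gives $\widetilde{\Psi}(\widetilde{\Phi}^{[K']}(\xi))=x(t_0)=\pi_{K+K',0}(\xi)$, i.e.\ the first equality of (\ref{eq:eqgeom3}). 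The three statements about $\widetilde{\Psi}$ follow by exchanging the roles of the two systems.

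For \emph{sufficiency}, given $\widetilde{\Phi},\widetilde{\Psi}$ satisfying (\ref{eq:eqgeom1})--(\ref{eq:eqgeom3}), I would first extend $\widetilde{\Phi}$ to a smooth (resp.\ real analytic) map $\Phi$ on $\ouv$ --- or at least on an open neighbourhood of $\widetilde{\ouv}$ inside $\ouv$, which by Remark~\ref{rem-intext} is all that matters --- using that $\widetilde{\ouv}=\ouv\cap\prolto{\Sigma}{K}$ is an embedded submanifold (locally a graph, by Proposition~\ref{lem-locexpl}) and a standard extension theorem; similarly $\Psi$ extends $\widetilde{\Psi}$. Then, for a solution $x(.)$ of $\Sigma$ whose $K$-jet stays in $\ouv$, Proposition~\ref{prop-prol} gives $j^{K+\ell}(x(.))(t)\in\prolto{\widetilde{\ouv}}{K+\ell}$ for every $\ell$, so $\Phi$ agrees with $\widetilde{\Phi}$ along these jets; feeding this and (\ref{eq:Phiprol}) into (\ref{eq:eqgeom1})--(\ref{eq:eqgeom3}) shows in turn that $z(.)=\mathcal{D}_\Phi^K(x(.))$ is a solution of $\Sigma'$, that its $K'$-jet remains in $\ouv'$, and that $\mathcal{D}_\Psi^{K'}(z(.))=x(.)$; this is the first bullet of Definition~\ref{def-eq}, and the second follows by symmetry. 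The step I expect to be the main obstacle is precisely this extension: in the \CCC{\infty} case it is the routine extension of a smooth map off an embedded submanifold, but in the \CCC{\omega} case it requires the real-analytic version of that fact (e.g.\ via complexification of the submanifold), which I would invoke rather than reprove; the rest is bookkeeping of the jet-space domains, which must be done with some care but raises no conceptual difficulty.
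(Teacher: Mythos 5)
Your proof is correct and follows essentially the same route as the paper's: the converse direction rests on the fact that through every point of $\prolto{\widetilde{\ouv}}{K+K'}$ (and of $\prolto{\widetilde{\ouv}}{K+1}$, $\prolto{\widetilde{\ouv}'}{K'+K}$, $\prolto{\widetilde{\ouv}'}{K'+1}$) passes the jet of a solution --- your lifting lemma, which the paper invokes in a single sentence --- followed by evaluation of the prolongation identity (\ref{eq:Phiprol}) at $t_0$, while the direct direction is the same chain of implications from (\ref{eq:eqgeom1})--(\ref{eq:eqgeom3}) to the two bullets of Definition~\ref{def-eq}. The only substantive difference is that you make explicit the extension of $\widetilde{\Phi}$ off the embedded submanifold $\widetilde{\ouv}$ to an open set (and flag the real-analytic case), a point the paper passes over in silence by writing $\mathcal{D}_\Phi^K$ directly and appealing implicitly to Remark~\ref{rem-intext}.
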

\begin{proof}
  If the above conditions on $\Phi$ and $\Psi$ are satisfied, and $x(.):I\to M$ is a solution of $\Sigma$ whose
  $K$\textsuperscript{th} jet remains inside $\ouv$, then the first part of (\ref{eq:eqgeom1}) implies that
  $\mathcal{D}_\Phi^{K}(\,x(.)\,)$ is a solution of $\Sigma'$, the first part of (\ref{eq:eqgeom2}) implies that its
  $K$\textsuperscript{th} jet remains inside $\ouv'$, and the first part of (\ref{eq:eqgeom3}) implies that
  $\mathcal{D}_\Psi^{K'}(\,\mathcal{D}_\Phi^K(\,x(.)\,)\,)\ =\ x(.)$. This proves the first item of Definition~\ref{def-eq}; the
  second item follows in the same way from the second part of (\ref{eq:eqgeom1}), (\ref{eq:eqgeom2}) and (\ref{eq:eqgeom3}).

  Conversely, if $\Phi$ and $\Psi$ satisfy the properties of Definition~\ref{def-eq}, their restrictions $\widetilde\Phi$ and
  $\widetilde\Psi$ to $\widetilde{\ouv}$ and $\widetilde{\ouv}'$ respectively satisfy the above relations
  because through each point in $\prolto{\widetilde{\ouv}}{K+1}$, $\prolto{\widetilde{\ouv'}}{K'+1}$,
  $\prolto{\widetilde{\ouv}}{K+K'}$ or $\prolto{\widetilde{\ouv}}{K+K'}$ passes a jet of order $K+1$, $K'+1$ or $K+K'$ of a
  solution of $\Sigma$ or $\Sigma'$; differentiating yields the required relations.
\end{proof}

\smallskip

\begin{proposition}[Static Equivalence]
  \label{prop-eqst}
  With $\ouv_1\subset J^{1}(M)=\mathrm{T}M$ and $\ouv'_1\subset J^{1}(M')=\mathrm{T}M$ two open subsets, systems $\Sigma$ and
  $\Sigma'$ are \emph{static equivalent over $\ouv_1$ and $\ouv'_1$} if and only if, with $\widetilde{\ouv}_1,\widetilde{\ouv}'_1$
  defined in (\ref{eq:omegtilde}), there exist a smooth diffeomorphism $\Phi_0:\widetilde{\ouv}_0\to\widetilde{\ouv}'_0$, and its
  inverse $\Psi_0$ such that $\widetilde{\Phi}_0^{[1]}(\widetilde{\ouv}_1)=\widetilde{\ouv}'_1$ (and
  $\widetilde{\Psi}_0^{[1]}(\widetilde{\ouv}'_1)=\widetilde{\ouv}_1$).
\end{proposition}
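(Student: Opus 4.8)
The plan is to obtain this as a restatement of the geometric characterisation already at hand, Proposition~\ref{prop-static}(a), once two bookkeeping facts are in place. First I would observe that for a map $\Phi_0$ between open subsets of $M$ and $M'$ --- i.e.\ a differential operator of order $0$ --- the prolongation $\widetilde\Phi_0^{[1]}$ of Section~\ref{sec-diffop} is, by (\ref{eq:Phiprol}) with $k=0$, $r=1$, the restriction to $\Sigma$-jets of the tangent map (push-forward) $(\Phi_0)_\star$; hence the condition $\widetilde\Phi_0^{[1]}(\widetilde\ouv_1)=\widetilde\ouv'_1$ in the statement reads ``$(\Phi_0)_\star$ carries $\ouv_1\cap\Sigma$ onto $\ouv'_1\cap\Sigma'$'', the requirement that $\Psi_0$ be the inverse of $\Phi_0$ making this a genuine bijection onto. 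Second, as in Remark~\ref{rem-intext}, only what the diffeomorphism of Definition~\ref{def-static} does along $\Sigma$ matters: by Definition~\ref{def-sol} a solution whose first jet stays in an open $\ouvTM\subset\mathrm{T}M$ has its first jet in $\ouvTM\cap\Sigma$, hence takes its values in $\pi_{1,0}(\ouvTM\cap\Sigma)$, so the only relevant piece of $\Phi$ is its restriction to $\widetilde\ouv_0=\pi_{1,0}(\ouv_1\cap\Sigma)$, an open subset of $M$ since $\pi|_\Sigma$ is a submersion.

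Granting this, the forward implication is short: starting from a static equivalence over $\ouv_1,\ouv'_1$, I would take the diffeomorphism $\Phi$ of Definition~\ref{def-static}, invoke Proposition~\ref{prop-static}(a) to get $\Phi_\star(\ouv_1\cap\Sigma)=\ouv'_1\cap\Sigma'$, apply $\pi_{1,0}$ (which intertwines $\Phi_\star$ and $\Phi$, cf.\ the line before (\ref{eq:piPhi}) with $k=0,r=1$) to obtain $\Phi(\widetilde\ouv_0)=\widetilde\ouv'_0$, and then restrict: $\Phi_0:=\Phi|_{\widetilde\ouv_0}$ is a diffeomorphism onto $\widetilde\ouv'_0$, $\Psi_0:=\Phi_0^{-1}$ does the symmetric job, and $\widetilde\Phi_0^{[1]}(\widetilde\ouv_1)=\Phi_\star(\ouv_1\cap\Sigma)=\widetilde\ouv'_1$.

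For the converse I would take $\Phi_0,\Psi_0$ as in the statement, so that $(\Phi_0)_\star$ maps $\ouv_1\cap\Sigma$ bijectively onto $\ouv'_1\cap\Sigma'$ with $(\Psi_0)_\star$ as inverse, and then verify (\ref{eq:stateq}) directly: if $x(\cdot):I\to M$ is a solution of $\Sigma$ whose first jet remains in $\ouv_1$, then $j^1x(t)\in\ouv_1\cap\Sigma=\widetilde\ouv_1$ and $x(t)\in\widetilde\ouv_0$ for all $t$, so $\Phi_0\circ x$ is defined and $j^1(\Phi_0\circ x)(t)=(\Phi_0)_\star(j^1x(t))\in\ouv'_1\cap\Sigma'\subset\Sigma'$, i.e.\ $\Phi_0\circ x$ is a solution of $\Sigma'$ whose first jet remains in $\ouv'_1$; the symmetric computation with $\Psi_0$ gives the other half of (\ref{eq:stateq}) and the inversion $\Psi_0\circ\Phi_0=\mathrm{id}$ on solutions. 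This already yields static equivalence over $\ouv_1,\ouv'_1$, up to extending $\Phi_0$ off $\widetilde\ouv_0$ inside $\ouv_0$ to match the letter of Definition~\ref{def-static}, which is immaterial by the ``external'' viewpoint of Remark~\ref{rem-intext}.

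I do not expect a genuine obstacle here: the geometric content is the one-line Proposition~\ref{prop-static}(a), and the only thing that needs care is the bookkeeping between the full projection $\ouv_0$ occurring in Definition~\ref{def-static} and the projection $\widetilde\ouv_0$ of $\ouv_1\cap\Sigma$ used in the present statement --- together with keeping in mind that, in (\ref{eq:Phi[l]}), $\widetilde\Phi_0^{[1]}$ denotes the restriction of the prolongation to $\Sigma$-jets rather than the prolongation itself.
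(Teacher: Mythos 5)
Your proposal is correct and follows exactly the route the paper takes: the paper's entire proof is the one line ``This is a re-phrasing of point (a) of Proposition~\ref{prop-static}'', and your argument simply spells out the bookkeeping (the first prolongation of an order-$0$ operator being the push-forward, and the passage between $\ouv_0$ and $\widetilde\ouv_0$) that this re-phrasing implicitly relies on.
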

\begin{proof}
  This is a re-phrasing of point (a) of Proposition~\ref{prop-static}.
\end{proof}

\subsection{Proof of Theorem~\ref{th-main}}
\label{sec-proof}
Assume that $\Sigma$ and $\Sigma'$ are dynamic equivalent over the open sets $\ouv\subset J^K(M)$ and $\ouv'\subset J^{K'}(M')$;
let $\widetilde{\Phi}:\widetilde{\ouv}\to M'$ and $\widetilde{\Psi}:\widetilde{\ouv}' \to M$ be the smooth maps given by
Proposition~\ref{prop-eq} (recall that $\widetilde{\ouv}$ and $\widetilde{\ouv}'$ are open subsets of $\Sigma_{K}$ and
$\Sigma'_{K'}$). We define open subsets $\widetilde{\ouv}^S\subset\widetilde{\ouv}$ and $\widetilde{\ouv}^{\prime
  S}\subset\widetilde{\ouv}'$ and state four lemmas concerning these~:\vspace{-0.5em}
\begin{gather}
  \label{eq:omegS}
  \begin{array}{ccl}
    \xi\in\widetilde{\ouv}^S&\Leftrightarrow& \mbox{There is a neighborhood $V$ of $\xi$ in $\widetilde{\ouv}$ and a smooth map}
    \\
    && \widetilde{\Phi}_0:V_0\to M'\;\mbox{such that}
    \left.\widetilde{\Phi}\right|_{V} = \widetilde{\Phi}_0\circ\pi_{K,0}\,,
  \end{array}
  \\
  \label{eq:omegS'}
  \begin{array}{ccl}
    \xi'\in\widetilde{\ouv}^{\prime S}&\Leftrightarrow& \mbox{There is a neighborhood $V'$ of $\xi'$ in $\widetilde{\ouv}'$ and a smooth map}
    \\
    && \widetilde{\Psi}_0:V'_0\to M\;\mbox{such that}
    \left.\widetilde{\Psi}\right|_{V'} = \widetilde{\Psi}_0\circ\pi_{K,0}\,.
  \end{array}
\end{gather}
\begin{lemma}
  \label{lem-ana} In the analytic case, and if $\widetilde{\ouv}=\ouv\cap\Sigma$ and $\widetilde{\ouv}'=\ouv'\cap\Sigma'$ are connected, one has either $\widetilde{\ouv}^S=\widetilde{\ouv}$ or
  $\widetilde{\ouv}^S=\varnothing$, and either $\widetilde{\ouv}^{\prime S}=\widetilde{\ouv}'$ or $\widetilde{\ouv}^{\prime
    S}=\varnothing$.
\end{lemma}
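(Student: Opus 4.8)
The plan is to exploit the rigidity of real-analytic maps: $\widetilde{\ouv}^S$ is characterized by the vanishing of the derivatives $\partial\widetilde{\Phi}/\partial x_{\mathrm{I\!I}}^{(j)}$ for $1\le j\le K$ in the coordinates of Proposition~\ref{prop-prol}, and this is an analytic condition, hence defines a relatively closed analytic subset whose complement is also (essentially) governed by the non-vanishing of an analytic function. First I would make the definition (\ref{eq:omegS}) concrete in coordinates: on a neighborhood of a point $\xi\in\widetilde{\ouv}\subset\Sigma_K$ we use the embedded-submanifold coordinates $(x_{\mathrm{I}},x_{\mathrm{I\!I}},\dot x_{\mathrm{I\!I}},\ldots,x_{\mathrm{I\!I}}^{(K)})$ supplied by Proposition~\ref{prop-prol}, in which $\widetilde\Phi$ becomes an honest analytic map of these variables; then $\xi\in\widetilde{\ouv}^S$ if and only if $\widetilde\Phi$ is, near $\xi$, independent of $x_{\mathrm{I\!I}},\dot x_{\mathrm{I\!I}},\ldots,x_{\mathrm{I\!I}}^{(K)}$ — equivalently all partial derivatives of $\widetilde\Phi$ with respect to these ``fibre'' coordinates vanish near $\xi$. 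One subtlety to record here: the coordinate charts on $\Sigma_K$ are only local, so ``independent of the fibre coordinates in one chart'' must be checked to glue into the global, chart-free statement $\left.\widetilde\Phi\right|_V=\widetilde\Phi_0\circ\pi_{K,0}$; this is routine because the transition maps of the bundle $\pi_{K,0}:\Sigma_K\to M$ are fibre-bundle maps.

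Next I would argue that $\widetilde{\ouv}^S$ is relatively closed in $\widetilde{\ouv}$. Let $E\subset\widetilde{\ouv}$ be the set of points where \emph{all} the fibre-partial-derivatives of $\widetilde\Phi$ (of all orders, in every chart) vanish; in each analytic chart this is the common zero set of a family of analytic functions, hence relatively closed, so $E$ is relatively closed in $\widetilde{\ouv}$. The point is that $E=\widetilde{\ouv}^S$: if $\xi\in E$ then by analyticity $\widetilde\Phi$ agrees near $\xi$ with its own Taylor expansion in the fibre variables, whose fibre-dependent terms all vanish, so $\widetilde\Phi$ is locally a function of $x$ alone, giving $\xi\in\widetilde{\ouv}^S$; the reverse inclusion is immediate. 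Now I would show $\widetilde{\ouv}^S$ is also \emph{open}, which is clear directly from definition (\ref{eq:omegS}) since that definition is phrased via the existence of a neighborhood. Thus $\widetilde{\ouv}^S$ is both open and closed in $\widetilde{\ouv}$, and since $\widetilde{\ouv}=\ouv\cap\Sigma$ is assumed connected, $\widetilde{\ouv}^S$ is either empty or all of $\widetilde{\ouv}$. The same argument applied to $\widetilde\Psi$ on the connected set $\widetilde{\ouv}'=\ouv'\cap\Sigma'$ gives the corresponding dichotomy for $\widetilde{\ouv}^{\prime S}$.

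The main obstacle, and the step deserving care, is the identification $E=\widetilde{\ouv}^S$ — specifically the direction $E\subseteq\widetilde{\ouv}^S$. Vanishing of all fibre-partials of $\widetilde\Phi$ at a single point $\xi$ only forces $\widetilde\Phi$ to be fibre-independent \emph{to infinite order at $\xi$}; upgrading this to fibre-independence \emph{on a whole neighborhood} is exactly where real-analyticity is used, via the local convergence of the Taylor series. One must also be a little careful that $\Sigma_K$, as an analytic submanifold of $J^K(M)$ (Proposition~\ref{lem-locexpl} / Proposition~\ref{prop-prol}), carries a genuine real-analytic atlas so that ``analytic function on $\widetilde{\ouv}$'' has its usual meaning and the identity theorem applies; this is why the statement of Lemma~\ref{lem-ana} restricts to the analytic case. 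A minor additional point is that one should phrase ``fibre coordinate'' invariantly — e.g.\ as a tangent vector to $\Sigma_K$ annihilated by $d\pi_{K,0}$ — so that the set $E$ does not depend on the chart; with that in hand the gluing in the first paragraph and the open-closed argument go through without further difficulty.
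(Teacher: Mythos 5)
Your proof is correct and follows essentially the same route as the paper: both arguments rest on the real-analytic rigidity of the vertical (fibre-direction) derivatives of $\widetilde{\Phi}$ on the connected set $\widetilde{\ouv}$ --- the paper invokes the identity theorem directly (vanishing on an open subset forces vanishing everywhere), while you unpack that same fact into the standard open-and-closed argument via the set $E$ where all fibre-partials vanish. The extra care you take in identifying $E$ with $\widetilde{\ouv}^S$ (using the Taylor expansion to pass from infinite-order vanishing at a point to fibre-independence on a neighborhood) is exactly the content of the paper's one-line appeal to analyticity, so no genuinely new ingredient is introduced.
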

\begin{lemma}
  \label{lem-biz} One has the following identities, where the two first ones hold for any subsets $S\subset\widetilde{\ouv}$,
  $S'\subset\widetilde{\ouv}'$ and any integer $\ell$, $0\leq\ell\leq K+K'$,  \vspace{-0.5em}
\begin{gather}
    \label{eq:biz}
    \pi_{K+K',\ell}\left( {\kern0pt \widetilde{\Phi}^{[K']}}^{-1} (S')\right)=\widetilde{\Psi}^{[\ell]}\left(S'_{K'+\ell}\right),\ \;
    \pi_{K+K',\ell}\left( {\kern0pt \widetilde{\Psi}^{[K]}}^{-1} (S)\right)=\widetilde{\Psi}^{[\ell]}\left(S_{K+\ell}\right),
\\
  \label{eq:surjec}
  \vspace{-0.4em}\widetilde{\Phi}^{[1]}(\widetilde{\ouv}_{K+1})=\widetilde{\ouv}'_{1}\,,\ \ \ \ 
  \widetilde{\Psi}^{[1]}(\widetilde{\ouv}'_{K'+1})=\widetilde{\ouv}_{1}\,.
  \end{gather}
\end{lemma}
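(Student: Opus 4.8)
The plan is to reduce both identities of Lemma~\ref{lem-biz} to bookkeeping with jets of genuine solutions; the two facts that make this possible are the following. First, by Proposition~\ref{prop-prol} and the coordinate description~(\ref{eq:eqprolong}), every point of $\prolto{\Sigma}{k}$ (resp.\ of $\prolto{\Sigma'}{k}$) is $j^{k}(x(.))(t_0)$ for some solution $x(.)$ of $\Sigma$ (resp.\ $\Sigma'$) and some $t_0$, and by openness the interval of definition may be shrunk around $t_0$ so that $j^{K}(x(.))$ (resp.\ $j^{K'}$) stays inside any prescribed open set containing the $K$\textsuperscript{th} (resp.\ $K'$\textsuperscript{th}) truncation of that point. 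Second, the defining properties in Definition~\ref{def-eq}: $\mathcal{D}_\Phi^{K}$ carries a solution of $\Sigma$ whose $K$\textsuperscript{th} jet stays in $\ouv$ to a solution of $\Sigma'$ whose $K'$\textsuperscript{th} jet stays in $\ouv'$, $\mathcal{D}_\Psi^{K'}$ does the reverse, and $\mathcal{D}_\Psi^{K'}\circ\mathcal{D}_\Phi^{K}$, $\mathcal{D}_\Phi^{K}\circ\mathcal{D}_\Psi^{K'}$ are the identity on such solutions. These are used together with the elementary prolongation identity $j^{\ell}\circ\mathcal{D}_\Phi^{K}=\widetilde{\Phi}^{[\ell]}\circ j^{K+\ell}$ on $\prolto{\widetilde{\ouv}}{K+\ell}$, which is~(\ref{eq:Phiprol}) under the restriction convention~(\ref{eq:Phi[l]})--(\ref{eq:omegtildeprol}), and its analogue for $\Psi$.

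I would prove the first identity of~(\ref{eq:biz}) by double inclusion; the second one then follows by exchanging the roles of $(\Sigma,\Phi,K,M)$ and $(\Sigma',\Psi,K',M')$. For ``$\subseteq$'', given $\zeta\in(\widetilde{\Phi}^{[K']})^{-1}(S')\subset\prolto{\widetilde{\ouv}}{K+K'}$, write $\zeta=j^{K+K'}(x(.))(t_0)$ with $x(.)$ a solution of $\Sigma$ and, after shrinking the interval, $j^{K}(x(.))$ in $\ouv$; then $z(.):=\mathcal{D}_\Phi^{K}(x(.))$ is a solution of $\Sigma'$ with $j^{K'}(z(.))$ in $\ouv'$ and $\mathcal{D}_\Psi^{K'}(z(.))=x(.)$. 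Since $\widetilde{\Phi}^{[K']}(\zeta)=j^{K'}(z(.))(t_0)\in S'$, the point $j^{K'+\ell}(z(.))(t_0)$ projects into $S'$, hence lies in $S'_{K'+\ell}$, and it also lies in $\prolto{\widetilde{\ouv}'}{K'+\ell}$ because it is a jet of a solution of $\Sigma'$ whose $K'$\textsuperscript{th} jet is in $\ouv'$; thus it is in the domain of $\widetilde{\Psi}^{[\ell]}$, and $\pi_{K+K',\ell}(\zeta)=j^{\ell}(x(.))(t_0)=\widetilde{\Psi}^{[\ell]}\bigl(j^{K'+\ell}(z(.))(t_0)\bigr)$ lies in $\widetilde{\Psi}^{[\ell]}(S'_{K'+\ell})$. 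For ``$\supseteq$'', take $\eta=\widetilde{\Psi}^{[\ell]}(\mu)$ with $\mu\in S'_{K'+\ell}$ in the domain of $\widetilde{\Psi}^{[\ell]}$, write $\mu=j^{K'+\ell}(z(.))(t_0)$ with $z(.)$ a solution of $\Sigma'$ and (after shrinking) $j^{K'}(z(.))$ in $\ouv'$, set $x(.):=\mathcal{D}_\Psi^{K'}(z(.))$ and $\zeta:=j^{K+K'}(x(.))(t_0)$; then $\zeta\in\prolto{\widetilde{\ouv}}{K+K'}$ because $x(.)$ is a solution of $\Sigma$ with $K$\textsuperscript{th} jet in $\ouv$, $\widetilde{\Phi}^{[K']}(\zeta)=j^{K'}(z(.))(t_0)=\pi_{K'+\ell,K'}(\mu)\in S'$ so $\zeta\in(\widetilde{\Phi}^{[K']})^{-1}(S')$, and $\pi_{K+K',\ell}(\zeta)=j^{\ell}(x(.))(t_0)=\widetilde{\Psi}^{[\ell]}(\mu)=\eta$. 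The argument is uniform in $\ell$ over $0\leq\ell\leq K+K'$, which is precisely the range for which both sides are defined.

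For~(\ref{eq:surjec}) I would argue directly in the same spirit. The inclusion $\widetilde{\Phi}^{[1]}(\prolto{\widetilde{\ouv}}{K+1})\subset\widetilde{\ouv}'_1$ is clear: by~(\ref{eq:eqgeom1}) the image lies in $\Sigma'$, and for $\zeta=j^{K+1}(x(.))(t_0)$ one has $\widetilde{\Phi}^{[1]}(\zeta)=\pi_{K',1}\bigl(j^{K'}(z(.))(t_0)\bigr)$ with $z(.)=\mathcal{D}_\Phi^{K}(x(.))$ a solution of $\Sigma'$ whose $K'$\textsuperscript{th} jet (after shrinking) lies in $\ouv'$, so $\widetilde{\Phi}^{[1]}(\zeta)\in\pi_{K',1}(\ouv')=\ouv'_1$. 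The reverse inclusion is where hypothesis~(\ref{eq:condOmega}) enters, and is exactly its purpose (see also Remark~\ref{rem-goodOmeg}): it says precisely that $\widetilde{\ouv}'_1=\ouv'_1\cap\Sigma'$, so any $\eta$ there lifts to some $\mu\in\ouv'\cap\prolto{\Sigma'}{K'}=\widetilde{\ouv}'$ with $\pi_{K',1}(\mu)=\eta$; writing $\mu=j^{K'}(z(.))(t_0)$ with $z(.)$ a solution of $\Sigma'$ and (after shrinking) $j^{K'}(z(.))$ in $\ouv'$, setting $x(.)=\mathcal{D}_\Psi^{K'}(z(.))$ and $\zeta=j^{K+1}(x(.))(t_0)\in\prolto{\widetilde{\ouv}}{K+1}$ gives $\widetilde{\Phi}^{[1]}(\zeta)=j^{1}(z(.))(t_0)=\pi_{K',1}(\mu)=\eta$; the other identity of~(\ref{eq:surjec}) is the same with $\Phi,\Psi$ interchanged. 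The one genuine difficulty throughout is the domain bookkeeping -- keeping each jet produced inside the correct (shrunk, and preimage-versus-image) domains $\prolto{\widetilde{\ouv}}{K+\ell}$, $\prolto{\widetilde{\ouv}'}{K'+\ell}$ of the restricted prolongations -- handled by the ``shrink the interval'' device together with~(\ref{eq:condOmega}) for surjectivity onto all of $\widetilde{\ouv}'_1$; everything else is formal.
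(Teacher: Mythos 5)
Your proof is correct and follows essentially the same route as the paper: both arguments ultimately rest on the commutation of prolongation with projection, (\ref{eq:piPhi}), and on the prolonged inversion identities $\widetilde{\Psi}^{[\ell]}\circ\widetilde{\Phi}^{[K'+\ell]}=\pi_{K+K'+\ell,\ell}$ coming from (\ref{eq:eqgeom3}), together with condition (\ref{eq:condOmega}) to identify $\widetilde{\ouv}_1$ with $\ouv_1\cap\Sigma$. The only difference is presentational: the paper records these identities once as (\ref{eq:pr2}) and then manipulates images and preimages formally, whereas you re-derive them pointwise by realizing each point of $\Sigma_{k}$ as the $k$-jet of an actual solution and invoking Definition~\ref{def-eq} directly -- a more pedestrian but equally valid unwinding, which also quietly (and correctly) reads the second identity of (\ref{eq:biz}) with $\widetilde{\Phi}^{[\ell]}$ on the right-hand side, as symmetry requires.
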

 \vspace{-0.8em}
\begin{lemma}
  \label{lem-OmegaS} \textbf{\boldmath If $n<n'$,} then $\widetilde{\ouv}^S=\varnothing$. \hspace{2em}
\textbf{\boldmath If $n>n'$,} then
  $\widetilde{\ouv}^{\prime S}=\varnothing$. 

  \textbf{\boldmath If $n=n'$,} there is, for all $\xi_K\in\widetilde{\ouv}^S$, a neighborhood $\mathcal{\ouvxiProofLem}_1$ of
  $\xi_1=\pi_{K,1}(\xi_K)$ in $\ouv_1$ and an open subset $\mathcal{\ouvxiProofLem}'_1$ of $\ouv_1'$ such that systems $\Sigma$
  and $\Sigma'$ are static equivalent over $\mathcal{\ouvxiProofLem}_1$ and $\mathcal{\ouvxiProofLem}'_1$.
  There is also, for all
  $\xi'_{K'}\in\widetilde{\ouv}^{\prime S}$, a neighborhood $\mathcal{\ouvxiProofLemB}'$ of $\xi'_1=\pi_{K',1}(\xi'_{K'})$ in $\ouv'_1$ and an open subset $\mathcal{\ouvxiProofLemB}_1$ of $\ouv_1$ such that systems $\Sigma$ 
  and $\Sigma'$ are static equivalent over $\mathcal{\ouvxiProofLemB}_1$ and $\mathcal{\ouvxiProofLemB}'_1$.
Finally,
\begin{align}
  \label{eq:Sfull}
  \ \pi_{K+K',K'}\left( {\kern0pt \widetilde{\Psi}^{[K]} }^{-1} \left(\widetilde{\ouv}^S\right) \right)
  = \widetilde{\Phi}^{[K']} \left( \widetilde{\ouv}^S_{K+K'}\right)
  & \ =\  \widetilde{\ouv}^{\prime S}\,,
\\
  \label{eq:Sfull'}
  \ \pi_{K+K',K}\left( {\kern0pt \widetilde{\Phi}^{[K']} }^{-1}\left(\widetilde{\ouv}^{\prime S}\right) \right)
  = \widetilde{\Psi}^{[K]} \left(\widetilde{\ouv}^{\prime S}_{K'+K}\right)
  & \ =\  \widetilde{\ouv}^{S}\,.
\end{align}
\end{lemma}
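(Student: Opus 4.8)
The plan is to work on a neighbourhood of a point $\xi_K\in\widetilde{\ouv}^S$, reduce $\widetilde{\Phi}$ there to the map $\widetilde{\Phi}_0$ of (\ref{eq:omegS}), and use the inversion identities (\ref{eq:eqgeom3}) to force $\widetilde{\Phi}_0$ to have rank $n'$. Since its rank is at most $n$, this is impossible when $n<n'$, so $\widetilde{\ouv}^S=\varnothing$; when $n=n'$ it makes $\widetilde{\Phi}_0$ a local diffeomorphism, and static equivalence then follows from Proposition~\ref{prop-static}(a). The statements about $\widetilde{\ouv}^{\prime S}$, and in particular the case $n>n'$, are obtained by exchanging the roles of $(\Sigma,\Phi)$ and $(\Sigma',\Psi)$.

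Concretely, fix $\xi_K\in\widetilde{\ouv}^S$, let $V$ and $\widetilde{\Phi}_0:V_0\to M'$ be as in (\ref{eq:omegS}), and shrink $V$ so that moreover $\widetilde{\Phi}_0$ has constant rank $r$ on $V_0$ (just restrict $\widetilde{\Phi}_0$). Choose a lift $\xi_{K+K'}\in\prolto{\widetilde{\ouv}}{K+K'}$ of $\xi_K$ and set $\xi'=\widetilde{\Phi}^{[K']}(\xi_{K+K'})\in\widetilde{\ouv}'$ (it lies there by (\ref{eq:eqgeom1})--(\ref{eq:eqgeom2})), with base point $\xi'_0$ and first jet $\xi'_1$. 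Two observations follow from $\widetilde{\Phi}|_V=\widetilde{\Phi}_0\circ\pi_{K,0}$ together with (\ref{eq:eqgeom3}) and (\ref{eq:piPhi}): \emph{(i)} if $x(\cdot)$ is a solution of $\Sigma$ whose $K$\textsuperscript{th} jet stays in $V$, then $\mathcal{D}_\Phi^K(x(\cdot))=\widetilde{\Phi}_0\circ x(\cdot)$, which is a solution of $\Sigma'$ (Definition~\ref{def-eq}); \emph{(ii)} conversely, for $(p',v')\in\Sigma'$ close enough to $\xi'_1$, take a solution $z(\cdot)$ of $\Sigma'$ with that initial state; then $x(\cdot)=\mathcal{D}_\Psi^{K'}(z(\cdot))$ is a solution of $\Sigma$ whose $K$\textsuperscript{th} jet starts near $\xi_K$, hence stays in $V$ on a short interval, and $\widetilde{\Phi}\circ\widetilde{\Psi}^{[K]}=\pi_{K+K',0}$ yields $\widetilde{\Phi}_0\circ x(\cdot)=z(\cdot)$ there. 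By \emph{(ii)}, and since such $z(\cdot)$ sweep a whole neighbourhood of $\xi'_0$ in $M'$, the image $\widetilde{\Phi}_0(V_0)$ --- which has dimension $r$ --- contains a neighbourhood of $\xi'_0$; hence $r\ge n'$, and as $r\le n$ we get $r=n'\le n$.

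If $n<n'$ this is absurd, so $\widetilde{\ouv}^S=\varnothing$; exchanging the two systems gives $\widetilde{\ouv}^{\prime S}=\varnothing$ when $n>n'$. If $n=n'$, then $r=n$, so after a further shrinking $\widetilde{\Phi}_0$ is a diffeomorphism $\Phi_0:V_0\to V_0'$ onto an open subset of $M'$. Using \emph{(i)} and the fact that every $(p,v)\in\Sigma$ close to $\xi_1$ is the first jet of a solution whose $K$\textsuperscript{th} jet can be kept in $V$ on a short interval (prescribe the higher-order jet data, the ``controls'' $\ddot{x}_{\mathrm{I\!I}},\dots$ of Proposition~\ref{prop-prol}, freely at the initial instant and use continuity), differentiation of $\widetilde{\Phi}_0\circ x(\cdot)\in\Sigma'$ shows that $(\Phi_0)_\star$ maps a neighbourhood of $\xi_1$ in $\Sigma$ into $\Sigma'$; the same argument applied to $\Psi$ shows $(\Phi_0^{-1})_\star$ maps a neighbourhood of $\xi'_1$ in $\Sigma'$ into $\Sigma$. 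Proposition~\ref{prop-static}(a) (equivalently Proposition~\ref{prop-eqst}) then produces an open $\mathcal{\ouvxiProofLem}_1\ni\xi_1$ in $\ouv_1$ and $\mathcal{\ouvxiProofLem}'_1=(\Phi_0)_\star(\mathcal{\ouvxiProofLem}_1)$ in $\ouv'_1$ over which $\Sigma$ and $\Sigma'$ are static equivalent; the corresponding statement at $\xi'_{K'}\in\widetilde{\ouv}^{\prime S}$ is symmetric. Finally, in each of (\ref{eq:Sfull}) and (\ref{eq:Sfull'}) the first equality is equation~(\ref{eq:biz}) of Lemma~\ref{lem-biz} (with $S=\widetilde{\ouv}^S$, $\ell=K'$, resp. $S'=\widetilde{\ouv}^{\prime S}$, $\ell=K$), and the remaining content is the set equality $\widetilde{\Phi}^{[K']}(\widetilde{\ouv}^S_{K+K'})=\widetilde{\ouv}^{\prime S}$ (resp. its mirror); for the inclusion ``$\subseteq$'' one transports the factorization along $\widetilde{\Phi}^{[K']}$, observing that $\widetilde{\Phi}=\widetilde{\Phi}_0\circ\pi_{K,0}$ near $\xi_K$ forces $\widetilde{\Psi}=\Phi_0^{-1}\circ\pi_{K',0}$ near $\widetilde{\Phi}^{[K']}(\xi_{K+K'})$ (again via $\widetilde{\Phi}\circ\widetilde{\Psi}^{[K]}=\pi_{K+K',0}$ and (\ref{eq:piPhi})), so the image point belongs to $\widetilde{\ouv}^{\prime S}$; ``$\supseteq$'' is the same with the systems exchanged.

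I expect the main obstacle to be the bookkeeping of which neighbourhood lives in which jet space, and especially the verification embedded in \emph{(i)}--\emph{(ii)} that the admissible solutions --- those of $\Sigma$ whose $K$\textsuperscript{th} jet remains in $V$ --- are ``enough'': that their initial first jets fill a neighbourhood of $\xi_1$ in $\Sigma$ and that their $\widetilde{\Phi}_0$-images fill a neighbourhood of $\xi'_0$ in $M'$. This rests on the freedom to prescribe the higher-order jet data at one instant plus a short-time continuity argument, and on making all the successive shrinkings compatible; the dimension count itself ($r\le n$, $r=n'$) is then immediate.
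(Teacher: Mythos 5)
Your overall strategy is the same as the paper's: restrict to a neighbourhood on which $\widetilde{\Phi}=\widetilde{\Phi}_0\circ\pi_{K,0}$, use the inversion identity (\ref{eq:eqgeom3}) to force $\widetilde{\Phi}_0$ to have rank $n'$, deduce $\widetilde{\ouv}^S=\varnothing$ when $n<n'$ and a local diffeomorphism when $n=n'$, and obtain (\ref{eq:Sfull})--(\ref{eq:Sfull'}) from (\ref{eq:biz}) by transporting the factorization. Your observations \emph{(i)}--\emph{(ii)} are essentially the solution-level reading of the identities the paper writes directly between jet manifolds, namely $\widetilde{\Phi}_0\circ\widetilde{\Psi}=\pi_{K',0}$ on $\ouvxiProofLem'$ and its first prolongation (\ref{eq:pr12})--(\ref{eq:pr13}).

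There is, however, one step that does not go through as written: ``shrink $V$ so that $\widetilde{\Phi}_0$ has constant rank $r$ on $V_0$ (just restrict $\widetilde{\Phi}_0$)''. The rank of a smooth map is only lower semicontinuous; it cannot in general be made constant on a neighbourhood of a \emph{prescribed} point $\xi_0$ by restriction (think of $x\mapsto x^2$ at the origin). Since the lemma asserts the static-equivalence conclusion at \emph{every} $\xi_K\in\widetilde{\ouv}^S$, and your route to ``$\widetilde{\Phi}_0$ is a local diffeomorphism at $\xi_0$'' passes through ``constant rank $r$ plus image containing an open set $\Rightarrow r=n'$'', the case $n=n'$ is only established at points where the rank happens to be locally constant --- an open dense subset, but not all of $\widetilde{\ouv}^S$. (The cases $n<n'$ and $n>n'$ survive without this device: a smooth map from an $n$-manifold into an $n'$-manifold with $n<n'$ has image of measure zero, so it cannot cover a neighbourhood of $\xi'_0$.) The repair is exactly what the paper does, and it is already latent in your observation \emph{(ii)}: substituting $\widetilde{\Phi}=\widetilde{\Phi}_0\circ\pi_{K,0}$ into $\widetilde{\Phi}\circ\widetilde{\Psi}^{[K]}=\pi_{K+K',0}$ yields the \emph{identity of smooth maps} $\widetilde{\Phi}_0\circ\widetilde{\Psi}=\pi_{K',0}$ on the open set $\ouvxiProofLem'=\widetilde{\Phi}^{[K']}(\ouvxiProofLem_{K+K'})$, whose image under $\widetilde{\Psi}$ is all of $\ouvxiProofLem_0$ (this surjectivity, cf. (\ref{eq:pr11}), follows from (\ref{eq:piPhi}) and needs to be recorded); differentiating that identity at a preimage of $\xi_0$ bounds the rank of $\widetilde{\Phi}_0$ below by $n'$ \emph{at $\xi_0$ itself}, with no genericity assumption, after which your inverse-function-theorem step and the rest of the argument (including the derivation of $\widetilde{\Psi}=\widetilde{\Psi}_0\circ\pi_{K',0}$ on $\ouvxiProofLem'$ and the two inclusions giving (\ref{eq:Sfull})--(\ref{eq:Sfull'})) proceed as in the paper.
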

\begin{lemma}
  \label{lem-prerul}
  For all $\xi_{K+1}\in\widetilde{\ouv}_{K+1}$ such that
  $\xi_K=\pi_{K+1,K}(\xi_{K+1})\in\widetilde{\ouv}\setminus\widetilde{\ouv}^S$, there is a straight line in
  $\mathrm{T}_{\widetilde{\Phi}(\xi_K)}M'$ that has contact of infinite order with $\Sigma'$ at
  $\widetilde{\Phi}^{[1]}(\xi_{K+1})$.
\end{lemma}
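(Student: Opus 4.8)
The plan is to work in the explicit local coordinates of Proposition~\ref{lem-locexpl} and mimic the sketch of Section~\ref{sec-sketch}, but carefully at the point $\xi_K\notin\widetilde{\ouv}^S$. Around $\xi_1=\pi_{K,1}(\xi_K)\in\Sigma$ choose block coordinates $(x_{\mathrm I},x_{\mathrm{I\!I}})$ bringing $\Sigma$ to the form $\dot x_{\mathrm I}=f(x_{\mathrm I},x_{\mathrm{I\!I}},\dot x_{\mathrm{I\!I}})$, and around $\widetilde\Phi(\xi_K)\in M'$ choose block coordinates $(z_{\mathrm I},z_{\mathrm{I\!I}})$ bringing $\Sigma'$ to $\dot z_{\mathrm I}=g(z_{\mathrm I},z_{\mathrm{I\!I}},\dot z_{\mathrm{I\!I}})$. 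Since $\widetilde{\ouv}\subset\prolto{\Sigma}{K}$ is a graph over the variables $(x_{\mathrm I},x_{\mathrm{I\!I}},\dot x_{\mathrm{I\!I}},\ldots,x_{\mathrm{I\!I}}^{(K)})$ (Proposition~\ref{prop-prol}), the map $\widetilde\Phi$ is given in these coordinates by $z_{\mathrm I}=\phi_{\mathrm I}(x_{\mathrm I},x_{\mathrm{I\!I}},\dot x_{\mathrm{I\!I}},\ldots,x_{\mathrm{I\!I}}^{(K)})$, $z_{\mathrm{I\!I}}=\phi_{\mathrm{I\!I}}(\cdots)$, and similarly $\widetilde{\ouv}_{K+1}$ is a graph over the variables $(x_{\mathrm I},x_{\mathrm{I\!I}},\dot x_{\mathrm{I\!I}},\ldots,x_{\mathrm{I\!I}}^{(K+1)})$. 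The first inclusion in (\ref{eq:eqgeom1}), namely $\widetilde\Phi^{[1]}(\prolto{\widetilde{\ouv}}{K+1})\subset\Sigma'$, becomes precisely the identity displayed in Section~\ref{sec-sketch}: evaluating $\dot z_{\mathrm I}-g(z,\dot z_{\mathrm{I\!I}})=0$ along jets of solutions forces, for all admissible $(x_{\mathrm I},\ldots,x_{\mathrm{I\!I}}^{(K+1)})$,
\begin{displaymath}
  \frac{\partial\phi_{\mathrm I}}{\partial x_{\mathrm I}}f+\frac{\partial\phi_{\mathrm I}}{\partial x_{\mathrm{I\!I}}}\dot x_{\mathrm{I\!I}}+\cdots+\frac{\partial\phi_{\mathrm I}}{\partial x_{\mathrm{I\!I}}^{(K)}}x_{\mathrm{I\!I}}^{(K+1)}
  = g\Bigl(\phi_{\mathrm I},\phi_{\mathrm{I\!I}},\;\frac{\partial\phi_{\mathrm{I\!I}}}{\partial x_{\mathrm I}}f+\cdots+\frac{\partial\phi_{\mathrm{I\!I}}}{\partial x_{\mathrm{I\!I}}^{(K)}}x_{\mathrm{I\!I}}^{(K+1)}\Bigr).
\end{displaymath}

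Next I would extract the ruling direction. Fix the base variables $(x_{\mathrm I},x_{\mathrm{I\!I}},\dot x_{\mathrm{I\!I}},\ldots,x_{\mathrm{I\!I}}^{(K)})$ to be the coordinates of $\xi_K$, so $\phi_{\mathrm I},\phi_{\mathrm{I\!I}}$ and all their partials are frozen, and let $\xi_{K+1}$ supply the value of $x_{\mathrm{I\!I}}^{(K+1)}$, call it $v_0$. The free variable in the identity above is now $v:=x_{\mathrm{I\!I}}^{(K+1)}\in\RR^m$ (ranging in an open set containing $v_0$, by the graph description of $\widetilde{\ouv}_{K+1}$). The point $\widetilde\Phi^{[1]}(\xi_{K+1})\in\Sigma'$ has $z$-part $(\phi_{\mathrm I},\phi_{\mathrm{I\!I}})$ and $\dot z_{\mathrm{I\!I}}$-part $b_0:=\frac{\partial\phi_{\mathrm{I\!I}}}{\partial x_{\mathrm I}}f+\cdots+\frac{\partial\phi_{\mathrm{I\!I}}}{\partial x_{\mathrm{I\!I}}^{(K)}}v_0$. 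As $v$ varies, the $\dot z_{\mathrm{I\!I}}$-part traces the affine-linear path $b(v)=b_0+A\,(v-v_0)$ with $A=\frac{\partial\phi_{\mathrm{I\!I}}}{\partial x_{\mathrm{I\!I}}^{(K)}}$, while the identity shows the corresponding $\dot z_{\mathrm I}$-part is an affine-linear function of $v$ too. Hence the image of $v\mapsto\widetilde\Phi^{[1]}$ restricted to these frozen base variables is an affine-linear image of an open subset of $\RR^m$ lying entirely inside $\Sigma'_z$. The crucial claim is that $A\neq0$: if $A=0$ then, because $\xi_K\notin\widetilde{\ouv}^S$, one can also conclude $\frac{\partial\phi_{\mathrm I}}{\partial x_{\mathrm{I\!I}}^{(K)}}\neq0$ at $\xi_K$ — indeed $A=0$ and $\frac{\partial\phi_{\mathrm I}}{\partial x_{\mathrm{I\!I}}^{(K)}}=0$ together would say $\widetilde\Phi$ is independent of $x_{\mathrm{I\!I}}^{(K)}$ near $\xi_K$, and then (using that $\widetilde{\ouv}_{K+1}$ fibers over lower-order variables and iterating, or simply invoking that the dependence on the top derivative was taken "effective" as in the sketch) one descends to $\widetilde\Phi=\widetilde\Phi_0\circ\pi_{K,0}$ locally, i.e. $\xi_K\in\widetilde{\ouv}^S$, contradiction. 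So either $A\neq0$ — and then $v\mapsto b(v)$ already moves, giving a genuine straight line through $b_0$ in the $\dot z_{\mathrm{I\!I}}$-directions, hence a straight line through $\widetilde\Phi^{[1]}(\xi_{K+1})$ in $\mathrm T_{z}M'$ contained in $\Sigma'_z$ — or $A=0$ but $\frac{\partial\phi_{\mathrm I}}{\partial x_{\mathrm{I\!I}}^{(K)}}\neq0$, in which case the left-hand side of the identity is a nonconstant affine function of $v$ while $g$ is evaluated at a point independent of $v$, which is impossible. Thus $A\neq0$, and we obtain a straight line $L\subset\mathrm T_zM'$ through $\widetilde\Phi^{[1]}(\xi_{K+1})$ with $L\cap(\text{nbhd})\subset\Sigma'_z$.

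Finally I would upgrade "contact on an open subset" to "contact of infinite order", which is what the statement (and Proposition~\ref{prop-landsberg}) requires. Since the segment of $L$ obtained above lies in $\Sigma'_z$ over a full neighborhood of the parameter value corresponding to $\xi_{K+1}$, the line $L$ agrees with $\Sigma'_z$ to all orders at that point: all derivatives of the defining function of $\Sigma'_z$ along $L$ vanish there because the function itself vanishes identically on a neighborhood within $L$. This gives exactly a straight line in $\mathrm T_{\widetilde\Phi(\xi_K)}M'$ having contact of infinite order with $\Sigma'$ at $\widetilde\Phi^{[1]}(\xi_{K+1})$, which is the assertion of the lemma. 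I expect the main obstacle to be the rigorous version of the dichotomy "$A=0\ \Rightarrow\ \xi_K\in\widetilde{\ouv}^S$": one must show that vanishing of the relevant top-order partial of $\widetilde\Phi$ at a single point, combined with the structural identity above, propagates to vanishing on a neighborhood and then lets one factor $\widetilde\Phi$ through $\pi_{K,0}$; handling this cleanly — as opposed to the "generic point" argument in the sketch — at a possibly non-generic $\xi_K$, while only using that $\widetilde{\ouv}\subset\prolto{\Sigma}{K}$ is an embedded submanifold graphed over the lower-order jet variables, is where the care is needed, and is presumably why the authors isolated $\widetilde{\ouv}^S$ as a separate object and proved Lemma~\ref{lem-OmegaS} first.
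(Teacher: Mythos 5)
Your setup, the structural identity, and the final upgrade from ``a segment of $L$ lies in $\Sigma'_{z}$'' to ``contact of infinite order'' are all sound and match the paper. The genuine gap is in your ``crucial claim''. You argue: if $A=\frac{\partial\phi_{\mathrm{I\!I}}}{\partial x_{\mathrm{I\!I}}^{(K)}}$ and $\frac{\partial\phi_{\mathrm I}}{\partial x_{\mathrm{I\!I}}^{(K)}}$ both vanish \emph{at} $\xi_K$, then $\widetilde\Phi$ is independent of $x_{\mathrm{I\!I}}^{(K)}$ \emph{near} $\xi_K$, hence $\xi_K\in\widetilde{\ouv}^S$. That implication is false: $\widetilde{\ouv}^S$ is defined by a neighborhood condition (existence of a local factorization $\widetilde\Phi=\widetilde\Phi_0\circ\pi_{K,0}$), while the vanishing of a derivative at a single point propagates to nothing. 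A map like $\phi=(\dot x_{\mathrm{I\!I}})^2$ has vanishing top-order partial at $\dot x_{\mathrm{I\!I}}=0$ yet does not factor through $\pi_{K,0}$ on any neighborhood; at such a point your construction degenerates ($v\mapsto\chi(v)$ is constant to first order) and produces no line. This is precisely the non-generic situation the lemma must cover, so the dichotomy as stated does not close the proof. There is also a secondary version of the same problem: $\phi$ may be genuinely independent of $x_{\mathrm{I\!I}}^{(K)}$ near $\xi_K$ while depending on some lower derivative, so one must first descend to the minimal effective order rather than work with $K$; your parenthetical ``iterating'' gestures at this but does not carry it out.

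The paper's proof supplies the two missing ingredients. First, it introduces $\bar\rho$, the smallest integer such that $\phi_K$ is independent of $x_{\mathrm{I\!I}}^{(\bar\rho+1)},\ldots,x_{\mathrm{I\!I}}^{(K)}$ on a neighborhood of $\xi_K$; the hypothesis $\xi_K\notin\widetilde{\ouv}^S$ only yields $\bar\rho\geq1$, not nonvanishing of $\frac{\partial\phi}{\partial x_{\mathrm{I\!I}}^{(\bar\rho)}}$ at the given point. Second, minimality of $\bar\rho$ guarantees that $\frac{\partial\phi}{\partial x_{\mathrm{I\!I}}^{(\bar\rho)}}\neq0$ on a \emph{dense} subset of any neighborhood; the argument you gave produces a line $\Delta_{\xi_{K+1}}$ at each such nearby point, and one then takes an accumulation point of this compact family of lines along a sequence $\xi_{K+1}\to\bar\xi_{K+1}$. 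Since contact of infinite order is a closed condition, the limit line has the required contact at $\widetilde{\Phi}^{[1]}(\bar\xi_{K+1})$. Without this density-plus-limit step (or some substitute for it), the proof does not go through at the points where the top effective partial vanishes.
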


\medskip These lemmas will be proved later. Let us finish the proof of the Theorem. 

\smallskip

If $n<n'$, (\ref{eq:surjec}) implies existence, for each $\xi'\in\widetilde{\ouv}'_{1}=\ouv_1\cap\Sigma'$, of some
$\xi_{K+1}\in\widetilde{\ouv}_{K+1}$ such that $\widetilde{\Phi}^{[1]}(\xi_{K+1})=\xi'$ and finally, since $\widetilde{\ouv}^{S}$
is empty according to Lemma~\ref{lem-OmegaS}, Lemma~\ref{lem-prerul} yields a straight line in
$\mathrm{T}_{\xi'_0}M'$ that has contact of infinite order with $\Sigma'$ at $\xi'$; from Proposition~\ref{prop-landsberg}, this
implies that system $\Sigma'$ is ruled over $\ouv_1$.
If $n>n'$, one concludes in the same way.

\smallskip

Now assume {\boldmath$n=n'$}. For all $\xi'$ in
$\widetilde{\Phi}^{[1]}\left((\widetilde{\ouv}\setminus\widetilde{\ouv}^S)_{K+1}\right)$, there is, according to Lemma~\ref{lem-prerul}, a straight line in
$\mathrm{T}_{\xi'_0}M'$ that has contact of infinite order with $\Sigma'$ at $\xi'$. 
By continuity, this is also true for all $\xi'$ in the topological closure
\begin{align}
  \label{eq:R'}
  \widetilde{{R}}'\ &
  =\ \overline{ \widetilde{\Phi}^{[1]} \left( (\widetilde{\ouv}\setminus\widetilde{\ouv}^S)_{K+1}\right)}
  = \overline{ \pi_{K+K',1}\left( {\kern0pt \widetilde{\Psi}^{[K]} }^{-1}
      \left(\widetilde{\ouv}\setminus\widetilde{\ouv}^S\right) \right) }\,,
\end{align}
where the second equality come from (\ref{eq:biz}).
Let $i(\widetilde{{R}}')$ be the interior of $\widetilde{{R}}'$ for the induced topology on $\Sigma'$; since
$\widetilde{{R}}'=\overline{i(\widetilde{{R}}')}$, there is an open subset $\mathcal{R}'$ of
$\ouv_1'\subset\mathrm{T}M'$, enjoying the property that it is the interior of its topological closure, and such that
$\mathcal{R}'\cap\Sigma'=i(\widetilde{{R}}')$ and $\overline{\mathcal{R}'}\cap\Sigma'=\widetilde{{R}}'$. 
From Proposition~\ref{prop-landsberg}, $\Sigma'$ is ruled over $\mathcal{R}'$. Setting
$\mathcal{S}'=\ouv_1'\setminus\overline{\mathcal{R}'}$, one has
$\ouv_1'=\overline{\mathcal{R}'}\cup\mathcal{S}'=\mathcal{R}'\cup\overline{\mathcal{S}'}$. 
Along the same lines, $\Sigma$ is ruled over $\mathcal{R}$, open subset of $\ouv_1\subset\mathrm{T}M$ such that $\mathcal{R}\cap\Sigma$
is the relative interior of 
\begin{align}
  \label{eq:R}\widetilde{{R}}\ & 
  =\ \overline{ \widetilde{\Psi}^{[1]} \left( (\widetilde{\ouv}'\setminus\widetilde{\ouv}^{\prime S})_{K'+1}\right)}
  = \overline{ \pi_{K+K',1}\left( {\kern0pt \widetilde{\Phi}^{[K']} }^{-1}
      \left(\widetilde{\ouv}'\setminus\widetilde{\ouv}^{\prime S}\right) \right) }\,,
\end{align}
and such that $\ouv_1=\overline{\mathcal{R}}\cup\mathcal{S}=\mathcal{R}\cup\overline{\mathcal{S}}$ with
$\mathcal{S}=\ouv_1\setminus\mathcal{R}$.

We have proved (\ref{eq:partition}) and point 1; let us prove point 2. Obviously,
\begin{displaymath}
  \mathcal{S}\cap\Sigma\subset\pi_{K+K',1}\left( {\kern0pt \widetilde{\Phi}^{[K']} }^{-1}\bigl(\widetilde{\ouv}^{\prime S}\bigr)
\right)
\ \ \mbox{and}\ \ 
  \mathcal{S}'\cap\Sigma'\subset\pi_{K+K',1}\left( {\kern0pt \widetilde{\Psi}^{[K]} }^{-1} \bigl(\widetilde{\ouv}^S\bigr) \right)\ .
\end{displaymath}
Using identities (\ref{eq:Sfull}) and (\ref{eq:Sfull'}), this implies
\begin{equation}
  \label{eq:S}
    \mathcal{S}\cap\Sigma\subset\pi_{K,1}\bigl(\,\widetilde{\ouv}^{S}\,\bigr)
\ \ \mbox{and}\ \ 
  \mathcal{S}'\cap\Sigma'\subset  \pi_{K',1}\bigl(\,\widetilde{\ouv}^{\prime S}\,\bigr)\ .
\end{equation}

For all $\xi$ in $\mathcal{S}\cap\Sigma$, there is one $\xi_K\in\widetilde{\ouv}^{S}$ such that $\xi=\pi_{K,1}(\xi_K)$ and, from
Lemma~\ref{lem-OmegaS}, a neighborhood $\mathcal{\ouvxiProofLem}_1^\xi$ of
  $\xi$ in $\ouv_1$ and an open subset $\mathcal{\ouvxiProofLem}^{\prime\,\xi}_1$ of $\ouv_1'$ such that systems $\Sigma$
  and $\Sigma'$ are static equivalent over $\mathcal{\ouvxiProofLem}_1^\xi$ and $\mathcal{\ouvxiProofLem}^{\prime\,\xi}_1$.
For all $\xi'$ in $\mathcal{S}'\cap\Sigma'$, there is one $\xi'_{K'}\in\widetilde{\ouv}'^{S}$ such that $\xi'=\pi_{K',1}(\xi'_{K'})$ and, from
Lemma~\ref{lem-OmegaS}, a neighborhood $\mathcal{\ouvxiProofLemB}^{\prime\,\xi'}$ of $\xi'_1=\pi_{K',1}(\xi'_{K'})$ in $\ouv'_1$ and an open subset $\mathcal{\ouvxiProofLemB}_1^{\xi'}$ of $\ouv_1$ such that systems $\Sigma$ 
  and $\Sigma'$ are static equivalent over $\mathcal{\ouvxiProofLemB}_1^{\xi'}$ and $\mathcal{\ouvxiProofLemB}^{\prime\,\xi}_1$.

Now, $(\mathcal{\ouvxiProofLem}_1^\xi)_{\xi\in\mathcal{S}\cap\Sigma}$ is an open covering of $\mathcal{S}\cap\Sigma$
and
$(\mathcal{\ouvxiProofLemB}_1^{\prime\,\xi'})_{\xi'\in\mathcal{S}'\cap\Sigma'}$ is an open covering of $\mathcal{S}'\cap\Sigma'$.
Take for $(\widetilde{\mathcal{S}}^\alpha)_{\alpha\in A}$ the union of 
$(\mathcal{\ouvxiProofLem}_1^\xi)_{\xi\in\mathcal{S}\cap\Sigma}$ and $(\mathcal{\ouvxiProofLemB}_1^{\xi'})_{\xi'\in\mathcal{S}'\cap\Sigma'}$;
take for $(\widetilde{\mathcal{S}}^{\prime\alpha})_{\alpha\in A}$ the union of
$(\mathcal{\ouvxiProofLem}^{\prime\;\xi}_1)_{\xi\in\mathcal{S}\cap\Sigma}$ and
$(\mathcal{\ouvxiProofLemB}_1^{\prime\,\xi'})_{\xi'\in\mathcal{S}'\cap\Sigma'}$.

This proves the smooth case, and obviously implies the real analytic one from Lemma~\ref{lem-ana}.
\qquad\endproof

\medskip \noindent
Let us now prove the four lemmas used in the above proof.

\smallskip
\textit{Proof of Lemma~\ref{lem-ana}}.  
  If $\widetilde{\ouv}^S\neq\varnothing$, then there is at least an open set in $\widetilde{\ouv}$ derivatives of $\widetilde{\Phi}$
  along any vertical vector field (preserving fibers of $\Sigma_K\to M$) are identically zero; since these are real analytic they
  must be zero all over $\widetilde{\ouv}$, assumed connected, hence $\widetilde{\ouv}^S=\widetilde{\ouv}$. The proof is similar in
  $\widetilde{\ouv}'$. 
\qquad\endproof

\bigskip \textit{Proof of Lemma~\ref{lem-biz}}.  
  The first relation in (\ref{eq:biz}) is a consequence of the two identities
  \begin{equation}
    \label{eq:pr2}
    \pi_{K'+\ell,K'}\circ\widetilde{\Phi}^{[K'+\ell]} =
    \widetilde{\Phi}^{[K']}\circ\pi_{K+K'+\ell,K+K'}\ \ \mbox{and}\ \ 
    \widetilde{\Psi}^{[\ell]}\circ\widetilde{\Phi}^{[K'+\ell]} = \pi_{K+K'+\ell,\ell}\,,
  \end{equation}
  respectively (\ref{eq:piPhi}) with $(r,s)=(K'+\ell,K')$ and the $\ell$\textsuperscript{th} prolongation of
  (\ref{eq:eqgeom3}). The second relation follows from interchanging $K,\Phi,S$ with $K',\Psi,S'$.

  From equations (\ref{eq:eqgeom1}) and (\ref{eq:eqgeom2}), one has, for any positive integer $\ell$,
  \begin{equation}
    \label{eq:pr3}
    \widetilde{\Phi}^{[\ell]}(\widetilde{\ouv}_{K+\ell})\subset\widetilde{\ouv}'_{\ell}\ \ \mbox{and}\ \ 
    \widetilde{\Psi}^{[\ell]}(\widetilde{\ouv}'_{K'+\ell})\subset\widetilde{\ouv}_{\ell}
  \end{equation}
  (for instance, (\ref{eq:eqgeom1}) implies $\widetilde{\Phi}^{[\ell]}(\widetilde{\ouv}_{K+\ell})\subset\Sigma'_\ell$,
  (\ref{eq:eqgeom2}) implies  $\widetilde{\Phi}^{[\ell]}(\widetilde{\ouv}_{K+\ell})\subset\ouv'_{\ell}$, hence the first relation
  above because $\widetilde{\ouv}'_{\ell}\!=\!\ouv'_{\ell}\cap\Sigma'_{\ell}$).
  We only need to prove the reverse inclusions for $\ell=1$. Let us do it for the second one. The second relation in
  (\ref{eq:pr2}) for $\ell=1$ implies $\widetilde{\ouv}_{1}=\widetilde{\Psi}^{[1]}\left(\widetilde{\Phi}^{[K'+1]}
    (\widetilde{\ouv}_{K+K'+1})\right)$, and finally $\widetilde{\ouv}_{1}\subset\widetilde{\Psi}^{[1]}(\widetilde{\ouv}'_{K'+1})$
  from the first relation in (\ref{eq:pr2}) with $\ell=K'+1$.
\qquad\endproof

\bigskip \textit{Proof of Lemma~\ref{lem-OmegaS}}.  
Assume for instance that $\widetilde{\ouv}^S$ is non-empty; then it contains
an open subset $V$ and there is a smooth $\widetilde{\Phi}_0:V_0\to M'$ such that, in restriction to $V$,
$\widetilde{\Phi}=\widetilde{\Phi}_0\circ\pi_{K,0}$. Hence (\ref{eq:eqgeom3}) implies, on the open subset
$V'=\left(\widetilde{\Psi}^{[K]}\right)^{-1}(V)$ of $\prolto{\Sigma'}{K+K'}$,
\begin{equation}
  \label{eq:jbh3}
  \widetilde{\Phi}_0\circ\pi_{K,0}\circ\widetilde{\Psi}^{[K]}=
  \left.
    \pi_{K+K',\,0}\right|_{V'}\,.
\end{equation}
The rank of the map on the left-hand side is $n'$ while the rank of the right-hand side is no larger than $n$ (rank of
$\pi_{K,0}$), hence $\widetilde{\ouv}^S\neq\emptyset$ implies $n'\leq n$.  By interchanging the two systems, this proves the fist
sentence of the Lemma.

\smallskip

Let us now turn to the case where $n=n'$. Consider $\xi_K$ in $\widetilde{\ouv}^S$.  By definition of $\widetilde{\ouv}^S$, there is
a neighborhood $\ouvxiProofLem$ and a smooth (real analytic in the real analytic case) map $\widetilde{\Phi}_0:{\ouvxiProofLem}_0\to M'$ such that
$\widetilde{\Phi}=\widetilde{\Phi}_0\circ\pi_{K,0}$ on $\ouvxiProofLem$. 
Let $\ouvxiProofLem'$ be defined from $\ouvxiProofLem$ as
  \begin{equation}
    \label{eq:W'}
    \ouvxiProofLem'=\pi_{K+K',K'}\left({\kern0pt\widetilde{\Psi}^{[K]}}^{-1}(\ouvxiProofLem)\right) =
    \widetilde{\Phi}^{[K']}(\ouvxiProofLem_{K+K'})\,, 
  \end{equation}
where the second equality comes from (\ref{eq:biz}). 
Applying $\widetilde{\Psi}$ and  $\widetilde{\Psi}^{[1]}$ to both sides of the first equality in (\ref{eq:piPhi}) and using (\ref{eq:W'})
with $(r,s)=(K,0)$ and $(r,s)=(K,1)$ yields 
\begin{equation}
  \label{eq:pr11}
  \widetilde{\Psi}(\ouvxiProofLem')=\ouvxiProofLem_0\,,\ \ \ \ \widetilde{\Psi}^{[1]}(\ouvxiProofLem'_{K'+1})=\ouvxiProofLem_1\,.
\end{equation}
Substituting $\widetilde{\Phi}=\widetilde{\Phi}_0\circ\pi_{K,0}$ in (\ref{eq:eqgeom3}), one has 
$\;\widetilde{\Phi}_0\circ\widetilde{\Psi}\circ\pi_{K+K',K'}=\pi_{K+K',0}$
on ${\kern0pt\widetilde{\Psi}^{[K]}}^{-1}(\ouvxiProofLem)\;$, and finally
\begin{equation}
  \label{eq:pr12}
  \widetilde{\Phi}_0\circ\widetilde{\Psi}=\pi_{K',0}\ \mbox{on}\  \ouvxiProofLem'\,;
\end{equation}
in a similar way, substituting $\widetilde{\Phi}^{[1]}=\widetilde{\Phi}_0^{[1]}\circ\pi_{K+1,1}$ in the first prolongation
of (\ref{eq:eqgeom3}),
\begin{equation}
  \label{eq:pr13}
  \widetilde{\Phi}_0^{[1]}\circ\widetilde{\Psi}^{[1]}=\pi_{K'+1,1}\ \mbox{on}\  \ouvxiProofLem'_{K'+1}\,.
\end{equation}
Applying $\widetilde{\Phi}_0$ to both sides of the first relation and $\widetilde{\Phi}_0^{[1]}$ to both sides of the second
relation in (\ref{eq:pr11}), one has, using (\ref{eq:pr12}) and (\ref{eq:pr13}),
\begin{equation}
  \label{eq:pr14}
  \widetilde{\Phi}_0(\ouvxiProofLem_0)=\ouvxiProofLem'_0\,,\ \ \ \widetilde{\Phi}_0^{[1]}(\ouvxiProofLem_1)=\ouvxiProofLem'_1\,.
\end{equation}
Since the rank of $\pi_{K',0}$ in the right-hand side of (\ref{eq:pr12}) is $n'=n$ at all points of $\ouvxiProofLem'$, 
$\widetilde{\Phi}_0$ must be a local diffeomorphism at all point of $\widetilde{\Psi}(\ouvxiProofLem')=\ouvxiProofLem_0$ and in
particular at $\xi_0$: 
by the inverse function theorem, there is a neighborhood $O$ of $\xi_0=\pi_{K,0}(\xi)$ in $\ouvxiProofLem_0$ and a neighborhood
$O'$ of $\Phi_0(\xi_0)$ in $M'$ such that $\Phi_0$ defines a diffeomorphism $O\to O'$.  

Let us now replace $\ouvxiProofLem$ with $\ouvxiProofLem\cap{\pi_{K,0}}^{-1}(O)$, a smaller neighborhood of $\xi_K$; 
$\ouvxiProofLem'$ is still defined by (\ref{eq:W'}) from this smaller $\ouvxiProofLem$, one has $\ouvxiProofLem_0=O$,
the former $\widetilde{\Phi}_0$ is replaced by its restriction to this smaller $\ouvxiProofLem_0$, and the above relations still
hold.
In particular, $O'=\widetilde{\Phi}_0(O)$ must be all $\ouvxiProofLem'_0$ according to (\ref{eq:pr14}), i.e.
$\widetilde{\Phi}_0$ defines a diffeomorphism
$\ouvxiProofLem_0\to\ouvxiProofLem'_0$; let $\widetilde{\Psi}_0$ be its inverse. 
Composing each side of (\ref{eq:pr12}) with $\widetilde{\Psi}_0$, one gets $\widetilde{\Psi}=\widetilde{\Psi}_0\circ\pi_{K',0}$ on
$\ouvxiProofLem'$; hence, by (\ref{eq:omegS'}), one has $\ouvxiProofLem'\subset\widetilde{\ouv}^{\prime S}$ and, since this is
true for all $\xi_K$ in $\widetilde{\ouv}^S$, one has 
\begin{equation}
  \label{eq:pr15}
  \pi_{K+K',K'}\left({\kern0pt\widetilde{\Psi}^{[K]}}^{-1}(\widetilde{\ouv}^{S})\right) =
    \widetilde{\Phi}^{[K']}(\widetilde{\ouv}^{S}_{K+K'})\subset\widetilde{\ouv}^{\prime S}\,.
\end{equation}
Let $\mathcal{\ouvxiProofLem}_1$ and
$\mathcal{\ouvxiProofLem}'_1$ and be open subsets of $\ouv_1$ and $\ouv'_1$ such that
\begin{equation}
  \label{eq:out}
  \ouvxiProofLem_1=\Sigma\cap\mathcal{\ouvxiProofLem}_1\,,\ \ \ 
\ouvxiProofLem'_1=\Sigma\cap\mathcal{\ouvxiProofLem}'_1\ .
\end{equation}
From Proposition~\ref{prop-eqst}, the second relation in (\ref{eq:pr14}) implies that systems $\Sigma$ and $\Sigma'$ are static
equivalent over $\mathcal{\ouvxiProofLem}_1$ and $\mathcal{\ouvxiProofLem}'_1$.
Interchanging the two systems, one proves that
\begin{equation}
  \label{eq:pr16}
  \pi_{K+K',K}\left({\kern0pt\widetilde{\Phi}^{[K']}}^{-1}(\widetilde{\ouv}^{\prime S})\right) =
    \widetilde{\Psi}^{[K]}(\widetilde{\ouv}^{\prime S}_{K+K'})\subset\widetilde{\ouv}^{S}\,.
\end{equation}
and that, for all $\xi'_{K'}\in\widetilde{\ouv}^{\prime S}$, there are a neighborhood $\mathcal{\ouvxiProofLemB}'$ of
$\xi'_1=\pi_{K',1}(\xi'_{K'})$ in $\ouv'_1$ and an open subset $\mathcal{\ouvxiProofLemB}_1$ of $\ouv_1$ such that systems
$\Sigma$ and $\Sigma'$ are static equivalent over $\mathcal{\ouvxiProofLemB}_1$ and $\mathcal{\ouvxiProofLemB}'_1$.

Now, $\widetilde{\Phi}^{[K']}(\widetilde{\ouv}^{S}_{K+K'})\subset\widetilde{\ouv}^{\prime S}$ in (\ref{eq:pr15}) implies
$\widetilde{\ouv}^{S}_{K+K'}\subset{\kern0pt\widetilde{\Phi}^{[K']}}^{-1}(\widetilde{\ouv}^{\prime S})$, and hence 
$\widetilde{\ouv}^{S}\subset\pi_{K+K',K}\left({\kern0pt\widetilde{\Phi}^{[K']}}^{-1}(\widetilde{\ouv}^{\prime S})\right)$. Hence
(\ref{eq:pr15}) implies the converse inclusion in (\ref{eq:pr16}); in a similar way (\ref{eq:pr16}) implies the converse inclusion
in (\ref{eq:pr15}).  This proves (\ref{eq:Sfull}) and (\ref{eq:Sfull'}), and ends the proof of Lemma~\ref{lem-OmegaS}.
\qquad\endproof

\bigskip \textit{Proof of Lemma~\ref{lem-prerul}}.  Denote by $\bar{\xi}_{K+1}$ the point $\xi_{K+1}$
in the lemma statement and set $\bar{\xi}_K=\pi_{K+1,K}({\bar\xi}_{K+1})\in\widetilde{\ouv}\setminus\widetilde{\ouv}^S$,
$\bar\xi_0=\pi_{K,0}(\bar\xi_{K+1})$, $\bar\xi_1=\pi_{K,1}(\bar\xi_{K+1})$. 
From Proposition~\ref{prop-prol}, and after possibly shrinking $\ouvxiBig_K$ so that it is contained in $\ouv$, there exist a
neighborhood $\ouvxiBig_K\subset\ouv$ of $\bar{\xi}_K$ in $J^K(M)$, coordinates $(x_{\mathrm{I}},x_{\mathrm{I\!I}})$ on
$\ouvxiBig_0=\pi_{K,0}(\ouvxiBig_K)$ inducing coordinates
$(x_{\mathrm{I}},x_{\mathrm{I\!I}},\dot{x}_{\mathrm{I}},\dot{x}_{\mathrm{I\!I}},\ldots,x_{\mathrm{I}}^{(K)},x_{\mathrm{I\!I}}^{(K)})$
on $\ouvxiBig_K$, and an open subset $\ouvgraph_{K}\subset\RR^{n+Km}$ such that the equations of
$\widetilde{\ouvxiBig}_K=\ouvxiBig_K\cap\prolto{\Sigma}{K}$ in $J^K(M)$ in these coordinates are
\begin{equation}
  \label{eq:eqprolong1}
  \begin{array}{l}
    x_{\mathrm{I}}^{(i)}=f^{(i-1)}(x_{\mathrm{I}},x_{\mathrm{I\!I}},\dot{x}_{\mathrm{I\!I}},\ldots,x_{\mathrm{I\!I}}^{(i)})\,,\ \ \
    1\leq i\leq K\,,
    \\
    (x_{\mathrm{I}},x_{\mathrm{I\!I}},\dot{x}_{\mathrm{I\!I}},\ldots,x_{\mathrm{I\!I}}^{(K)})\;\in\;\ouvgraph_K\,.
  \end{array}
\end{equation}
By substitution, there is a unique smooth map $\phi_K:\ouvgraph_K\to M'$ such that
$\widetilde{\Phi}(\xi)=\phi_K(x_{\mathrm{I}},x_{\mathrm{I\!I}},\dot{x}_{\mathrm{I\!I}},\ldots,x_{\mathrm{I\!I}}^{(K)})$ for all
$\xi$ in $\widetilde{\ouvxiBig}_K$ with coordinate vector
$(x_{\mathrm{I}},x_{\mathrm{I\!I}},\ldots,x_{\mathrm{I}}^{(K)},x_{\mathrm{I\!I}}^{(K)})$. 

Let $\overline{X}_i= (\overline{x_{\mathrm{I}}},\overline{x_{\mathrm{I\!I}}},\overline{\dot{x}_{\mathrm{I}}},
\overline{\dot{x}_{\mathrm{I\!I}}}, \ldots,\overline{x_{\mathrm{I}}^{(i)}},\overline{x_{\mathrm{I\!I}}^{(i)}})$ be the
coordinate vector of $\bar{\xi}_i$ for $i\leq K+1$ and $\bar\rho$ the smallest integer such that $\phi_K$ does not depend on 
$x_{\mathrm{I\!I}}^{(\bar\rho+1)},\ldots,x_{\mathrm{I\!I}}^{(K)}$ on at least one neighborhood of $\overline{X}_K$.
Shrinking $\ouvgraph_K$ to this neighborhood, and
$\widetilde{\ouvxiBig}_K$ accordingly, we may define $\phi:\ouvgraph_{\bar\rho}\to M'$, with $\ouvgraph_{\bar\rho}$ the projection
of $\ouvgraph_K$ on $\RR^{n+\bar\rho\,m}$, such that 
$\widetilde{\Phi}(\xi)=\phi_K(x_{\mathrm{I}},x_{\mathrm{I\!I}},\dot{x}_{\mathrm{I\!I}},\ldots,x_{\mathrm{I\!I}}^{(K)})
=\phi(x_{\mathrm{I}},x_{\mathrm{I\!I}},\dot{x}_{\mathrm{I\!I}},\ldots,x_{\mathrm{I\!I}}^{(\bar\rho)})$.
If $\bar\rho$ was zero, one would have $\widetilde{\Phi}(\xi)=\phi(x_{\mathrm{I}},x_{\mathrm{I\!I}})$, hence the right-hand side
of (\ref{eq:omegS}) would be satisfied for $\xi=\bar\xi_K$ with $V=\widetilde{\ouvxiBig}_K$; this is impossible because we assumed
$\bar{\xi}_K\in\widetilde{\ouv}\setminus\widetilde{\ouv}^S$. Hence $\bar\rho\geq1$.

For all $\xi_{K+1}$ in $\widetilde{\ouvxiBig}_{K+1}$ with coordinate vector
$(x_{\mathrm{I}},x_{\mathrm{I\!I}},\ldots,x_{\mathrm{I\!I}}^{(K+1)})$, one has
\begin{equation}
\label{eq:bof3}
  \widetilde{\Phi}^{[1]}(\xi_{K+1})=\chi(x_{\mathrm{I}},x_{\mathrm{I\!I}},\dot{x}_{\mathrm{I\!I}},\ldots,x_{\mathrm{I\!I}}^{(\bar\rho)},x_{\mathrm{I\!I}}^{(\bar\rho+1)})
\end{equation}
with $\chi:\ouvgraph_{\bar\rho+1}\to\mathrm{T}M'$ the map defined by
\begin{align}
  \label{eq:chi}
  \chi(x_{\mathrm{I}}\ldots x_{\mathrm{I\!I}}^{(\bar\rho+1)})=
  \Bigl(\;\phi(x_{\mathrm{I}}  \ldots x_{\mathrm{I\!I}}^{(\bar\rho)})\;,\;
  a(x_{\mathrm{I}}\ldots x_{\mathrm{I\!I}}^{(\bar\rho)}) + \,\frac{\partial\phi}{\partial
    x_{\mathrm{I\!I}}^{(\bar\rho)}}(x_{\mathrm{I}}\ldots x_{\mathrm{I\!I}}^{(\bar\rho)})\;x_{\mathrm{I\!I}}^{(\bar\rho+1)}\;
  \Bigr)
\end{align}
with $a= \frac{\partial\phi}{\partial x_{\mathrm{I}}}f
  +\sum_{i=0}^{\bar\rho-1} \frac{\partial\phi}{\partial
    x_{\mathrm{I\!I}}^{(i)}}\,x_{\mathrm{I\!I}}^{(i+1)}$.
According to (\ref{eq:eqgeom2}), (\ref{eq:eqprolong1}) and (\ref{eq:bof3}), $\Sigma'$ contains
$\chi(\ouvgraph_{\bar\rho+1})$.
Now, for any $(x_{\mathrm{I}},\ldots,x_{\mathrm{I\!I}}^{(\bar\rho+1)})\in\ouvgraph_{\bar\rho+1}$ such that the linear map
\begin{equation*}
  \frac{\partial\phi}{\partial x^{(\rho)}_{\mathrm{I\!I}}}(x_{\mathrm{I}},\ldots, x_{\mathrm{I\!I}}^{(\bar\rho)}):\
  \ \RR^m\;\to\; \mathrm{T}_{\phi(x_{\mathrm{I}},\ldots, x_{\mathrm{I\!I}}^{(\bar\rho)})}M'
\end{equation*}
is nonzero, picking $\underline{w}\neq0$ in its range, (\ref{eq:chi}) implies that the straight line $\Delta$
in $\mathrm{T}_{\phi(x_{\mathrm{I}},\ldots, x_{\mathrm{I\!I}}^{(\bar\rho)})}M'$ passing through $\chi(x_{\mathrm{I}}\ldots
x_{\mathrm{I\!I}}^{(\bar\rho+1)})$ with direction $\underline{w}$ has a segment around $\chi(x_{\mathrm{I}}\ldots
x_{\mathrm{I\!I}}^{(\bar\rho+1)})$ contained in $\Sigma'$, hence in particular $\Delta$ has contact of infinite order with
$\Sigma'$ at point $\chi(x_{\mathrm{I}},\ldots, x_{\mathrm{I\!I}}^{(\bar\rho+1)})$.  
To sum up, we have proved so far that, for all $\xi_{K+1}$ in $\widetilde{\ouvxiBig}_{K+1}$ with coordinate vector
$(x_{\mathrm{I}},x_{\mathrm{I\!I}},\ldots,x_{\mathrm{I\!I}}^{(K+1)})$ such that $\frac{\partial\phi}{\partial
  x^{(\rho)}_{\mathrm{I\!I}}}(x_{\mathrm{I}}\ldots x_{\mathrm{I\!I}}^{(\bar\rho)})$ is nonzero, there is a straight line $\Delta_{\xi_{K+1}}$
in $\mathrm{T}_{\widetilde{\Phi}(\xi_K)}M'$ passing through $\widetilde{\Phi}^{[1]}(\xi_{K+1})$ 
that has contact of infinite order with $\Sigma'$ at $\widetilde{\Phi}^{[1]}(\xi_{K+1})$. The set of such points $\xi_{K+1}$ may
not contain $\bar{\xi}_{K+1}$ but its topological closure does, by minimality of $\bar\rho$; taking a sequence of points
$\xi_{K+1}$ that converges to $\bar{\xi}_{K+1}$, any accumulation point of the compact sequence $\left(\Delta_{\xi_{K+1}}\right)$
is a straight line in $\mathrm{T}_{\widetilde{\Phi}(\bar{\xi}_K)}M'$ passing through $\widetilde{\Phi}^{[1]}(\bar{\xi}_{K+1})$ 
that has contact of infinite order with $\Sigma'$ at $\widetilde{\Phi}^{[1]}(\bar{\xi}_{K+1})$.
\qquad\endproof

\subsection*{Acknowledgements} The authors is indebted to the anonymous reviewer who suggested to include a sketch of proof
(Section~\ref{sec-sketch}) that clarifies the main ideas.

\end{document}